\providecommand{\tabularnewline}{\\}
\numberwithin{equation}{section}
\numberwithin{figure}{section}
\theoremstyle{plain}
\newtheorem{thm}{\protect\theoremname}
\theoremstyle{plain}
\newtheorem{prop}[thm]{\protect\propositionname}
\theoremstyle{plain}
\newtheorem{cor}[thm]{\protect\corollaryname}
\theoremstyle{remark}
\newtheorem*{rem*}{\protect\remarkname}
\theoremstyle{definition}
\newtheorem{example}[thm]{\protect\examplename}
\theoremstyle{remark}
\newtheorem{rem}[thm]{\protect\remarkname}
\providecommand{\corollaryname}{Corollary}
\providecommand{\examplename}{Example}
\providecommand{\propositionname}{Proposition}
\providecommand{\remarkname}{Remark}
\providecommand{\theoremname}{Theorem}
\begin{document}



\title[On the geometry of some K3 surfaces]{On the geometry of  K3 surfaces with finite automorphism group and no elliptic fibrations}


\addtolength{\textwidth}{5mm}
\addtolength{\hoffset}{-5mm} 
\addtolength{\textheight}{12mm}
\addtolength{\voffset}{-5mm} 


\global\long\def\CC{\mathbb{C}}%
 
\global\long\def\BB{\mathbb{B}}%
 
\global\long\def\PP{\mathbb{P}}%
 
\global\long\def\QQ{\mathbb{Q}}%
 
\global\long\def\RR{\mathbb{R}}%
 
\global\long\def\FF{\mathbb{F}}%

\global\long\def\DD{\mathbb{D}}%
 
\global\long\def\NN{\mathbb{N}}%
\global\long\def\ZZ{\mathbb{Z}}%
 
\global\long\def\HH{\mathbb{H}}%
 
\global\long\def\Gal{{\rm Gal}}%
\global\long\def\OO{\mathcal{O}}%
\global\long\def\pP{\mathfrak{p}}%

\global\long\def\pPP{\mathfrak{P}}%
 
\global\long\def\qQ{\mathfrak{q}}%
 
\global\long\def\mm{\mathcal{M}}%
 
\global\long\def\aaa{\mathfrak{a}}%
 
\global\long\def\a{\alpha}%
 
\global\long\def\b{\beta}%
 
\global\long\def\d{\delta}%
 
\global\long\def\D{\Delta}%
 
\global\long\def\L{\Lambda}%
 
\global\long\def\g{\gamma}%

\global\long\def\G{\Gamma}%
 
\global\long\def\d{\delta}%
 
\global\long\def\D{\Delta}%
 
\global\long\def\e{\varepsilon}%
 
\global\long\def\k{\kappa}%
 
\global\long\def\l{\lambda}%
 
\global\long\def\m{\mu}%
 
\global\long\def\o{\omega}%
 
\global\long\def\p{\pi}%
 
\global\long\def\P{\Pi}%
 
\global\long\def\s{\sigma}%

\global\long\def\S{\Sigma}%
 
\global\long\def\t{\theta}%
 
\global\long\def\T{\Theta}%
 
\global\long\def\f{\varphi}%
 
\global\long\def\deg{{\rm deg}}%
 
\global\long\def\det{{\rm det}}%

\global\long\def\Dem{Proof: }%
 
\global\long\def\ker{{\rm Ker\,}}%
 
\global\long\def\im{{\rm Im\,}}%
 
\global\long\def\rk{{\rm rk\,}}%
 
\global\long\def\car{{\rm car}}%
\global\long\def\fix{{\rm Fix( }}%

\global\long\def\card{{\rm Card\  }}%
 
\global\long\def\codim{{\rm codim\,}}%
 
\global\long\def\coker{{\rm Coker\,}}%
 
\global\long\def\mod{{\rm mod }}%

\global\long\def\pgcd{{\rm pgcd}}%
 
\global\long\def\ppcm{{\rm ppcm}}%
 
\global\long\def\la{\langle}%
 
\global\long\def\ra{\rangle}%

\global\long\def\Alb{{\rm Alb(}}%
 
\global\long\def\Jac{{\rm Jac(}}%
 
\global\long\def\Disc{{\rm Disc(}}%
 
\global\long\def\Tr{{\rm Tr(}}%
 
\global\long\def\NS{{\rm NS(}}%
 
\global\long\def\Pic{{\rm Pic(}}%
 
\global\long\def\Pr{{\rm Pr}}%

\global\long\def\Km{{\rm Km(}}%
\global\long\def\rk{{\rm rk(}}%
\global\long\def\Hom{{\rm Hom(}}%
 
\global\long\def\End{{\rm End(}}%
 
\global\long\def\aut{{\rm Aut}}%
 
\global\long\def\SSm{{\rm S}}%
\global\long\def\psl{{\rm PSL}}%
\global\long\def\arccosh{{\rm Arccosh}}%
\global\long\def\cu{{\rm (-2)}}%

\subjclass[2000]{Primary: 14J28}
\author{Xavier Roulleau}
\begin{abstract}
Nikulin \cite{Nikulin1}, \cite{Nikulin2} and Vinberg \cite{Vinberg}
proved that there are only a finite number of lattices of rank $\geq3$
that are the Néron-Severi lattice of projective K3 surfaces with a
finite automorphism group. The aim of this paper is to provide a more
geometric description of such K3 surfaces $X$, when these surfaces
have moreover no elliptic fibrations. In that case we show that such
K3 surface is either a quartic with special hyperplane sections or
a double cover of the plane branched over a smooth sextic curve which
has special tangencies properties with some lines, conics or cuspidal
cubic curves. We then study the converse i.e. if the geometric description
we obtained characterizes these surfaces. In $4$ cases the description
is sufficient, in each of the $4$ other cases there is exactly another
one possibility which we study.  We obtain that at least $5$ moduli
spaces of K3 surfaces (among the $8$ we study) are unirational.
\end{abstract}

\keywords{K3 surfaces, finite automorphism groups, finite number of $(-2)$-curves}

\maketitle

\section{Introduction}

The classification of projective complex K3 surfaces $X$ that have
a finite automorphism group has been established by Pyatetskii-Sapiro
and Shafarevich \cite{PS} for Picard ranks $\rho=\rk\NS X))$ $1$
and $2$, by Vinberg \cite{Vinberg} for rank $4$ and Nikulin \cite{Nikulin1}, \cite{Nikulin2}
for the other ranks. This classification is done according to their
Néron-Severi lattice $\NS X)$, which, according to the Torelli Theorem
(see \cite{PS}) for K3 surfaces, strongly characterizes the surfaces.
However in general for rank $\rho>1$, we do not have a geometric
construction of these surfaces. By geometric construction, we mean
surfaces lying in a projective space or understood as cover of a known
surface or, if there exist a fibration with a section, an explicit
Weierstrass model. \\
By results of Nikulin, a K3 surface $X$ with $\rho>2$ has a finite
number of automorphism if and only if the number of smooth rational
curves (called $(-2)$-curves) is finite and non-zero. According to
\cite{AHL} and independently \cite{McK}, these conditions are equivalent
to requiring the surface $X$ to be a Mori dream space i.e. that its
Cox ring is finitely generated. 

In this paper, we construct geometrically the K3 surfaces $X$ which
have a finite non-zero number of $(-2)$-curves and no elliptic fibrations
(the case with fibrations is treated in \cite{Roulleau}). From the
results of Nikulin \cite{Nikulin2} and Vinberg \cite{Vinberg}, their
Picard number $\rho$ is $3$ or $4$ and there are $8$ cases classified
according to their Néron-Severi lattices, which are denoted $S_{1},\dots,S_{6}$
for rank $\rho=3$ and $L(24),L(27)$ for rank $\rho=4$. 

A conjecture which remained open for a quite long time is about the
presence of infinitely many rational curves on a K3 surface. We observe
that some of the last cases that remained open (and for which now
the conjecture was proved, see e.g. \cite{CGL} for a history and
references) are the cases of K3 surfaces with Picard number 4, no
elliptic fibrations and a finite automorphisms group. These K3 surfaces
are exactly the surfaces we consider in Sections \ref{subsec:Lattice-L24}
and \ref{subsec:Lattice-L27}, and probably the difficulties in establishing
the conjecture in these cases is related to their peculiar geometry.
Also from the point of view of this paper, these are special K3 surfaces
since both of them have a twin (see below for the definition) with
different properties with respect to the presence of $(-2)$-curves.

In \cite{Kondo}, Kondo studies the automorphism group of generic
K3 surfaces with a finite number of $(-2)$-curves; for Picard rank
$\leq8$ this group is either trivial or $\ZZ/2\ZZ$. We obtain: 
\begin{thm}
\label{thm:Main1} Let $X$ be a $K3$ surface of Picard number $>2$
with finite automorphism group and no elliptic fibrations. 
\begin{itemize}
\item The automorphism group of $X$ is trivial if and only $X$ can be
embedded as a smooth quartic in $\PP^{4}$. Then that embedding is
unique and the quartic $X$ is one the following two types:\\
i) Type $S_{2}$ ($\rho=3$). There are three quadrics in $\PP^{3}$
such that each intersects $X\hookrightarrow\PP^{3}$ in the union
of two smooth rational quartic curves. These $6$ rational curves
are the only $(-2)$-curves on $X$.\\
ii) Type $S_{3}$ ($\rho=3$). There exist two hyperplanes sections
such that each hyperplane section is a union of two smooth conics.
These $4$ conics are the only $(-2)$-curves on $X$. 
\item The group $\aut(X)$ is isomorphic to $\ZZ/2\ZZ$ if and only if $X$
is a double cover of the plane branched over a smooth sextic curve
$C_{6}$. In that case the possibilities are:\\
1) Type $S_{1}$ ($\rho=3$). The $6$ $(-2)$-curves on $X$ are
pull-back of $3$ conics that are $6$-tangent to the sextic $C_{6}$.\\
2) Type $S_{4}$ ($\rho=3$). The $4$ $(-2)$-curves on $X$ are
pull-back of a tritangent line and a $6$-tangent conic to $C_{6}$.\\
3) Type $S_{5}$ ($\rho=3$). The $4$ $(-2)$-curves on $X$ are
pull-back of two tritangent lines to $C_{6}$.\\
4) Type $S_{6}$ ($\rho=3$). The $6$ $(-2)$-curves on $X$ are
pull-back of one $6$-tangent conic and two cuspidal cubics that cut
$C_{6}$ tangentially and at their cusps.\\
5) Type $L(24)$ ($\rho=4$). The $6$ $(-2)$-curves on $X$ are
pull-back of three tritangent lines to $C_{6}$.\\
6) Type $L(27)$ ($\rho=4$). The $8$ $(-2)$-curves on $X$ are
pull-back of one tritangent line and three $6$-tangent conics to
$C_{6}$.
\end{itemize}
\end{thm}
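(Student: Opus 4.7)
The plan is to work through the eight types case by case, using Nikulin's and Vinberg's classification, which supplies for each type an explicit Gram matrix of $\NS X)$ together with a finite set of $(-2)$-classes forming a root basis of the chosen Weyl chamber. In each case I first identify the effective $(-2)$-curves (those lying in the given chamber) and tabulate their mutual intersection numbers from the Gram matrix.

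I then look for a nef class $H$ of minimal positive self-intersection in that chamber. Since $X$ carries no elliptic fibration, no nonzero nef class has square $0$ (such a class would induce a genus-one pencil), so $H^{2}\geq 2$. Direct inspection of the eight Gram matrices gives $H^{2}=4$ for types $S_{2}$ and $S_{3}$, and $H^{2}=2$ for the remaining six.

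Next, I apply Saint-Donat's analysis of linear systems on K3 surfaces to identify the polarized model. For $H^{2}=2$ the system $|H|$ has neither fixed components nor base points and defines a degree-$2$ morphism $\varphi_{H}\colon X\to\PP^{2}$ branched along a smooth sextic $C_{6}$; the covering involution then furnishes a non-trivial element of $\aut(X)$. For $H^{2}=4$, to conclude that $\varphi_{H}$ embeds $X$ as a smooth quartic in $\PP^{3}$ I must rule out the Saint-Donat obstructions: an effective class $E$ with $E^{2}=0$ and $E\cdot H\in\{1,2\}$ (hyperelliptic/unigonal behavior), or a $(-2)$-curve $R$ with $R\cdot H=0$. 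The first possibility is closely tied to the no-elliptic-fibration hypothesis, and a direct check in the explicit lattices for $S_{2}$ and $S_{3}$ excludes both. The Torelli theorem then identifies $\aut(X)$ with the finite group of chamber-preserving isometries of $\NS X)$ that lift to Hodge isometries of $H^{2}(X,\ZZ)$; this group is trivial for the two quartic types and equal to $\ZZ/2\ZZ$, generated by the covering involution, for the six double-cover types.

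With $H$ fixed, the final step is to decode the geometry of each $(-2)$-curve $R$ from the intersection number $H\cdot R$. In the double-cover cases a $(-2)$-curve with $H\cdot R=d\in\{1,2,3\}$ maps to a plane curve of degree $d$, and the class $R+R'$ (where $R'$ is its image under the covering involution), together with the arithmetic genus formula applied to $\varphi_{H}(R)$, pins down its contact order with $C_{6}$: this recovers the tritangent lines when $d=1$, the $6$-tangent conics when $d=2$, and the cuspidal cubics of type $S_{6}$ when $d=3$. An analogous computation in types $S_{2}$ and $S_{3}$ produces the hyperplane or quadric sections of the quartic that split into pairs of smooth conics or rational quartics as described in the statement; the auxiliary divisors cutting these sections are reconstructed from residual classes of the form $H-R$ or $2H-R$ in $\NS X)$. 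The main obstacle is the verification, in types $S_{2}$ and $S_{3}$, that $|H|$ has no Saint-Donat defect: one must enumerate all lattice vectors of small square and small intersection with $H$ and show none is effective with the forbidden numerics. Once these potential pathologies are excluded, the remaining arguments reduce to intersection-number bookkeeping in the Gram matrix.
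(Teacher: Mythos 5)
Your overall strategy is the same as the paper's: run through the eight lattices of Nikulin--Vinberg, find the (unique) big and nef class of minimal square ($D^{2}=4$ for $S_{2},S_{3}$, $D^{2}=2$ for the rest), use Saint-Donat together with the absence of elliptic pencils and the positivity of $D$ on the $(-2)$-curves to get either the quartic model or the double plane, and then read off the tritangent lines, $6$-tangent conics, cuspidal cubics, resp.\ the split quadric and hyperplane sections, from relations such as $A_{i}+A_{j}=D$ or $2D$ in the Gram matrices. That part of the proposal matches the paper step for step and is fine (for $S_{6}$ the paper adds a short local argument showing the plane cubics are cuspidal rather than nodal and computing the contact at the cusp, which your ``arithmetic genus plus contact order'' bookkeeping would have to reproduce, but this is a detail).

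The genuine gap is the computation of $\aut(X)$, which is the headline ``trivial iff quartic'' claim. You assert that the group of chamber-preserving isometries of $\mathrm{NS}(X)$ lifting to Hodge isometries is trivial for $S_{2},S_{3}$ and equals $\ZZ/2\ZZ$ otherwise, but you give no way to verify this, and the point is not formal: an automorphism can act \emph{trivially} on $\mathrm{NS}(X)$ and still be nontrivial --- the covering involution in the six double-plane cases does exactly that, acting as $-1$ on the transcendental lattice. So for $S_{2}$ and $S_{3}$ one must actually exclude a non-symplectic involution (and one also needs Kond\=o's theorem \cite{Kondo}, which you never invoke, to kill symplectic automorphisms and, with the bound $\varphi(m)\mid 19$ in rank $3$, to reduce any non-symplectic automorphism to order $2$; the same input is needed to see that $\aut(X)$ is not larger than $\ZZ/2\ZZ$ in the double-cover cases). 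The paper closes this by a geometric fixed-locus argument: a non-symplectic involution has one-dimensional fixed components, it must preserve the unique degree-$4$ polarization, and on the quartic model its fixed curve would force either a line on $X$ (impossible, since the $(-2)$-curves are quartics, resp.\ conics) or, via \cite[Table 1]{AST} and the analysis of linear involutions of $\PP^{3}$, a Picard number at least $8$. Some such verification --- geometric as above, or lattice-theoretic (an involution acting as $-1$ on $T(X)$ must induce $-1$ on the discriminant form of $\mathrm{NS}(X)$ through a chamber-preserving isometry, which one checks fails for $S_{2},S_{3}$) --- is indispensable; as written, this half of the statement is asserted rather than proved.
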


Here we say that a line (respectively a conic) is tritangent (respectively
$6$-tangent) to a sextic if the intersection multiplicity is even
at every intersections points of the line (resp. the conic) and the
sextic. 

We implemented some algorithm in order to obtain classes of big and
nef divisors on K3 surfaces. By exploiting the fact that the fundamental
domain for the Weyl group is bounded, we can obtain all big and nef
(or even ample) classes of square less than or equal to a given bound.
In particular we can compute the first terms of the generating series
\[
\Xi_{X}(T)=\sum_{D\in\NS X)\,\text{big, nef}}T^{D^{2}}.
\]
For example let $X$ be a K3 surface of type $S_{2}$ in Theorem \ref{thm:Main1}.
Then
\begin{thm}
\label{thm:The-generating-series}The generating series $\Xi_{X}$
begins with 
\[
\begin{array}{c}
T^{4}+6T^{10}+6T^{12}+T^{16}+6T^{18}+12T^{22}+6T^{28}+18T^{30}+6T^{34}+7T^{36}\\
+6T^{40}+12T^{46}+6T^{48}+18T^{58}+T^{64}+12T^{66}+12T^{70}+6T^{72}+6T^{76}\\
+18T^{82}+12T^{88}+18T^{90}+12T^{94}+7T^{100}+24T^{102}+18T^{106}+6T^{108}...
\end{array}
\]
\end{thm}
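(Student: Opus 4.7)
The plan is to reduce the theorem to a finite lattice-point enumeration in a bounded region of $\NS X)\otimes\RR$.

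First, I would write down an explicit Gram matrix for $\NS X)$ in a convenient basis, for instance $(H,E_{1},E_{3})$, where $H$ is the quartic hyperplane class and $E_{1},E_{3}$ are chosen among the six smooth rational quartic curves of Theorem~\ref{thm:Main1}~i), one from each of two of the three quadric splittings. The numerical data $H^{2}=4$, $H\cdot E_{i}=4$, $E_{i}^{2}=-2$, together with the three relations $E_{2k-1}+E_{2k}\equiv2H$ coming from the quadric sections, pin down the lattice once it is identified with the entry $S_{2}$ in the Nikulin--Vinberg classification. In particular the remaining four $(-2)$-curves can then be written as explicit integer combinations of $H,E_{1},E_{3}$.

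The characterization of big and nef classes is then purely combinatorial: since $E_{1},\dots,E_{6}$ are by Theorem~\ref{thm:Main1}~i) the only $(-2)$-curves on $X$, the nef cone equals $\bigcap_{i=1}^{6}\{D\cdot E_{i}\geq0\}$, and a nef class $D$ is big if and only if $D^{2}>0$, by Riemann--Roch on a K3 surface. Since $\aut(X)$ is trivial in case $S_{2}$, no further identifications have to be made: the fundamental domain $\mathcal{F}_{X}$ is the whole nef cone.

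The crucial point is that the hypothesis ``no elliptic fibrations'' forbids nef classes of square zero, so $\mathcal{F}_{X}$, viewed modulo scaling inside the positive cone of the signature $(1,2)$ lattice $\NS X)$, is a \emph{bounded} region of the hyperbolic plane. Consequently, for each fixed $n>0$ the set $\{D\in\NS X):D^{2}=n,\ D\cdot E_{i}\geq0\text{ for all }i\}$ is finite and fits inside an explicit box in integer coordinates, obtained by combining the quadratic equation $D^{2}=n$ with the six linear inequalities, for instance after diagonalizing the intersection form on $H^{\perp}$. The algorithm then loops over all even $n$ with $0<n\leq108$, enumerates integer classes $D=aH+bE_{1}+cE_{3}$ satisfying $D^{2}=n$ inside this box, tests the six nef inequalities, and tabulates the result.

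The main obstacle is not conceptual but one of certifying completeness: the a priori coordinate bounds must be proved tight enough to capture every big and nef class up to $D^{2}=108$, yet loose enough that the search terminates in reasonable time. Once this verification is carried out, running the enumeration produces the initial terms of $\Xi_{X}$ displayed in the statement.
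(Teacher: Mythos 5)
Your proposal follows essentially the same route as the paper: identify the six $(-2)$-curves, observe that the big and nef classes are exactly the lattice points of positive square in the nef cone, which is compact in the hyperbolic plane because there are no square-zero nef classes, and then enumerate these classes by computer for each even square up to $108$. The one step you flag as the main obstacle — certifying an explicit search bound that captures every such class — is precisely what the paper supplies: the maximal hyperbolic distance from the ample class $D$ (with $D^{2}=4$) to the six vertices $L_{1},\dots,L_{6}$ of the fundamental polygon ($L_{i}^{2}=36$, $DL_{i}=36$) equals $\arccosh(3)$, so every big and nef class $D'$ satisfies $(DD')^{2}\leq 9\,D^{2}D'^{2}$, i.e. $DD'\leq 6\sqrt{D'^{2}}$, after which the classes of fixed square and bounded degree are enumerated via Shimada's projection onto $D^{\perp}$ and tested for nefness against the six $(-2)$-curves.
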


Theorem \ref{thm:Main1} gives possible realizations of K3 surfaces
$X$ with a finite number of automorphisms and no elliptic fibrations
as double cover of the plane branched over a particular smooth sextic
curve, or as particular quartic surfaces in $\PP^{3}$. In the last
section of this paper, we study the converse, i.e. if the geometric
description of the K3 surfaces given in Theorem \ref{thm:Main1} are
enough to characterize them as surfaces in the list $S_{1},\dots,L(27)$.
It turns out that in four cases among the eight, there are two possibilities.
Let us give an example:
\begin{prop}
Let $Y$ be the K3 surface which is the double cover of the plane
$\pi:Y\to\PP^{2}$ branched over a generic sextic curve that possess
$3$ tritangent lines. The pull-back of each $3$ lines splits as
the union of two $(-2)$-curves. Up to permutation, the intersection
matrix of the six $(-2)$-curves over the lines is one of the following
matrices
\[
\left(\begin{array}{cccccc}
-2 & 3 & 0 & 1 & 0 & 1\\
3 & -2 & 1 & 0 & 1 & 0\\
0 & 1 & -2 & 3 & 0 & 1\\
1 & 0 & 3 & -2 & 1 & 0\\
0 & 1 & 0 & 1 & -2 & 3\\
1 & 0 & 1 & 0 & 3 & -2
\end{array}\right),\,\quad\left(\begin{array}{cccccc}
-2 & 3 & 0 & 1 & 0 & 1\\
3 & -2 & 1 & 0 & 1 & 0\\
0 & 1 & -2 & 3 & 1 & 0\\
1 & 0 & 3 & -2 & 0 & 1\\
0 & 1 & 1 & 0 & -2 & 3\\
1 & 0 & 0 & 1 & 3 & -2
\end{array}\right).
\]
In the first case, the surface $Y$ has type $L(24)$. In the second
case, $Y$ contains also only six $(-2)$-curves, but there exists
a fibration. Both cases exist.
\end{prop}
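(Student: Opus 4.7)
The plan is to first pin down the pairwise intersection numbers forced by the double-cover geometry, then sort the resulting matrices into two orbits under relabeling, and finally recognize each orbit either as the $L(24)$ lattice or as a K3 admitting an elliptic fibration.

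First I would write $\pi^{*}L_i = E_i^+ + E_i^-$, with $E_i^\pm$ smooth rational and exchanged by the covering involution $\iota$. Adjunction on a K3 forces $(E_i^\pm)^2 = -2$, and from $(\pi^{*}L_i)^2 = 2$ combined with $E_i^+\cdot \pi^{*}L_i = L_i^2 = 1$ one reads $E_i^+\cdot E_i^- = 3$. For $i\neq j$, the identity $(\pi^{*}L_i)\cdot(\pi^{*}L_j) = 2$ together with $\iota$-symmetry (so that $E_i^+\cdot E_j^+ = E_i^-\cdot E_j^-$ and $E_i^+\cdot E_j^- = E_i^-\cdot E_j^+$) yields
\[
E_i^+\cdot E_j^+ + E_i^+\cdot E_j^- = 1,
\]
so one of the two is $1$ and the other $0$; this attaches a sign $\epsilon_{ij}\in\{\pm\}$ to each edge of the triangle on $\{1,2,3\}$.

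I would then analyze the residual relabeling freedom: swapping $E_i^+ \leftrightarrow E_i^-$ at vertex $i$ flips both $\epsilon_{ij}$ for $j\neq i$ simultaneously, so the only invariant of $(\epsilon_{12},\epsilon_{13},\epsilon_{23})$ is the product $\epsilon_{12}\epsilon_{13}\epsilon_{23}$. This gives exactly two orbits of size four, and after choosing a canonical labeling one recovers precisely the two $6\times 6$ Gram matrices of the statement. For the first matrix, the sublattice spanned by the six classes has rank $4$ (modulo the two relations $E_1^{+}+E_1^{-} = E_i^{+}+E_i^{-}$ for $i=2,3$); writing its Gram matrix on a basis such as $(E_1^+,E_1^-,E_2^+,E_3^+)$ and comparing with Nikulin's list identifies it with $L(24)$, so combined with Theorem \ref{thm:Main1}(5) this forces $Y$ to be of that type. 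For the second matrix, a direct reading of the pairings shows that $E_1^+, E_2^-, E_3^-$ meet pairwise with multiplicity $1$, so the effective divisor $D = E_1^+ + E_2^- + E_3^-$ satisfies $D^2 = 0$ and is a cycle of three $(-2)$-curves, i.e.\ a Kodaira fiber of type $I_3$; hence $|D|$ defines an elliptic fibration $Y\to\PP^1$.

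Finally, for existence: case~$1$ is realized by every $L(24)$ K3 via the construction of Theorem \ref{thm:Main1}(5). To realize case~$2$, I would prescribe the rank-$4$ lattice of the second Gram matrix as a N\'eron-Severi lattice, apply the surjectivity of the period map to produce a K3 with that $\NS$, and then check that the class $E_i^+ + E_i^-$ is a base-point-free polarization of degree $2$ realizing $Y$ as a double cover of $\PP^2$ with three tritangent lines as branch divisor. The main obstacle is this last step: verifying that the six listed curves exhaust the $(-2)$-curves on the case~$2$ surface, which requires a Vinberg-type analysis of the Weyl group action on the rank-$4$ lattice of the second matrix and an explicit description of its nef cone.
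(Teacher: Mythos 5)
Your computation of the intersection numbers ($E_i^+E_i^-=3$, cross terms in $\{0,1\}$ summing to $1$ via the covering involution) and your reduction to two configurations via the sign-product invariant are correct, and this is essentially the paper's combinatorial step, just organized more systematically. Your treatment of the fibration in the second case is a genuinely different and more direct route: you exhibit the nef class $D=E_1^++E_2^-+E_3^-$ with $D^2=0$ (a cycle of three $(-2)$-curves), whereas the paper instead recognizes the rank-$4$ lattice spanned by the six curves, in the basis $B_1,B_2,B_3,B_5$, as Vinberg's lattice $L(25)\simeq\left[\begin{array}{cc}0&3\\3&-2\end{array}\right]\oplus\mathbf{A}_2(-1)$ and reads the fibration off from that identification (the geometric source, a $9$-tangent cubic, is only given in the subsequent remark). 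That part of your argument is a nice alternative.

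The genuine gap is the second half of the statement: that in case $2$ the surface still has \emph{only six} $(-2)$-curves. You explicitly leave this unproved ("the main obstacle"), but it is exactly the content the paper supplies, and it cannot be skipped: one must first rule out that $\mathrm{NS}(Y)$ is a proper overlattice of the lattice spanned by the six curves (an overlattice could carry additional $(-2)$-classes and change the type), which the paper does by computing that the discriminant group of $L(25)$ is $\ZZ/9\ZZ\times\ZZ/3\ZZ$ with no nontrivial isotropic element, and then one appeals to Vinberg's classification, in which $L(25)$ is $2$-reflective with precisely the six given $(-2)$-classes; your proposed "Vinberg-type analysis" is the right tool but is not carried out. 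The same identification issue touches case $1$: invoking Theorem \ref{thm:Main1}(5) is circular, since that theorem presupposes $Y$ has type $L(24)$; what is needed (and what the paper does) is that the six curves span a lattice isometric to the one of Section \ref{subsec:Lattice-L24}, which for generic $Y$ (Picard number $4$, no overlattice) equals $\mathrm{NS}(Y)$. Finally, "both cases exist" is only sketched in your proposal: surjectivity of the period map gives a K3 with the prescribed lattice, but you still must verify that the degree-$2$ class defines a morphism onto $\PP^2$ branched along a smooth sextic with three tritangent lines; the paper settles existence instead by explicit equations together with reduction modulo $p$ and the Artin--Tate formula for case $1$, and defers the details of case $2$ to the remark and to \cite{Roulleau}.
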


Let $X$ be a K3 surface of type in the list $S_{1},\dots,L(27)$.
Let us say that a K3 surface $X'$ is a twin if it has the same geometric
description as in Theorem \ref{thm:Main1}, the same Picard number,
but a different Néron-Severi lattice. We obtain:
\begin{thm}
The K3 surfaces of types $S_{3},S_{4},L(24),L(27)$ have twins and
the Néron-Severi lattices of each twin is unique. Surfaces of types
$S_{1},S_{2},S_{5},S_{6}$ are characterized by their description
in Theorem \ref{thm:Main1}.
\end{thm}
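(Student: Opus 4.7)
The plan is to work through the eight types $S_{1},\dots,L(27)$ one by one, determining for each the set of N\'eron--Severi lattices compatible with the geometric description of Theorem \ref{thm:Main1}; the proposition just preceding the statement illustrates the method in the $L(24)$ case.

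For each type I start by identifying on $X$ the natural polarisation $L$ (the hyperplane class for the quartic types $S_{2}, S_{3}$, the pull-back $\pi^{*}\OO_{\PP^{2}}(1)$ for the sextic double cover types) together with the $(-2)$-curves arising as components of the special sections or split pull-backs. In the sextic double cover cases the deck involution exchanges the two components of each splitting, so the intersection inside each pair is determined by the degree alone: a tritangent line gives $C\cdot C'=3$, a six-tangent conic gives $C\cdot C'=6$, with an analogous explicit number for the cuspidal cubics of type $S_{6}$; between two distinct splittings of degrees $d_{1},d_{2}$ the same involution argument forces $C_{1}\cdot C_{2}+C_{1}\cdot C'_{2}=d_{1}d_{2}$, leaving only a single non-negative integer parameter per pair. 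In the quartic cases $S_{2}, S_{3}$ the analogous analysis of the hyperplane and quadric splittings yields the corresponding finite list. In every case the candidate Gram matrices form a finite, explicit list up to permutations of the splittings and label swaps within each pair.

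For each candidate Gram matrix I would then assemble the full rank-$\rho$ sublattice generated by $L$ and the $(-2)$-curves, compute its saturation inside the K3 lattice via Nikulin's discriminant form calculus, and compare the result with the tabulated N\'eron--Severi lattices of $S_{1},\dots,L(27)$ recorded in \cite{Nikulin2} and \cite{Vinberg}. Surjectivity of the period map produces a K3 surface realising any admissible saturated lattice. The expected outcome is that for the types $S_{1}, S_{2}, S_{5}, S_{6}$ all admissible candidates collapse after saturation to the lattice of the classification, so the geometric description is characterising; whereas for $S_{3}, S_{4}, L(24), L(27)$ exactly one further lattice survives, the twin, and it is realised --- as in the preceding proposition --- by a K3 surface that carries a genus-one fibration detected by a nef isotropic class in the lattice, hence falls outside the no-elliptic-fibration classification of Theorem \ref{thm:Main1}.

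The main obstacle is the combinatorial-arithmetic bookkeeping: one must enumerate the candidate Gram matrices up to permutations of splittings and simultaneous label swaps, decide when two survivors define isometric lattices after saturation inside the K3 lattice, and finally verify that no third lattice appears. Once this lattice-theoretic enumeration is settled, existence of the twin follows from Nikulin's surjectivity theorem and its uniqueness is a consequence of the finite check that only two isomorphism classes of saturated lattices survive.
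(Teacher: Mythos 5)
Your overall route is the same as the paper's: enumerate the possible Gram matrices of the split $(-2)$-curves using the deck involution and the degree constraints, impose the rank condition, analyse over-lattices via discriminant forms, match against the Nikulin--Vinberg tables, and realise the surviving lattices. However, there is a genuine gap in the step where you expect the saturation/discriminant calculus plus table comparison to leave ``exactly one further lattice.'' In several cases the purely lattice-theoretic sieve does \emph{not} collapse the list, and the paper has to invoke geometric exclusions: for $S_{2}$ the discriminant group does contain a non-trivial isotropic element, so the over-lattice $S_{5}$ is a legitimate even over-lattice and is eliminated only because a K3 with N\'eron--Severi group $S_{5}$ carries no ample class of square $4$ (while the quartic does); for the twins of $S_{3}$ and $S_{4}$ the lattices $S_{114}$ and $S_{113}$ admit over-lattices which are excluded because K3 surfaces with those N\'eron--Severi groups contain only three $(-2)$-curves, fewer than the configuration produces; and in the $L(27)$ case a \emph{third} family of admissible Gram matrices (tuples $T_{6},\dots,T_{10}$ in the paper, giving $2U\oplus\langle-2\rangle\oplus\langle-2\rangle$) survives the rank condition and is ruled out only because such a surface has just six $(-2)$-curves while the line-plus-three-conics configuration forces at least eight. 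So ``verify that no third lattice appears'' is not bookkeeping; it requires these surface-theoretic inputs, and without them your expected dichotomy is literally false at the Gram-matrix level for $L(27)$.

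A second, smaller gap concerns existence of the twins. A twin is by definition a surface with the \emph{same geometric description} (tritangent lines, $6$-tangent conics, etc.), so surjectivity of the period map, which only produces a K3 with the prescribed N\'eron--Severi lattice, is not by itself enough: one must check on the twin lattice that the square-two class is ample and base-point-free and that the relevant $(-2)$-classes are irreducible and map to the stated lines and conics. The paper does exactly this for the $L(27)$ twin (explicit ample class $D$ with $D^{2}=2$, base-point-freeness, lists of irreducible degree-$1$ and degree-$2$ classes) and otherwise certifies existence through explicit equations whose Picard numbers are verified by reduction modulo primes and the Artin--Tate formula. Also note that in the quartic cases $S_{2},S_{3}$ there is no covering involution, so your ``single integer parameter per pair'' symmetry is unavailable and the parameter space to sweep is larger (e.g.\ $a_{i}+b_{i}+c_{i}+d_{i}=16$ for $S_{2}$), though this is a matter of effort rather than of principle.
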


The surfaces that are twin with surfaces of type $S_{3},S_{4},L(24),L(27)$
have some elliptic fibrations, three of these also have a finite automorphism
group; these surfaces are discussed with more details in \cite{Roulleau}. 

In the last section, we also give some explicit examples of K3 surfaces
of various type, using the method of van Luijk \cite{vL} refined
by Elsenhans and Jahnel \cite{EJ} in order to check their Picard
number. We moreover obtain that
\begin{thm}
The moduli spaces of K3 surfaces of type $S_{1},S_{2},S_{3},S_{5}$
and $S_{6}$ are unirational. 
\end{thm}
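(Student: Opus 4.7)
The plan is to exhibit, for each of the five types in the statement, an explicit rational parameter space that dominates the corresponding moduli space. For every type, Theorem \ref{thm:Main1} already provides a concrete projective incarnation of $X$ (a quartic in $\PP^{3}$ with a prescribed reducible hyperplane or quadric section, or a double cover of $\PP^{2}$ branched over a sextic with prescribed multitangent lines, conics or cuspidal cubics), and I would parametrize these incarnations directly. Since each moduli space has dimension $20-\rho=17$, the dimension counts below are meant as a sanity check that the constructed rational maps are indeed dominant.

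For $S_{2}$ and $S_{3}$ I would first normalize the auxiliary data using $PGL_{4}$: fix the three quadrics $Q_{1},Q_{2},Q_{3}$ in type $S_{2}$, respectively the two hyperplanes $H_{1},H_{2}$ in type $S_{3}$, in general position. The locus of quartics $X$ for which $X\cap H_{i}$ splits as a union of two conics is described, after prescribing one of the two conic factors, by a linear condition (the residual factor is determined linearly by the ideal membership $f|_{H_{i}}\in(q_{i})$), and analogously $X\cap Q_{i}$ splits as two rational quartics under a linear condition once the splitting scheme is specified. Parametrizing the splitting data and the quartic coefficients separately gives an affine space mapping dominantly to the moduli space.

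For $S_{1}$, $S_{5}$ and $S_{6}$ I would proceed by the same principle applied to the branch sextic $C_{6}$. In type $S_{5}$, after normalizing the two tritangent lines to be $\{x=0\}$ and $\{y=0\}$, tritangency translates into the restrictions $f(0,y,z)$ and $f(x,0,z)$ being perfect squares $g_{1}(y,z)^{2}$ and $g_{2}(x,z)^{2}$; rather than treating ``being a square'' as a non-linear condition on $f$, I would take the cubics $g_{1},g_{2}$ themselves (subject to the compatibility $g_{1}(0,z)=\pm g_{2}(0,z)$) as independent parameters, so that the resulting sextics of the form $g_{1}(y,z)^{2}+x\cdot(\,\cdots)$ with prescribed boundary values form an affine bundle. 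Dimension bookkeeping $(7+7-4)+15-5=17$ matches, confirming dominance onto the $17$-dimensional moduli. For $S_{1}$ the same idea applies, with the three tritangent lines replaced by three $6$-tangent conics (parametrize each conic and its square-root cubic factor of $f$ restricted to it). For $S_{6}$ one uses that the space of cuspidal cubics in $\PP^{2}$ is rational and that imposing tangential/cuspidal intersection of $C_{6}$ with such a cubic is, after introducing the relevant square-root factors as parameters, a linear condition on $f$.

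The main obstacle is precisely this ``squareness'' issue: the natural conditions (restriction of the sextic to a tangent curve is a square, or restriction of the quartic to a hyperplane splits as a product) are not linear in $f$, and a naive parameter count on $f$ alone would fail. The trick is to introduce the would-be factors $g_{i}$ as independent variables and work over the corresponding incidence variety, which is rational because the squaring (respectively product) map $g\mapsto g^{2}$ is a dominant morphism from affine space. A secondary point to verify is that the residual $PGL_{3}$- or $PGL_{4}$-action stabilizing the normalized configuration has a rational quotient; this is automatic in our cases because the stabilizers are solvable linear groups acting linearly on affine spaces. Putting these pieces together in each of the five cases yields the desired unirationality.
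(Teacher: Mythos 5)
The central gap is that your dimension counts cannot by themselves certify dominance, because the geometric descriptions you parametrize do not pin down the N\'eron--Severi lattice. This bites concretely for $S_{3}$, the one type in your list that has a twin: a quartic with two hyperplane sections each splitting into two conics has intersection matrix $M_{1}$ (type $S_{3}$) or $M_{2}$ (the twin lattice $S_{114}$, which carries elliptic fibrations), and \emph{both} families are $17$-dimensional. In your parametrization the conics $C_{1}\subset H_{1}$ and $C_{3}\subset H_{2}$, chosen generically, meet the line $H_{1}\cap H_{2}$ in four distinct points, so $C_{1}C_{3}=0$ on the surface and the generic member of your family is the twin, not $S_{3}$; your map is dominant onto the wrong moduli space unless you additionally impose that the two conics pass through a common point of $H_{1}\cap H_{2}$ (the $M_{1}$ configuration). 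More generally, for every type you must show that the generic member of the constructed family has Picard number $3$ and the intended lattice rather than an overlattice or a different $17$-dimensional lattice; the paper does this by combining the lattice-theoretic propositions (no nontrivial isotropic elements in the discriminant group, hence no overlattice), explicit examples whose Picard number is certified via reduction modulo two primes (van Luijk \cite{vL}, Elsenhans--Jahnel \cite{EJ}, Artin--Tate), and the irreducibility of the moduli spaces proved in \cite{Roulleau}. None of these ingredients appear in your proposal, and they are exactly what converts ``a rational $17$-dimensional family'' into ``a family dominating the moduli space of type $S_{i}$''.

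There are also problems with the individual constructions. For $S_{2}$ you cannot normalize three quadrics by $PGL_{4}$ ($27$ parameters against $15$), and requiring the quartic to contain an independently chosen splitting curve on each of the three quadrics is over-determined (roughly $3\times 17$ linear conditions on $35$ coefficients), so the splitting data cannot be parametrized freely: the three curves must satisfy the incidence relations forced by the rank-$3$ lattice, and the rationality of that constrained configuration space is precisely what needs an argument. The paper instead takes two rational quartic curves meeting at a single point and the quartics containing them; the third split quadric section then exists automatically once the N\'eron--Severi group is $S_{2}$. For $S_{6}$, the splitting of the pull-back of a cuspidal cubic is not simply ``the restriction of $f$ is a square'': the sextic passes through the cusp, the local condition there is special, and the sign-compatibility constraints at the many tangency points make your incidence variety reducible, with the dominant component not identified; the paper avoids this entirely by using the degree-$6$ model, a complete intersection of a quadric and a cubic in $\PP^{4}$ containing a rational normal quartic and an incident line. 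Finally, a small slip: your $S_{5}$ bookkeeping $(7+7-4)+15-5$ equals $20$, not $17$; the correct count is $(4+4-1)+15-1-4=17$. The $S_{1}$ and $S_{5}$ parametrizations are essentially the paper's (there they appear as the explicit forms $q_{1}q_{2}q_{3}-f_{3}^{2}$ and $l_{1}l_{2}q_{4}-f_{3}^{2}$), but as written the $S_{3}$ case proves the wrong statement and the Picard-number verification is missing throughout.
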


This is done by constructing the families of these examples. In the
Appendix, we give various datas of the K3 surfaces we are considering,
(such that their Néron-Severi lattices). The reader can find the Magma
programs used for the computations into the ancillary file in the
arXiv version of this paper. 

\textbf{Acknowledgements} The author is grateful to the referees for
many suggestions, comments and ideas improving the paper. The author
is also thankful to Edgar Costa for his computation of the zeta functions
used in Examples \ref{exa:COSTA_S2}, \ref{exa:Costa} and to Carlos
Rito for discussions on the construction of some sextic curves. 

\section{Preliminaries}

\subsection{On the automorphisms of K3 surfaces with low Picard number}

Let $X$ be a K3 surface with finite automorphism group and Picard
number $\leq8$. The following result is \cite[Main Theorem, Lemma 2.3]{Kondo}:
\begin{prop}
\label{prop:KONDO-symplectic-automorphism} The group of symplectic
automorphisms is trivial. If moreover $X$ is general then the group
$\aut(X)$ is the trivial group or it is generated by a non symplectic
involution. 
\end{prop}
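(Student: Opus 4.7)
The plan is to split $\aut(X)$ into its symplectic and non-symplectic parts and analyze each using Nikulin's theory of finite group actions on K3 lattices.

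First I would show that any symplectic automorphism is trivial. If $g\in\aut(X)$ satisfies $g^{*}\omega_{X}=\omega_{X}$, then $g$ acts trivially on the transcendental lattice $T(X)$, because $T(X)\otimes\CC$ is generated by the Galois conjugates of $\omega_{X}$ inside $H^{2}(X,\CC)$. Hence the coinvariant lattice $(H^{2}(X,\ZZ)^{g})^{\perp}$ is orthogonal to $T(X)$ and therefore embeds into $\NS X)$. Nikulin's classification of finite symplectic actions implies that this coinvariant sublattice is non-zero and negative definite of rank at least $8$: for a prime order $p=2$ it is the Nikulin lattice of rank $8$, and strictly larger ranks appear for $p=3,5,7$. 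Combined with the fact that the invariant part of $\NS X)$ contains an ample class, this forces $\rho\geq 9$, contradicting $\rho\leq 8$.

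Having ruled out the symplectic kernel, the character $\aut(X)\to\CC^{*}$ defined by $g^{*}\omega_{X}=\lambda_{g}\omega_{X}$ becomes injective, so $\aut(X)$ is cyclic of some finite order $n$ and $\lambda_{g}$ is a primitive $n$-th root of unity for a generator $g$. Viewing $T(X)$ as a $\ZZ[\zeta_{n}]$-module via $g$, Nikulin's theorem on non-symplectic actions of order $n$ on K3 surfaces then yields the classical divisibility $\varphi(n)\mid \rk T(X))$, together with additional constraints on the spectrum of $g$ acting on $T(X)$.

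The final step, and the main obstacle, is the genericity input needed to go from these divisibility constraints (which by themselves do not force $n\leq 2$ when $\rk T(X))\geq 14$) to the statement of the proposition. For $X$ very general in the moduli space of K3 surfaces with prescribed N\'eron-Severi lattice, a countability argument in the period domain shows that the Hodge isometries of $T(X)$ reduce to $\{\pm\mathrm{id}\}$: for each potential order $n>2$, the existence of a cyclic Hodge isometry of that order cuts out a proper analytic subset of the period domain, and the union over all $n$ remains a countable union of proper subsets. Hence $n\in\{1,2\}$, and when non-trivial the generator is a non-symplectic involution. The technical ingredients I would invoke as black boxes are Nikulin's two cited papers for the lattice-theoretic steps and the Torelli theorem for the period-domain density.
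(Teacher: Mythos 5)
Your argument is correct, but note that the paper does not actually prove this proposition: it is quoted directly from Kondo \cite[Main Theorem, Lemma 2.3]{Kondo}, and the only added content in the paper is the remark that for Picard rank $3$ the genericity hypothesis can be dropped, since a non-symplectic automorphism of order $m$ must satisfy $\varphi(m)\mid 19$, forcing $m=2$. Your two steps reconstruct the standard proof of the cited result and are sound: for the symplectic part, the coinvariant lattice of a non-trivial finite symplectic automorphism is negative definite of rank at least $8$ and lies inside the N\'eron--Severi group, orthogonally to an invariant ample class, forcing $\rho\geq 9$ and contradicting $\rho\leq 8$; for the generic part, the countability argument in the period domain shows that for very general periods the only Hodge isometries of $T(X)$ are $\pm\mathrm{id}$, and combined with the faithfulness of $\aut(X)\to O(T(X))$ (which relies on the first step) this gives that $\aut(X)$ is trivial or generated by a non-symplectic involution. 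Two minor points of hygiene: the triviality of a finite symplectic automorphism on $T(X)$ is more cleanly justified by the fact that $T(X)\otimes\QQ$ is the minimal rational sub-Hodge structure containing $\omega_X$ (your ``Galois conjugates'' phrasing is not the standard statement), and the coinvariant lattice of a symplectic involution is $E_8(-2)$ rather than the Nikulin lattice (both have rank $8$, which is all you use); also, you implicitly use the finiteness of $\aut(X)$ (part of the standing hypothesis) to guarantee that every symplectic automorphism has finite order so that Nikulin's classification applies, and it would be worth saying so explicitly.
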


In case of Picard rank $3$, one can remove the hypothesis that $X$
is general. Indeed a non-symplectic automorphism should have order
$m$ such that $\varphi(m)$ divides $22-3=19$ (where $\varphi$
is the totient Euler function), and that implies $m=2$. 

\subsection{Linear series on K3 surfaces}

 To be self-contained, let us recall the following results of Saint
Donat \cite{SaintDonat}: 
\begin{thm}
\label{thm:SaintDonat}Let $X$ be a K3 surface that does not possess
an elliptic pencil and let $D$ be a big and nef divisor on $X$.\\
a) One has $h^{0}(D)=2+\frac{1}{2}D^{2}$ and the linear system $|D|$
is base point free,\\
b) the morphism $\varphi_{|D|}$ associated to $|D|$ is either $2$
to $1$ onto its image (hyperelliptic case) or it maps $X$ birationally
onto its image: in both the cases it contracts the $(-2)$-curves
$\G$ such that $D\G=0$. \\
c) if $|D|$ is hyperelliptic, then $\varphi_{|D|}$ is a double cover
of $\PP^{2}$ or of the Veronese surface in $\PP^{5}$. \\
d) the linear system $|D|$ is hyperelliptic with $D^{2}\geq4$ if
and only if there is a nef divisor $B$ with $B^{2}=2$ and $D=2B$.\\
e) if $|D|$ is hyperelliptic and $D^{2}=2$, then $\varphi_{|D|}$
is a double cover of $\PP^{2}$.
\end{thm}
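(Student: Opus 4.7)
The plan is to follow the classical strategy of Saint-Donat, organized around Riemann--Roch, Bertini, and an analysis of fixed parts, and then to specialize to the hyperelliptic case via curves of minimal degree.

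For part (a), I would start from Riemann--Roch on the K3 surface: $\chi(D) = 2 + \frac{1}{2}D^{2}$. Since $D$ is big and nef and $K_{X}=0$, Kawamata--Viehweg vanishing (or the direct Ramanujam-style argument for K3) gives $h^{1}(D)=h^{2}(D)=0$, and Serre duality confirms $h^{2}(D)=h^{0}(-D)=0$. Hence $h^{0}(D) = 2 + \frac{1}{2}D^{2}$. For base-point-freeness, I would write $|D| = |M| + F$ with $M$ the moving part and $F \geq 0$ the fixed part, and analyze the potential base locus of $|M|$. The crucial input is Saint-Donat's observation that if $|D|$ had a base point or a nontrivial fixed part, then intersecting with a general member would force the existence of a smooth curve of arithmetic genus $1$ moving in a pencil on $X$ — equivalently, an elliptic fibration. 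Since $X$ has no elliptic pencil by hypothesis, both $F=0$ and the base locus must be empty.

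For part (b), given that $|D|$ is base-point-free, the morphism $\varphi_{|D|}$ is well-defined. A general member $C \in |D|$ is a smooth irreducible curve of genus $g = 1 + \frac{1}{2}D^{2}$ by adjunction and Bertini. The restriction map $H^{0}(X,D) \to H^{0}(C, D|_{C})$ is surjective (from $h^{1}(\mathcal{O}_{X})=0$), and $D|_{C}$ is the canonical class $K_{C}$. Then $\varphi_{|D|}|_{C}$ is the canonical map of $C$, which is either birational or $2$-to-$1$; this dichotomy propagates to $\varphi_{|D|}$ itself, giving the hyperelliptic vs.\ birational alternative. In the birational case, $\varphi_{|D|}$ contracts exactly the curves $\Gamma$ with $D\cdot\Gamma=0$. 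By Hodge index, the intersection form restricted to the span of such curves is negative definite; since $X$ is a K3, each irreducible contracted curve is a smooth rational $(-2)$-curve, and the negative-definite configuration of $(-2)$-curves is a disjoint union of ADE Dynkin diagrams, giving rational double points on the image.

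For parts (c), (d), (e), I would analyze the hyperelliptic case. The image $\varphi_{|D|}(X)$ is a surface of degree $\frac{1}{2}D^{2}$ in $\mathbb{P}^{h^{0}(D)-1} = \mathbb{P}^{1+\frac{1}{2}D^{2}}$, which has minimal degree (i.e.\ $\deg = \operatorname{codim}+1$). The classification of varieties of minimal degree (del Pezzo--Bertini) gives: $\mathbb{P}^{2}$, the Veronese surface in $\mathbb{P}^{5}$, or a rational normal scroll. Scrolls are ruled by lines, and the preimage of such a ruling would produce a pencil of elliptic curves on $X$, contradicting the hypothesis; so only $\mathbb{P}^{2}$ or the Veronese survive, proving (c). For (d), assume $D^{2}\geq 4$ and $|D|$ hyperelliptic. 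If $\varphi_{|D|}$ factors through $\mathbb{P}^{2}$, then composing with a Veronese-type embedding gives $D = 2B$ with $B^{2}=2$; if it factors through the Veronese in $\mathbb{P}^{5}$, the Veronese itself is $v_{2}(\mathbb{P}^{2})$, and again $D=2B$ with $|B|$ realizing the double cover of $\mathbb{P}^{2}$ of degree $2$. Conversely, $D=2B$ with $B^{2}=2$ obviously gives a hyperelliptic linear system. For (e), the case $D^{2}=2$ forces $h^{0}(D)=3$, so the image lies in $\mathbb{P}^{2}$ and the map is automatically a double cover.

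The main obstacle will be the base-point-freeness argument in (a): ruling out a fixed part or base locus in the absence of an elliptic pencil is the delicate part of Saint-Donat's theorem, since one must trace through the various possibilities for decompositions $D = M + F$ and show each yields, via Riemann--Roch on components and adjunction, a genus-$1$ pencil. The classification of minimal-degree surfaces used in (c) is standard, and the hyperelliptic structure in (d)--(e) then follows almost formally from it.
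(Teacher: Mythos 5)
Your outline is correct and is precisely the classical argument of Saint-Donat (Riemann--Roch plus vanishing, reduction of base points to genus-one pencils, the canonical-map dichotomy on a general member, and the del Pezzo--Bertini classification of minimal-degree surfaces with scrolls excluded by the no-elliptic-pencil hypothesis). The paper itself gives no proof of this statement --- it is recalled verbatim from \cite{SaintDonat} --- so your reconstruction matches the intended source and nothing further is needed.
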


A K3 surface with no elliptic pencil cannot have nef classes with
self-intersection $0$. 

\subsection{Shimada algorithm to compute classes with fixed square and degree\label{subsec:An-algorithm-to}}

Let $X$ be a smooth surface. Let $d,k$ be positive integers and
let $H$ be an ample class. The following is an algorithm for computing
all classes $D$ with $D^{2}=d$ and degree $HD=k>0$. First one projects
onto the orthogonal $H^{\perp}$ of $H$ by the linear map 
\[
\pi:D\in\NS X)\to(H^{2})D-(DH)H\in H^{\perp}.
\]
The class $D$ has square $D^{2}=d$ and degree $k=HD$ if and only
if 
\[
\pi(D)^{2}=(H^{2})^{2}D^{2}-(H^{2})(DH)^{2}=(H^{2})^{2}d-k^{2}H^{2}\text{ and }DH>0.
\]
Since $H$ is ample, $H^{\perp}$ is a negative definite lattice,
thus when $d>0$ and $k>0$ are fixed, there are only a finite number
of solutions $s\in H^{\perp}$ with $s^{2}=(H^{2})^{2}d-k^{2}H^{2}$,
which can be found by a computer calculation. Then we take the reverse
path to the Néron-Severi lattice by the application $s\to\frac{1}{H^{2}}(s+kH)$,
discarding solutions $D$ that are not in the integral lattice $\NS X)$
or such that $DH<0$. 

When $X$ is a K3 surface with a finite number of $(-2)$-curves it
is then easy to check if a divisor class $D$ in $\NS X)$ is nef
or even ample. If moreover the fundamental domain of the Weyl group
is bounded, we explain in the next section how we can compute all
classes of a given square, without restriction on their degree. 

In fact one can understand the above algorithm as a easy proof that
on a surface, the Hilbert Scheme of curves of given degree and square
with respect to a polarization has only a finite number of irreducible
components. The idea of it came from an algorithm of Lairez and Sertoz
in \cite{Lairez} (following an idea of Deygtarev) for computing $(-2)$-curves,
but soon after completion of this paper appeared \cite{DS} where
it turns out that it was already described and used by Shimada in
\cite{Shimada}. We used Magma software \cite{magma} for the computations. 

We use the above algorithm to compute the $(-2)$-classes, and Vinberg's
sieve (see \cite{VinbergBombay}) to sort the $(-2)$-curves among
the $(-2)$-classes. 

\subsection{Néron-Severi lattice and hyperbolic geometry\label{subsec:N=0000E9ron-Severi-hyperbolic}}

Let $X$ be a surface of Picard number $\rho\geq2$. Let $\PP^{\rho-1}$
be the projectivisation of the real space $\NS X)\otimes\RR$. The
image $\mathcal{L}_{X}$ in $\PP^{\rho-1}$ of the positive cone $\{c\in\NS X)_{\RR}\,|\,c^{2}>0\}$
is a ball of dimension $\rho-1$. We will often not make a notational
distinction between classes in $\NS X)\otimes\RR$ and in $\PP^{\rho-1}$,
also we will often confuse a primitive effective divisor with its
image in $\mathcal{L}_{X}$. For $D,D'\in\mathcal{L}_{X}$ let us
define 
\[
\ell_{X}(D,D')=\frac{(DD')^{2}}{D^{2}D'^{2}}.
\]
By the Hodge Index Theorem $\ell_{X}(D,D')\geq1$ with equality if
and only if $D$ and $D'$ are proportional (observe that the function
$\ell$ is homogeneous). In fact, the following function: 
\[
d_{X}(D,D')=\arccosh\left(\sqrt{\ell_{X}(D,D')}\right),\,\,D,D'\in\mathcal{L}_{X}
\]
is a distance on $\mathcal{L}_{X}$ for which the geodesic between
two points in $\mathcal{L}_{X}$ is the intersection of $\mathcal{L}_{X}$
with the line in $\PP^{\rho-1}$ trough these points (see \cite[Chapter 3, Section 3.2]{Ratcliffe}). 

Let us suppose that the fundamental domain $\mathcal{F}_{X}$ of the
Weyl group is compact. Let us fix an ample element $H$ in $\mathcal{F}_{X}$,
let $d_{max}(\mathcal{F}_{X},H)$ be the maximum of the distances
between $H$ and the points in $\mathcal{F}_{X}$ and let us define
$\ell(\mathcal{F}_{X},H)=\cosh(d_{max}(\mathcal{F}_{X},H))$. Then
for any big and nef classes $D$ in $\NS X)$, one has 
\[
(HD)^{2}\leq\ell(\mathcal{F}_{X},H)H^{2}D{}^{2}.
\]
The consequence is that for the surfaces with compact $\mathcal{F}_{X}$,
we can apply the algorithm in Section \ref{subsec:An-algorithm-to}
to find all nef and big classes $D$ in $\NS X)$ with fixed square
$D^{2}=d$: it is enough to search classes $D$ with $D^{2}=d$ up
to degree $\sqrt{\ell(\mathcal{F}_{X},H)dH^{2}}$ with respect to
the ample class $H$. Then we can compute the generating series 
\[
\Theta_{X}(T)=\sum_{D\text{ big,nef,primitive}}T^{D^{2}}
\]
of the primitive nef and big classes in $\NS X)$, up to some square.
\\
The distances $\ell(\mathcal{F}_{X},H)$ are computed by using the
extremal rays $r_{1},\dots,r_{k}$ of the nef cone (the hyperplanes
$r_{j}^{\perp}$ are the facets of the effective cone). For example,
in case of rank $3$, the rays of the nef cone are elements that are
orthogonal to two $(-2)$-curves $A_{i},A_{j}$ with $A_{i}A_{j}\in\{0,1\}$. 

Any even lattice of rank $\geq5$ represents $0$ i.e. there is a
non-trivial class $c$ with $c^{2}=0$, thus the fundamental domain
of a K3 surface with Picard number $\geq5$ cannot be bounded. Moreover
if the fundamental domain of a K3 surface $X$ is bounded, then $X$
contains only a finite number of $(-2)$-curves. Indeed if there are
an infinite number of $(-2)$-curves then the automorphism group of
$X$ is infinite by the Theorem of Sterk \cite{Sterk}. Therefore
there exist big and nef classes which have an infinite orbit with
the same square; but by the above discussion such a phenomenon is
not possible on a bounded domain. 

Finally, let us mention \cite{ShimadaLatt}, where algorithms to study
pull-back of divisors by double covers are developed and can be used
in this paper. 

\section{Rank 3 and compact cases}

In this Section we study K3 surfaces with Picard rank $3$ such that
the fundamental polygon for the Weyl group is compact, in particular
these surfaces have no elliptic fibrations. According to \cite{Nikulin2},
there are $6$ types of lattices $S_{1},\dots,S_{6}$ for the Néron-Severi
lattice of such surfaces; Nikulin described their associated fundamental
domain $\mathcal{F}_{X}$. 

\subsection{Surfaces with Néron-Severi lattice of type $S_{1}$}

Let us study K3 surfaces $X$ with a finite number of $(-2)$-curves
and such that their Néron-Severi lattice is generated by $L,A_{1},A_{2}$,
with $L$ nef and $A_{1},A_{2}$ two $(-2)$-curves, with intersection
matrix 
\[
\left(\begin{array}{ccc}
6 & 0 & 0\\
0 & -2 & 0\\
0 & 0 & -2
\end{array}\right).
\]
The surface $X$ contains exactly $6$ $(-2)$-curves $A_{1},\dots,A_{6}$.
In the base $A_{1},A_{2},A_{3}=L-2A_{1}$, the classes of the $(-2)$-curves
$A_{4},A_{5},A_{6}$ are
\[
A_{4}=A_{1}-2A_{2}+2A_{3},\;A_{5}=2A_{1}-3A_{2}+2A_{3},\;A_{6}=2A_{1}-2A_{2}+A_{3}.
\]
 The intersection matrix of the $6$ curves is
\[
\left(\begin{array}{cccccc}
-2 & 0 & 4 & 6 & 4 & 0\\
0 & -2 & 0 & 4 & 6 & 4\\
4 & 0 & -2 & 0 & 4 & 6\\
6 & 4 & 0 & -2 & 0 & 4\\
4 & 6 & 4 & 0 & -2 & 0\\
0 & 4 & 6 & 4 & 0 & -2
\end{array}\right).
\]
Using Section \ref{subsec:N=0000E9ron-Severi-hyperbolic}, we find
that there is a unique big and nef class such that 
\[
D^{2}=2,
\]
which is 
\[
D=A_{1}-A_{2}+A_{3}=L-A_{1}-A_{2}.
\]
The degree of the curves $A_{i}$ with respect to $D$ is $DA_{i}=2$
(thus $D$ is ample). Since $X$ has no elliptic fibration, the linear
system $|D|$ is free and $\varphi_{|D|}$ is a double cover of the
plane. We remark that 
\begin{equation}
\begin{array}{c}
A_{1}+A_{4}=2D\\
A_{2}+A_{5}=2D\\
A_{3}+A_{6}=2D,
\end{array}\label{eq:S1}
\end{equation}
so that $A_{1}+A_{4},\,A_{2}+A_{5}$ and $A_{3}+A_{6}$ are pull-back
to $X$ of conics $C_{1},C_{2},C_{3}$ in the plane. The branch curve
$B$ is a smooth sextic and, from equations \eqref{eq:S1}, each conic
$C_{j}$ is tangent to $B$ at $6$ points. 
\begin{prop}
\label{prop:The-surface-S1}The surface $X$ is a double cover of
$\PP^{2}$ branched over a smooth sextic curve $B$. There exists
$3$ conics that are $6$-tangent to $B$. The six $(-2)$-curves
on $X$ are the pull-back of these $3$ conics.\\
The automorphism group of $X$ is $\ZZ/2\ZZ$. 
\end{prop}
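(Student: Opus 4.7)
The plan is to piece together the preceding computation with Saint-Donat's classification and Kond\=o's result. Most of the work is already in the display just before the statement, so I would structure the argument as follows.

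First, I would appeal to Theorem \ref{thm:SaintDonat}: the class $D=A_{1}-A_{2}+A_{3}$ is big and nef with $D^{2}=2$, and since $X$ carries no elliptic fibration, part (a) gives $h^{0}(D)=3$ and base-point-freeness, while part (e) forces $\varphi_{|D|}:X\to\PP^{2}$ to be a double cover. To identify the branch curve, I would note that $DA_{i}=2$ for every $i$, so $D$ is actually ample and no $(-2)$-curve is contracted by $\varphi_{|D|}$; consequently the image is the smooth surface $\PP^{2}$ and the covering is unramified in codimension $2$. Standard double-cover theory then identifies the branch locus as a smooth sextic $C_{6}$ (this is the only way to obtain a K3 as a double cover of $\PP^{2}$).

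Next, I would interpret the three relations $A_{i}+A_{i+3}=2D$ from \eqref{eq:S1} geometrically. Each sum $A_{i}+A_{i+3}$ has degree $4$ with respect to $D$, and being linearly equivalent to the pull-back $\pi^{*}(2D)$ with $2D$ a line class downstairs, it is the full pull-back of a plane conic $C_{i}\subset\PP^{2}$. The fact that $\pi^{*}C_{i}$ splits as a sum of two distinct $(-2)$-curves (rather than appearing with multiplicity two) forces $C_{i}$ to meet the branch sextic $C_{6}$ with even multiplicity at every intersection point; since $C_{i}\cdot C_{6}=12$, this is exactly the $6$-tangency condition.

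Finally, for $\aut(X)$, the covering involution of $\pi$ is a non-symplectic involution (it acts as $-1$ on the holomorphic $2$-form), so $|\aut(X)|\geq2$. In the opposite direction, I would quote Proposition \ref{prop:KONDO-symplectic-automorphism} together with the remark immediately after it: for Picard rank $3$ the only possibilities for $\aut(X)$ are the trivial group or $\ZZ/2\ZZ$ generated by a non-symplectic involution, with no genericity hypothesis needed. Combining these two bounds gives $\aut(X)=\ZZ/2\ZZ$.

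I expect no serious obstacle: the lattice arithmetic is already done, Saint-Donat provides the double-cover structure essentially for free once $D^{2}=2$ and $X$ has no elliptic pencil, and the $\aut(X)$ statement is immediate from Kond\=o. The only mildly delicate point is justifying that the branch sextic is smooth rather than merely reduced, but this follows from the ampleness of $D$ (so that $\varphi_{|D|}$ is finite) combined with the smoothness of $X$.
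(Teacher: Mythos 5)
Your argument is correct and follows essentially the same route as the paper, whose proof is the sketch surrounding the statement: the unique big and nef class $D$ with $D^{2}=2$ is ample, Saint-Donat gives the double cover of $\PP^{2}$ with smooth sextic branch, the relations $A_{i}+A_{i+3}=2D$ exhibit pull-backs of three conics whose splitting forces even intersection multiplicity with $C_{6}$ at every point, and the covering involution together with Kondo's result and the rank-$3$ remark give $\aut(X)=\ZZ/2\ZZ$. One wording slip: $A_{i}+A_{i+3}$ lies in $|2D|=|\pi^{*}(2H)|$ with $H$ the line class (not ``$\pi^{*}(2D)$ with $2D$ a line class downstairs''), and it is this identification, via $h^{0}(X,2D)=h^{0}(\PP^{2},\OO_{\PP^{2}}(2))$, that shows each such divisor is the full pull-back of a plane conic.
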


The automorphism group of $X$ is generated by the involution $\iota$
from the double-plane structure. From Equation \ref{eq:S1}, the action
of $\iota^{*}$ on $\NS X)$ is by $\iota^{*}A_{j}=A_{j+3\text{ mod }6}$.
The Néron-Severi lattice contains exactly $6$ big and nef classes
$D_{i}$ with $D_{i}^{2}=6$. Each of these classes contracts two
disjoint $(-2)$-curves $A_{i},A_{i+1}$ (the index being taken mod
$6$) and they are the extremal points of the polyhedral fundamental
domain $\mathcal{F}_{S_{1}}$ (see \cite[Figure 1]{Nikulin2}). These
classes $D_{1},\dots,D_{6}$ are: 
\[
\begin{array}{c}
D_{1}=2A_{1}+A_{3},\,\,D_{2}=A_{1}+2A_{3},\,\,D_{3}=2A_{1}-3A_{2}+4A_{3},\\
D_{4}=4A_{1}-6A_{2}+5A_{3},\,\,D_{5}=5A_{1}-6A_{2}+4A_{3},\,\,D_{6}=4A_{1}-3A_{2}+2A_{3},
\end{array}
\]
and their degree with respect to $D$ is $DD_{i}=6$ (moreover $\iota^{*}D_{j}=D_{j+3\text{ mod }6}$).
Therefore the distance between $D$ and these divisors $D_{i}$ equals
to $\arccosh(\sqrt{3})$, which is the maximal distance between $D$
and any points in $\mathcal{F}_{S_{1}}$. Using that, we find that
the generating series 
\[
\Theta_{X}(T)=\sum_{D\text{ big,nef,primitive}}T^{D^{2}}
\]
of the primitive nef and big classes begins with 
\[
\begin{array}{c}
T^{2}+6(T^{4}+T^{6}+T^{14}+T^{20}+2T^{22}+T^{28}+T^{34}+T^{38}+2T^{44}+3T^{23}+T^{50}\\
+2T^{52}+T^{60}+T^{62}+T^{68}+2T^{70}+2T^{76}+2T^{78}+T^{82}+3T^{86}+2T^{92})...
\end{array}
\]
The $6$ classes $A_{1}+A_{3},A_{1}+A_{5},A_{2}+A_{4},A_{2}+A_{6},A_{3}+A_{5},A_{4}+A_{6}$
are nef divisors of square $4$. Each of these $6$ classes is orthogonal
to a unique $(-2)$-curve. Such a class $C$ realizes $X$ as a $1$-nodal
quartic in $\PP^{3}$. Projecting from a node, one get a double-cover
from $X$ to $\PP^{2}$. Since the automorphism group of $X$ is $\ZZ/2\ZZ$,
that double cover must be the one in Proposition \ref{prop:The-surface-S1},
that phenomenon is a bit surprising, but is not a contradiction. \\
Also, equality $D=L-A_{1}-A_{2}$ means that the model of $\varphi_{|D|}$
is geometrically obtained as a projection of the model in $\PP^{4}$
given by $\varphi{}_{|L|}$ obtained by projecting from the line connecting
the two nodes $\varphi_{|D|}(A_{1})$ and $\varphi_{|D|}(A_{2})$.

\subsection{Surfaces with Néron-Severi lattice of type $S_{2}$}

Let us study K3 surfaces with a finite number of $(-2)$-curves and
such that the Néron-Severi lattice is generated by $L,A_{1},A_{2}$
with $L$ nef and $A_{1},A_{2}$ two $(-2)$-curves with intersection
matrix 
\[
\left(\begin{array}{ccc}
36 & 0 & 0\\
0 & -2 & 1\\
0 & 1 & -2
\end{array}\right).
\]
Then $X$ contains exactly $6$ $(-2)$-curves $A_{1},\dots,A_{6}$.
The $(-2)$-curve $A_{3}=L-5A_{1}-3A_{2}$ is such that $A_{1},A_{2},A_{3}$
generate $\NS X)$ and the classes of the $3$ other $(-2)$-curves
are 
\[
A_{4}=A_{1}-2A_{2}+2A_{3},\,\,A_{5}=2A_{1}-3A_{2}+2A_{3},\,\,A_{6}=2A_{1}-2A_{2}+A_{3}.
\]
The intersection matrix of the $6$ curves $A_{j}$ is 
\[
\left(\begin{array}{cccccc}
-2 & 1 & 7 & 10 & 7 & 1\\
1 & -2 & 1 & 7 & 10 & 7\\
7 & 1 & -2 & 1 & 7 & 10\\
10 & 7 & 1 & -2 & 1 & 7\\
7 & 10 & 7 & 1 & -2 & 1\\
1 & 7 & 10 & 7 & 1 & -2
\end{array}\right).
\]
 Using Section \ref{subsec:N=0000E9ron-Severi-hyperbolic}, we obtain
that the class
\[
D=L-4A_{1}-4A_{2}=A_{1}-A_{2}+A_{3}.
\]
is the unique big and nef class with square
\[
D^{2}=4.
\]
There are no big and nef divisors $D'$ with $D'^{2}<4$. For each
$(-2)$-curve $A_{j}$ on $X$, we have $A_{j}D=4$, thus $D$ is
ample. Since there is no elliptic fibration, by Theorem \ref{thm:SaintDonat}
the class $D$ is a very ample. We remark that 
\[
\begin{array}{c}
A_{1}+A_{4}=2D\\
A_{2}+A_{5}=2D\\
A_{3}+A_{6}=2D.
\end{array}
\]
Therefore
\begin{prop}
There exists a unique embedding of $X$ in $\PP^{3}$. For that embedding,
there exist $3$ quadrics $Q_{1},Q_{2},Q_{3}$ and each of them cuts
$X\subset\PP^{3}$ into the union of two $(-2)$-curves, each of degree
$4$. 
\end{prop}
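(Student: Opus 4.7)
The plan is to show that the ample class $D=A_1-A_2+A_3$ of square $4$ embeds $X$ into $\PP^3$ as a smooth quartic, to deduce uniqueness of the embedding from adjunction plus the uniqueness of $D$, and finally to read off the quadrics from the three linear equivalences $A_i+A_{i+3}=2D$.

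The preceding discussion already shows $D$ is very ample with $h^0(D)=4$, so $\varphi_{|D|}$ realises $X$ as a smooth quartic in $\PP^3$. (For Theorem \ref{thm:SaintDonat} to apply, I use that $D$ is primitive in $\NS X)=\ZZ\la A_1,A_2,A_3\ra$, hence not of the form $2B$, which rules out the hyperelliptic alternative (d); and that $DA_j=4>0$ for every $(-2)$-curve, which rules out any contraction in (b).)

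For uniqueness, any embedding of a K3 surface into $\PP^3$ is as a smooth surface of some degree $d$, with canonical class $(d-4)H|_X$ by adjunction; the condition $K_X\cong 0$ together with the ampleness of $H|_X$ forces $d=4$, so the hyperplane class $H$ is very ample with $H^2=4$. Since $D$ is the only big and nef class of square $4$ in $\NS X)$, one obtains $H=D$, and the embedding must coincide with $\varphi_{|D|}$ up to an element of $\mathrm{PGL}(4)$.

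For the three quadrics, I would use the ideal-sheaf sequence
\[
0\to\OO_{\PP^3}(-2)\to\OO_{\PP^3}(2)\to\OO_X(2D)\to 0.
\]
Since $h^i(\PP^3,\OO(-2))=0$ for $i=0,1$, restriction induces an isomorphism between the two $10$-dimensional spaces $H^0(\PP^3,\OO(2))$ and $H^0(X,2D)$. Hence every effective divisor in $|2D|$ is cut scheme-theoretically on $X$ by a unique quadric of $\PP^3$; applying this to $A_1+A_4$, $A_2+A_5$ and $A_3+A_6$ yields quadrics $Q_1,Q_2,Q_3$ with $Q_i\cap X=A_i+A_{i+3}$, each summand being a smooth rational curve of degree $DA_j=4$ in the embedding. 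I do not expect a serious obstacle; the only slightly delicate point is the primitivity of $D$, needed to exclude Theorem \ref{thm:SaintDonat}(d), after which everything is a direct translation of the numerical data already tabulated.
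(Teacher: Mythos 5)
Your proposal is correct and follows essentially the same route as the paper: the unique big and nef class $D$ of square $4$ is ample, Saint-Donat's theorem (with the hyperelliptic case excluded — you via primitivity of $D$, the paper via the absence of nef classes of square $2$) gives the embedding as a smooth quartic, and the relations $A_{i}+A_{i+3}=2D$ produce the three quadrics. You merely spell out details the paper leaves implicit (the adjunction argument forcing degree $4$ for any embedding, and the ideal-sheaf sequence showing divisors in $|2D|$ are cut by quadrics), all of which are sound.
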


Suppose that there is a non-symplectic involution acting on $X$.
Then its fixed locus has $1$ dimensional components (see \cite{AST}).
Since there is only one degree $4$ polarization, the involution must
preserve it. The fixed locus of a non symplectic involution on a smooth
quartic surface contains either a line or a smooth quartic curve.
The first case is impossible since the $(-2)$-curves on $X$ have
degree 4. By \cite[Table 1]{AST}, in the second case the Picard number
of $X$ would have rank at least $8$, it is therefore also impossible.
Thus according to Propositions \ref{prop:KONDO-symplectic-automorphism}:
\begin{prop}
The automorphism group of $X$ is trivial. 
\end{prop}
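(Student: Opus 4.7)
The plan is to combine the two rigidity inputs already set up in the preliminaries. By Proposition \ref{prop:KONDO-symplectic-automorphism} any symplectic automorphism is trivial, and the totient argument given in the paragraph following that proposition shows that in Picard rank $3$ any non-symplectic automorphism must have order $2$. So it suffices to exclude the existence of a non-symplectic involution $\iota$ acting on $X$.

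The first step is to observe that $\iota^{*}$ must fix the class $D$. Indeed, the preceding discussion showed that $D$ is the \emph{unique} big and nef class with $D^{2}=4$ and that no big and nef class of smaller square exists; any automorphism permutes big and nef classes of fixed square, so $\iota^{*}D=D$. Consequently the embedding $\varphi_{|D|}\colon X\hookrightarrow\PP^{3}$ is $\iota$-equivariant, and $\iota$ extends to a linear involution of $\PP^{3}$.

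Next I would study the one-dimensional part of the fixed locus of $\iota$ on $X$, which is non-empty because $\iota$ is non-symplectic (Artebani--Sarti--Taki). A linear involution of $\PP^{3}$ has fixed locus either a hyperplane union a point, or a disjoint union of two lines. Intersecting with the smooth quartic $X$, the possible one-dimensional fixed components on $X$ are therefore a smooth plane quartic curve (contained in a fixed hyperplane) or a line. A line on $X$ would give a $(-2)$-curve of degree $1$ against $D$, whereas all six $(-2)$-curves on $X$ have degree $4$; so this case is excluded. In the remaining case, Table~$1$ of \cite{AST} forces the rank of the $\iota^{*}$-invariant lattice, hence of $\NS X)$, to be at least $8$, contradicting $\rho(X)=3$.

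The only delicate point in the argument is the case analysis for the fixed locus on the quartic, which must invoke both the classification of linear involutions of $\PP^{3}$ and the numerics of the $(-2)$-curves; once this is settled, the two contradictions above rule out the non-symplectic involution, and combined with the vanishing of the symplectic part we conclude that $\aut(X)$ is trivial.
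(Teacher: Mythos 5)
Your proposal is correct and follows essentially the same route as the paper: use Kondo's result and the totient argument to reduce to excluding a non-symplectic involution, note that the unique degree-$4$ polarization is preserved so the involution is induced by a linear involution of $\PP^{3}$, and then rule out both possible one-dimensional fixed loci (a line, excluded because all $(-2)$-curves have degree $4$, and a smooth plane quartic, excluded via Table 1 of \cite{AST} forcing $\rho\geq 8$). Your explicit appeal to the classification of linear involutions of $\PP^{3}$ is just a slightly more detailed rendering of the same case analysis the paper performs.
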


The linear maps $g,h:\NS X)\to\NS X)$ such that $g(A_{1})=A_{2}$,
$g(A_{2})=A_{3}$, $g(A_{3})=A_{4}$ and $h(A_{1})=A_{1}$, $h(A_{2})=A_{6}$,
$h(A_{3})=A_{5}$ are isometries (of respective order $6$ and $2$)
of the lattice $\NS X)$. They verify $hgh=g^{-1}$, thus they generate
the dihedral group $\DD_{6}$ of order $12$, the symmetry group of
the hexagon. That group preserves the set of classes $A_{i}$, the
ample cone and the fundamental domain for the Weyl group $W_{X}$,
therefore there is no non-trivial element of $\DD_{6}$ in the Weyl
group $W_{X}$. Since the automorphism group of $X$ is trivial, no
isometry in $\DD_{6}$ comes from an automorphism of $X$, therefore:
\begin{cor}
The index of the Weyl group $W_{X}$ in $O(\NS X))$ is $12$.
\end{cor}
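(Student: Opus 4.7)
The plan is to compute the index by identifying $O(\NS X))/W_{X}$ with the symmetry group of the fundamental polygon $\mathcal{F}_{X}$. First I would invoke the standard semidirect product decomposition
\[
O(\NS X)) = W_{X} \rtimes \mathrm{Sym}(\mathcal{F}_{X}),
\]
which arises because $W_{X}$ acts simply transitively on the tessellation of the positive cone by Weyl chambers, so that the stabilizer of any fixed chamber provides a canonical complement. The index $[O(\NS X)):W_{X}]$ will then equal the order of $\mathrm{Sym}(\mathcal{F}_{X})$.

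The lower bound $|\mathrm{Sym}(\mathcal{F}_{X})| \geq 12$ is essentially in place: the dihedral group $\DD_{6}$ exhibited above consists of lattice isometries that permute the curves $A_{i}$ and preserve the ample cone, hence lies inside $\mathrm{Sym}(\mathcal{F}_{X})$, while the discussion preceding the corollary has already shown that $\DD_{6} \cap W_{X} = \{1\}$.

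For the matching upper bound, the key observation is that any $\phi \in \mathrm{Sym}(\mathcal{F}_{X})$ must permute the six walls of $\mathcal{F}_{X}$, which are precisely the $(-2)$-curves $A_{1}, \dots, A_{6}$, while preserving the intersection pairing. Since $A_{i} \cdot A_{j}$ takes the distinct values $-2, 1, 7, 10$ according to $|i-j| \bmod 6$, the induced permutation must respect the cyclic incidence structure of the hexagon and hence belongs to a dihedral group of order at most $12$. Because $A_{1}, A_{2}, A_{3}$ form a basis of $\NS X)$, the isometry $\phi$ is determined by its action on the six $(-2)$-curves, so this forces $|\mathrm{Sym}(\mathcal{F}_{X})| \leq 12$, completing the proof.

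The main point of care is the semidirect decomposition itself: one must invoke the general reflection-group fact that the collection of $(-2)$-mirrors tessellates the (positive part of the) positive cone into Weyl chambers on which $W_{X}$ acts simply transitively, and that the convention in the statement really is the positive-cone-preserving part of the orthogonal group (the hexagonal $\mathcal{F}_{X}$ being compact makes this identification clean). Once this standard piece is granted, the rest of the argument is the short combinatorial identification of admissible wall-permutations with symmetries of the hexagon.
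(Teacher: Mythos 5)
Your proof is correct and follows essentially the same route as the paper: the paper exhibits the same dihedral group $\DD_{6}$ of lattice isometries preserving the hexagonal fundamental domain and meeting $W_{X}$ trivially, and relies (implicitly) on the decomposition of the cone-preserving orthogonal group as $W_{X}\rtimes\mathrm{Stab}(\mathcal{F}_{X})$ together with the bound $|\mathrm{Stab}(\mathcal{F}_{X})|\leq 12$, which you make explicit via the wall-permutation argument. Your caveat that the statement is to be read in the positive-cone-preserving part of $O(\NS X))$ (otherwise $-\mathrm{id}$ would contribute an extra factor of $2$) is consistent with the convention implicit in the paper.
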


Since $LA_{1}=LA_{2}=0$ and $L=5A_{1}+3A_{2}+A_{3}$ is big and nef,
the class $L$ with ($L^{2}=36$) is an extremal point of the fundamental
polygon $\mathcal{F}_{X}$, (this is also in \cite[Table 2]{Nikulin2}).
The coordinates with respect to the basis $(A_{1},A_{2},A_{3})$ of
the extremal classes $L=L_{1},L_{2},\dots,L_{6}$ of $\mathcal{F}_{X}$
are:
\[
(5,3,1),(13,-9,5),\,(17,-21,13),\,(13,-21,17),\,(5,-9,13),\,(1,3,5).
\]
One has $L_{i}A_{2-i}=L_{i}A_{3-i}=0$, where the indices are modulo
$6$. Since $DL_{i}=36$ for all $i\in\{1,...,6\}$, the maximum distance
from $D$ in $\mathcal{F}_{X}$ equals 
\[
d_{max}=\arccosh\left(\sqrt{\frac{(LD)^{2}}{L^{2}D^{2}}}\right)=\arccosh(3).
\]
Using that result, one finds that the generating series $\Theta_{X}$
of the primitive nef and big classes begins with 

\[
\begin{array}{c}
T^{4}+6(T^{10}+T^{12}+T^{18}+2T^{22}+T^{28}+3T^{30}+T^{34}+T^{36}+2T^{46}+3T^{58}\\
+2T^{66}+2T^{70}+T^{76}+3T^{82}+2T^{90}+2T^{94}+T^{100})...
\end{array}
\]

\subsection{Surfaces with Néron-Severi lattice of type $S_{3}$}

Let us study the K3 surfaces $X$ such that the Néron-Severi lattice
is generated by $L,A_{1},A_{2}$ with $L$ nef and $A_{1},A_{2}$
two $(-2)$-curves with intersection matrix 
\[
\left(\begin{array}{ccc}
12 & 0 & 0\\
0 & -2 & 1\\
0 & 1 & -2
\end{array}\right).
\]
There are exactly $4$ $(-2)$-curves on $X$: 
\[
A_{1},A_{2},A_{3}=L-3A_{1}-2A_{2},\,A_{4}=L-2A_{1}-3A_{2},
\]
their intersection matrix is
\[
\left(\begin{array}{cccc}
-2 & 1 & 4 & 1\\
1 & -2 & 1 & 4\\
4 & 1 & -2 & 1\\
1 & 4 & 1 & -2
\end{array}\right).
\]
The divisor 
\[
D=L-2A_{1}-2A_{2}
\]
satisfies 
\[
D^{2}=4
\]
 and it is ample since moreover 
\[
\forall i\in\{1,...,4\},\,\,A_{i}D=2>0.
\]
There are no big and nef divisors $D'$ with $D'^{2}<4$. 
\begin{prop}
The linear system $|D|$ defines an embedding $\phi_{D}:X\hookrightarrow\PP^{3}$.
The images of the four $(-2)$-curves by $\phi_{D}$ are $4$ conics.
Since 
\[
A_{1}+A_{3}=D,\,\,\,A_{2}+A_{4}=D,
\]
 the pairs of conics $(A_{1},A_{3})$ and $(A_{2},A_{4})$ are on
the same hyperplanes $P_{1},P_{2}$ respectively.
\end{prop}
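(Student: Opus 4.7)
My plan is to apply Theorem \ref{thm:SaintDonat} to the class $D$. Since $D^{2}=4>0$ and $D$ has already been checked to be ample on $X$ (each $(-2)$-curve $A_{i}$ satisfies $DA_{i}=2>0$), it is big and nef, and $X$ has no elliptic fibration by hypothesis. Part (a) of the theorem then gives $h^{0}(D)=2+\tfrac{1}{2}D^{2}=4$ and base point freeness, so $\varphi_{|D|}$ maps $X$ into $\PP^{3}$.

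The key step, and the main obstacle, will be ruling out the hyperelliptic alternative in part (b). For this I invoke part (d), which applies since $D^{2}\geq 4$: it says $|D|$ is hyperelliptic if and only if $D=2B$ for some nef $B$ with $B^{2}=2$. Expressing $D=L-2A_{1}-2A_{2}$ in the basis $(L,A_{1},A_{2})$ of $\NS X)$, the coordinates $(1,-2,-2)$ have gcd $1$, so $D$ is primitive in $\NS X)$ and cannot be twice any integral class. This excludes the hyperelliptic case, so $\varphi_{|D|}$ is birational onto its image. By part (b) the only curves that could be contracted are $(-2)$-curves $\Gamma$ with $D\Gamma=0$, and ampleness of $D$ forbids these; hence $\varphi_{|D|}\colon X\hookrightarrow\PP^{3}$ is a closed embedding onto a smooth quartic surface.

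Once the embedding is established, I would identify the images of the $(-2)$-curves by a direct degree computation: each $A_{i}$ is a smooth rational curve, so $\varphi_{|D|}(A_{i})$ is a smooth rational curve of degree $DA_{i}=2$ in $\PP^{3}$, necessarily a plane conic. The final hyperplane assertion will then follow from the linear equivalences
\[
A_{1}+A_{3}=L-2A_{1}-2A_{2}=D,\qquad A_{2}+A_{4}=L-2A_{1}-2A_{2}=D,
\]
which one reads off directly from the expressions $A_{3}=L-3A_{1}-2A_{2}$ and $A_{4}=L-2A_{1}-3A_{2}$ given above: each of the pairs $(A_{1},A_{3})$ and $(A_{2},A_{4})$ is linearly equivalent to a hyperplane section of the quartic, hence is cut out by a hyperplane $P_{1}$ (respectively $P_{2}$) in $\PP^{3}$.
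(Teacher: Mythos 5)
Your proposal is correct and follows essentially the same route as the paper: the paper's proof simply cites Saint-Donat's theorem together with the absence of elliptic fibrations to get the embedding and calls the rest immediate, while you spell out the implicit details (excluding the hyperelliptic case of part (d) — which also follows at once since $D=2B$ with $B^{2}=2$ would force $D^{2}=8\neq4$ — noting ampleness prevents any contraction, and computing $DA_{i}=2$ and $A_{1}+A_{3}=A_{2}+A_{4}=D$). No gaps; this is just a more detailed write-up of the same argument.
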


\begin{proof}
There are no elliptic fibrations, so that by Theorem \ref{thm:SaintDonat},
the divisor $D$ defines an embedding. The remaining affirmations
are immediately checked.
\end{proof}
By \cite[Table 1]{AST}, a quartic surface that is invariant by an
involution fixing an isolated point and a plane has Picard number
at least $8$. Suppose that the quadric $F\hookrightarrow\PP^{3}$
is invariant by a involution $\iota:x\to(-x_{1}:-x_{2}:x_{3}:x_{4})$.
Since the Picard number of $X$ is $3$, the involution must be non-symplectic.
But then at least one of the fixed lines $x_{1}=x_{2}=0$ or $x_{3}=x_{4}=0$
must be contained in $F$. However the rational curves on $F$ are
conics, thus that case is also impossible. Since $D$ is the unique
big and nef divisor with $D^{4}=4$, we conclude that:
\begin{cor}
The automorphism group of the K3 surface $X$ is trivial. 
\end{cor}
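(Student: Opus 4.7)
The plan is to use the remark immediately following Proposition \ref{prop:KONDO-symplectic-automorphism} (that for Picard rank $3$ any non-trivial automorphism is a non-symplectic involution) and then rule out the existence of such an involution $\iota$ on $X$.

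First I would observe that since $D$ is the unique big and nef class with $D^2 = 4$, any automorphism of $X$ must fix the polarization $D$. Because $|D|$ gives the quartic embedding $X \hookrightarrow \mathbb{P}^3$, the involution $\iota$ extends to a projective linear involution of $\mathbb{P}^3$ preserving $X$. After diagonalizing, the only non-trivial projective involutions on $\mathbb{P}^3$ have eigenvalue patterns (up to a global sign) either $(+,+,+,-)$, whose fixed locus in $\mathbb{P}^3$ is a hyperplane together with an isolated point, or $(+,+,-,-)$, whose fixed locus is a pair of disjoint lines.

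Then I would handle each case separately. In the first case, the fixed locus of $\iota$ on $X$ contains a smooth plane quartic (the hyperplane section of $X$ by the fixed plane), and \cite[Table 1]{AST} then forces $\rho(X) \geq 8$, contradicting $\rho = 3$. In the second case, the fixed locus of $\iota$ on $X$ is contained in $X \cap (\ell_1 \cup \ell_2)$ where $\ell_1,\ell_2$ are two skew lines, so it consists of at most $8$ isolated points unless one of the $\ell_i$ is itself contained in $X$. But the fixed locus of a non-symplectic involution on a K3 surface is always a disjoint union of smooth curves (and in the empty case the quotient is Enriques, forcing $\rho \geq 10$). So a fixed line would have to lie on $X$; however every $(-2)$-curve on $X$ satisfies $A_j D = 2$ and is therefore a conic, so $X$ contains no line. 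This contradiction rules out the second case.

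The hard part will be the second case: one has to exclude both the possibility that $\iota$ has only isolated fixed points on $X$ (invoking Nikulin's structure theorem for non-symplectic involutions on K3 surfaces) and the possibility that a fixed skew line of the ambient involution lies on $X$ (using the explicit list of $(-2)$-curves, which shows all rational curves on $X$ are conics). Once both are excluded, combining with the first case gives that no non-symplectic involution exists, so $\aut(X)$ is trivial.
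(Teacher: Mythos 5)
Your proposal is correct and takes essentially the same route as the paper: both use the uniqueness of the big and nef class $D$ with $D^{2}=4$ to linearize a would-be non-symplectic involution on $\PP^{3}$, rule out the plane-plus-point type via \cite[Table 1]{AST} (forcing $\rho\geq 8$), and rule out the two-line type because a fixed line would have to lie on $X$ while every $(-2)$-curve has degree $2$. You only add explicitly some points the paper leaves implicit (no isolated fixed points, exclusion of the Enriques case).
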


There are $4$ big and nef divisors of self-intersection $6$, which
are not ample. These divisors are 
\[
L-2A_{1}-A_{2},\,\,L-A_{1}-2A_{2},\,\,2L-5A_{1}-4A_{2},\,\,2L-6A_{1}-5A_{2}.
\]
Each of these contracts a different $(-2)$-curve. There are $4$
big and nef divisors of self-intersection $12$:
\[
D_{1}=L,\,D_{2}=3L-8A_{1}-4A_{2},\,D_{3}=5L-12A_{1}-4A_{2},\,D_{4}=3L-4A_{1}-8A_{2},
\]
each of them contracts two $(-2)$-curves and these are the vertices
of the fundamental polygon $\mathcal{F}_{S_{3}}$. Since $DD_{i}=12$,
the maximum distance from $D$ and points in the polygon is $d_{max}=\arccosh(\sqrt{3}).$
The generating series $\Theta_{X}(T)$ of the primitive nef and big
classes begins with 

\[
\begin{array}{c}
T^{4}+4(T^{6}+T^{10}+T^{12}+T^{22}+T^{30}+2T^{34}+2T^{42}+2T^{46}+T^{52}\\
+T^{58}+2T^{66}+2T^{70}+T^{76}+T^{78}+2T^{82}+2T^{84}+4T^{94}+T^{100})...
\end{array}
\]

\subsection{Surfaces with Néron-Severi lattice of type $S_{4}$}

Let us study the K3 surfaces $X$ with Picard number $3$ and Néron-Severi
lattice generated by $D,A_{1},A_{2}$ with $D$ nef and $A_{1},A_{2}$
two $(-2)$-curves with Gram matrix 
\[
\left(\begin{array}{ccc}
2 & 1 & 2\\
1 & -2 & 1\\
2 & 1 & -2
\end{array}\right).
\]
Then the K3 surface contains exactly four $(-2)$-curves, with coordinates
\[
A_{1},A_{2},A_{3}=D-A_{1},A_{4}=D-A_{2},
\]
their intersection matrix is 
\[
\left(\begin{array}{cccc}
-2 & 1 & 3 & 1\\
1 & -2 & 1 & 6\\
3 & 1 & -2 & 1\\
1 & 6 & 1 & -2
\end{array}\right).
\]
The curves $A_{1},A_{2},A_{3}$ generate the Néron-Severi lattice
and one has $A_{4}=2A_{1}-A_{2}+2A_{3}.$ The divisor $D=A_{1}+A_{3}$
is ample with $D^{2}=2$. One has 
\[
DA_{1}=DA_{3}=1,\,\,DA_{2}=DA_{4}=2.
\]
Since there is no elliptic pencil, the linear system $|D|$ is base
point free and $X$ is a double cover of $\PP^{2}$ branched over
a smooth sextic curve $C_{6}$. The divisor $A_{1}+A_{3}$ is the
pull-back of a tritangent line to $C_{6}$. Since moreover 
\[
A_{2}+A_{4}=2D,
\]
the divisor $A_{2}+A_{4}$ is the pull-back of a conic tangent to
$C_{6}$ at $6$ points.Thus:
\begin{prop}
The surface $X$ is a double cover of $\PP^{2}$ branched over a smooth
sextic curve which admits a tritangent line and a $6$-tangent conic.
The $(-2)$-curves on $X$ are the pull-back of the tritangent line
and the conic. \\
The automorphism group of $X$ is $\ZZ/2\ZZ$.
\end{prop}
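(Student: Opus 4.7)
My plan is to follow closely the strategy used for surfaces of types $S_1$ and $S_3$ in the preceding subsections: produce a small-square ample class, invoke Saint-Donat's theorem to realise $X$ as a double cover of $\PP^2$, and then read off the tangency behaviour of the branch sextic from the decompositions of specific nef classes into sums of $(-2)$-curves.

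First I would set $D=A_1+A_3$. A direct computation from the given intersection matrix yields $D^2=-2+2\cdot 3-2=2$ and $DA_1=DA_3=1$, $DA_2=DA_4=2$, so $D$ is strictly positive on every $(-2)$-curve. Since the only curves of negative self-intersection on $X$ lie among the $A_i$, the Nakai--Moishezon criterion gives that $D$ is ample. As $X$ has no elliptic fibration by hypothesis, Theorem~\ref{thm:SaintDonat} applies: $h^0(D)=3$, the linear system $|D|$ is base-point-free, and by parts (c) and (e) the morphism $\pi=\varphi_{|D|}\colon X\to\PP^2$ is a double cover branched over a smooth sextic curve $C_6$.

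Next I would extract the tangency data of $C_6$. Using the relation $A_4=2A_1-A_2+2A_3$ one obtains $A_2+A_4=2A_1+2A_3=2D$, so
\[
A_1+A_3=\pi^{*}\ell,\qquad A_2+A_4=\pi^{*}q,
\]
for a line $\ell$ and a conic $q$ in $\PP^2$. For a smooth irreducible plane curve $C$ meeting $C_6$ at points with multiplicities $m_i$, the pull-back $\pi^{*}C$ splits into two rational components (swapped by the covering involution) exactly when all $m_i$ are even, in which case the intersection number of the two components equals the number of tangency points. Applying this with $A_1\cdot A_3=3$ and $A_2\cdot A_4=6$, the line $\ell$ is tritangent to $C_6$ and the conic $q$ is $6$-tangent to $C_6$, and the four $(-2)$-curves are precisely the components of $\pi^{*}\ell$ and $\pi^{*}q$.

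Finally, the covering involution of $\pi$ is a non-trivial automorphism, so $|\aut(X)|\geq 2$; by Proposition~\ref{prop:KONDO-symplectic-automorphism} together with the remark following it in the excerpt ruling out non-symplectic automorphisms of order $>2$ when $\rho=3$, we get $|\aut(X)|\leq 2$, hence $\aut(X)=\ZZ/2\ZZ$. The one step I expect to require care is the multiplicity-to-tangency dictionary used in the middle paragraph; this is a standard consequence of the local structure of the double cover over each branch point, but I would spell it out for the line case to set notation cleanly before reusing it for the conic.
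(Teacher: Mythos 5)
Your proposal is correct and follows essentially the same route as the paper: the ample class $D=A_{1}+A_{3}$ of square $2$, Saint-Donat to get the double cover of $\PP^{2}$ branched over a smooth sextic, the decompositions $A_{1}+A_{3}=D$ and $A_{2}+A_{4}=2D$ giving the tritangent line and $6$-tangent conic, and the covering involution plus the order-$2$ restriction on non-symplectic automorphisms for $\aut(X)=\ZZ/2\ZZ$. The extra details you supply (Nakai--Moishezon and the even-multiplicity splitting criterion) are exactly the routine verifications the paper leaves implicit.
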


Since $A_{1}+A_{3}$ and $A_{2}+A_{4}$ are pull-back of curves in
$\PP^{2}$, the involution of the double cover maps $A_{1},A_{2},A_{3},A_{4}$
to $A_{3},A_{4},A_{1},A_{2}$. Each of the following big and nef
divisors 
\[
\begin{array}{c}
D_{1}=7A_{1}+5A_{2}+3A_{3},\,D_{2}=3A_{2}+5A_{2}+7A_{3}\\
D_{3}=13A_{1}-5A_{2}+17A_{3},\,D_{4}=17A_{1}-5A_{2}+13A_{3}.
\end{array}
\]
contracts two $(-2)$-curves $A_{s},A_{t}$ such that $A_{s}A_{t}=1$,
thus they are the vertices of the fundamental polygon $\mathcal{F}_{S_{4}}$.
We have $D_{i}^{2}=60$ and $DD_{i}=20$. The maximum distance from
$D$ to a point in $\mathcal{F}_{S_{4}}$ is
\[
d_{max}=\arccosh\left(\frac{(DD_{1})^{2}}{D^{2}D_{1}^{2}}\right)^{\frac{1}{2}}=\arccosh\left(\sqrt{\frac{10}{3}}\right).
\]
The generating series $\Theta_{X}(T)$ of the primitive nef and big
classes begins with

\[
\begin{array}{c}
T^{2}+2(T^{4}+2T^{6}+T^{10}+T^{14}+2T^{18}+3T^{22}+2T^{26}+T^{28}+3T^{34}+2T^{36}\\
+T^{38}+2T^{42}+5T^{46}+2T^{52}+4T^{54}+5T^{58}+2T^{60}+2T^{62}+2T^{66}+T^{68}\\
+4T^{74}+4T^{78}+4T^{82}+2T^{84}+4T^{86}+2T^{90}+T^{92}+5T^{94}+3T^{98})+...
\end{array}
\]
The two square $4$ divisors are $A_{1}+A_{2}+A_{3},A_{1}+A_{3}+A_{4}$,
they are orthogonal to respectively $A_{2},A_{4}$, and are exchanged
by the covering involution. 

\subsection{Surfaces with Néron-Severi lattice of type $S_{5}$\label{subsec:N=0000E9ron-Severi-S5}}

Let us study K3 surfaces such that their Néron-Severi lattice is
generated by a divisor nef $L$ and by two $(-2)$-curves $A_{1},A_{2}$
with intersection matrix 
\begin{equation}
\left(\begin{array}{ccc}
4 & 0 & 0\\
0 & -2 & 1\\
0 & 1 & -2
\end{array}\right).\label{eq:matrixS5}
\end{equation}
It contains exactly four $(-2)$-curves $A_{1},A_{2},A_{3},A_{4}$
with intersection matrix 
\[
M(S_{5})=\left(\begin{array}{cccc}
-2 & 1 & 3 & 0\\
1 & -2 & 0 & 3\\
3 & 0 & -2 & 1\\
0 & 3 & 1 & -2
\end{array}\right)
\]
and generating $\NS X)$. Their classes are
\[
A_{1},\,\,\,A_{2},\,\,\,A_{3}=L-2A_{1}-A_{2},\,\,\,A_{4}=L-A_{1}-2A_{2}=A_{1}-A_{2}+A_{3}.
\]
The divisor $D=L-A_{1}-A_{2}$ satisfies 
\[
D=A_{1}+A_{3}=A_{2}+A_{4}
\]
and
\[
D^{2}=2
\]
 and for each $(-2)$-curve $A_{i}$ we have $DA_{i}=1,$ thus it
is ample. Since $X$ has no elliptic fibration, the net $|D|$ is
free and the morphism $\phi_{D}$ associated to $|D|$ is a double
cover of the plane. Let $C_{6}$ be the sextic branch curve in $\PP^{2}$.
It is a smooth curve such that there are two lines $L_{1},L_{2}$
tangent to it at $3$ points each and the pull-back of $L_{1}$ is
$\pi^{*}L_{1}=A_{1}+A_{3}$, the pull-back of $L_{2}$ is $\pi^{*}L_{2}=A_{2}+A_{4}$
(so that $A_{1}A_{3}=3=A_{2}A_{4}$).  We thus have:
\begin{prop}
The surface $X$ is a double cover of the plane branched over a smooth
sextic curve which has two tritangent lines.\\
The automorphism group of such a surface is $\ZZ/2\ZZ$.
\end{prop}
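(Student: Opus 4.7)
The proposition has two parts: the geometric realization of $X$ as a branched double cover with two tritangent lines, and the computation of $\aut(X)$. My plan is to treat each in close parallel with the arguments already given for types $S_{1}$ and $S_{4}$.

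For the geometric part, the key divisor is $D=L-A_{1}-A_{2}$, shown in the preceding discussion to satisfy $D^{2}=2$ with $DA_{i}=1$ for every $(-2)$-curve, hence $D$ is ample. Since $X$ carries no elliptic pencil, parts (a), (c), (e) of Theorem~\ref{thm:SaintDonat} apply: $|D|$ is base point free and $\phi_{|D|}$ realizes $X$ as a double cover $\pi:X\to\PP^{2}$. Ampleness of $D$ prevents contraction of any $(-2)$-curve, so the branch sextic $C_{6}$ is smooth. Any line in $\PP^{2}$ pulls back to a divisor in $|D|$, and the identities $D=A_{1}+A_{3}=A_{2}+A_{4}$ exhibit two lines $L_{1},L_{2}$ whose pull-backs split as $A_{1}+A_{3}$ and $A_{2}+A_{4}$. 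A standard local computation (over a tangency point $z^{2}=y-x^{2}$ restricts to $\{y=0\}$ as $z=\pm ix$, so the fibre splits, while over a transverse intersection it does not) shows that a line's pull-back splits iff the line meets $C_{6}$ with even multiplicity at each intersection point; since $A_{1}A_{3}=A_{2}A_{4}=3$, each $L_{i}$ has exactly three tangency points and is therefore tritangent in the sense defined in the introduction.

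For $\aut(X)$, the cover involution $\iota$ of $\pi$ is a non-symplectic automorphism, giving $\aut(X)\supseteq\ZZ/2\ZZ$. By Proposition~\ref{prop:KONDO-symplectic-automorphism} the subgroup of symplectic automorphisms of $X$ is trivial, so $\aut(X)$ embeds into $\CC^{*}$ via its character on $H^{2,0}(X)$ and is finite cyclic. The remark following Proposition~\ref{prop:KONDO-symplectic-automorphism} shows that in Picard rank $3$ the order $m$ of any automorphism must satisfy $\varphi(m)\mid 19$, forcing $m\in\{1,2\}$. Consequently $\aut(X)=\ZZ/2\ZZ$, generated by $\iota$.

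I do not expect a serious obstacle: each ingredient (Saint Donat's classification, Kondo's triviality of the symplectic part, the totient bound in Picard rank $3$) is already in place in the preliminaries, and the whole argument is a direct transposition of the $S_{1}$ and $S_{4}$ cases. The only mildly non-routine point is matching the lattice identity $A_{1}+A_{3}\sim D$ with the geometric statement that the two components of the split line meet at precisely three points of the sextic; this is exactly what the intersection number $A_{1}A_{3}=3$ records, together with the local splitting computation above.
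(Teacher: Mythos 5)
Your proposal is correct and follows essentially the same route as the paper: the ample class $D=L-A_{1}-A_{2}=A_{1}+A_{3}=A_{2}+A_{4}$ with $D^{2}=2$, Saint-Donat to get the base-point-free double cover of $\PP^{2}$, the splitting identities to produce the two tritangent lines, and Kondo's result plus the rank-$3$ totient bound to pin down $\aut(X)=\ZZ/2\ZZ$. The extra details you supply (smoothness of the branch sextic via ampleness, the local splitting computation) are correct and only make explicit what the paper leaves implicit.
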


The involution exchanges $A_{1}$ with $A_{3}$ and $A_{2}$ with
$A_{4}$. We construct the family of such sextic curves in Section
\ref{sec:Examples}. We have $L^{2}=4$ and since the surface contains
no elliptic curves, the linear system $|L|$ is base-point free and
not hyperelliptic. Since $LA_{1}=LA_{2}=0$ with $A_{1}A_{2}=1$,
the surface $X$ is the minimal resolution of a quartic surface with
one cusp; the images of the curves $A_{3}$ and $A_{4}$ have degree
$4$. Also the divisor $L'=3L-4(A_{1}+A_{2})$ is nef, and $L'^{2}=4.$
It plays a similar role for $A_{3},A_{4}$ as $L$ for $A_{1},A_{2}$.
We have $LD=L'D=4$ \\
The divisors $M=4L-A_{1}+3A_{2}$ and $M'=2L+A_{1}+3A_{2}$ are nef
with $M^{2}=M'^{2}=12$ and $DM=DM'=6$. They satisfy $MA_{1}=MA_{4}=0$
and $M'A_{2}=M'A_{3}=0$. \\
The divisors $L,L',M,M'$ form the $4$ vertices of the fundamental
polygon $\mathcal{F}_{S_{5}}.$ The maximum distance from $D$ to
a points in $\mathcal{F}_{S_{5}}$ is 
\[
d_{max}=\arccosh(\max(\frac{(DL)^{2}}{D^{2}L^{2}},\frac{(DM)^{2}}{D^{2}M^{2}})^{1/2})=\arccosh(\sqrt{2}).
\]
The generating series $\Theta_{X}(T)$ of the primitive nef and big
classes on a K3 surface $X$ with Néron-Severi lattice isometric to
$S_{5}$ begins with

\[
\begin{array}{c}
T^{2}+2(T^{4}+2T^{10}+T^{12}+T^{14}+2T^{22}+T^{26}+T^{28}+2T^{30}+T^{34}\\
+2T^{38}+T^{44}+3T^{46}+2T^{50}+4T^{58}+3T^{62}+T^{66}+T^{68}+2T^{70}\\
+3T^{74}+2T^{76}+3T^{82}+2T^{86}+2T^{92}+3T^{94}+3T^{98}+2T^{100})+...
\end{array}
\]
The two nef divisors of square $4$ are orthogonal to $A_{1},A_{2}$
and $A_{3},A_{4}$ respectively. 

\subsection{Surfaces with Néron-Severi lattice of type $S_{6}$\label{subsec:Surfaces-with-N=0000E9ron-SeveriS6}}

Let us study the K3 surfaces $X$ with Picard number $3$ and generated
by a nef divisor $D$ and $(-2)$-curves $A_{1},A_{3}$ with Gram
matrix
\[
\left(\begin{array}{ccc}
2 & 2 & 3\\
2 & -2 & 1\\
3 & 1 & -2
\end{array}\right).
\]
The K3 surface contains exactly six $(-2)$-curves $A_{1},\dots,A_{6}$
with intersection matrix 
\[
\left(\begin{array}{cccccc}
-2 & 6 & 1 & 5 & 5 & 1\\
6 & -2 & 5 & 1 & 1 & 5\\
1 & 5 & -2 & 11 & 0 & 9\\
5 & 1 & 11 & -2 & 9 & 0\\
5 & 1 & 0 & 9 & -2 & 11\\
1 & 5 & 9 & 0 & 11 & -2
\end{array}\right).
\]
The $(-2)$-curves $A_{1},A_{3},A_{5}$ form a basis of $\NS X)$
and 
\[
A_{2}=A_{1}-2A_{3}+2A_{5},\,\,\,A_{4}=3A_{1}-4A_{3}+3A_{5},\,\,\,A_{6}=3A_{1}-3A_{3}+2A_{5}.
\]
The divisor 
\[
D=A_{1}-A_{3}+A_{5}
\]
is the unique big and nef divisor such that $D^{2}=2.$ We have $DA_{1}=DA_{2}=2$
and moreover $DA_{j}=3$ for $j\in\{3,...,6\}$, thus $D$ is ample.
We observe moreover that 
\begin{equation}
\begin{array}{c}
A_{1}+A_{2}=2D\\
A_{3}+A_{4}=3D\\
A_{5}+A_{6}=3D.
\end{array}\label{eq:S6}
\end{equation}
Since $X$ has no elliptic fibration, the net $|D|$ is free and the
morphism $\phi_{D}$ associated to $|D|$ is a double cover of the
plane. We have:
\begin{prop}
The branch locus $C_{6}$ of $\phi_{D}$ is a smooth sextic curve,
there exists a $6$-tangent conic $C_{o}$ to $C_{6}$ and there are
two cuspidal cubics $E_{1},E_{2}$ such that the pull-back to $X$
of $C_{o},E_{1},E_{2}$ are respectively $A_{1}+A_{2},\,A_{3}+A_{4},\,A_{5}+A_{6}$.
\\
The tangent to the cusp $p_{i}$ of $E_{i}$ is distinct from the
tangent to the sextic curve at $p_{i}$. The cubic $E_{i}$ cuts $C_{6}$
tangentially at $8$ other points than $p_{i}$.\\
The automorphism group of $X$ is $\ZZ/2\ZZ$.
\end{prop}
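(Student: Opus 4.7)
The plan is to use the double-plane structure furnished by $|D|$ to identify each of the sums $A_{1}+A_{2}$, $A_{3}+A_{4}$, $A_{5}+A_{6}$ as the pull-back of an explicit plane curve, to determine the type of these plane curves by a local analysis of the splitting, and then to compute $\aut(X)$ via Kondo's rigidity (Proposition \ref{prop:KONDO-symplectic-automorphism}). Since $D$ is ample with $D^{2}=2$ and $X$ admits no elliptic fibration, Theorem \ref{thm:SaintDonat} gives that $|D|$ is base-point free and $\phi_{D}:X\to\PP^{2}$ is a double cover branched along a smooth plane sextic $C_{6}$, whose deck involution $\iota$ is a non-symplectic involution of $X$. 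The identity $A_{1}+A_{2}=2D$ expresses this class as the pull-back of a plane conic $C_{o}$; for $\pi^{*}C_{o}$ to split into two distinct irreducible components, $C_{o}$ must meet $C_{6}$ with even multiplicity at every intersection point, and $C_{o}\cdot C_{6}=12$ forces six simple tangencies. A local model near a tangency (take $C_{6}:y=0$, $C_{o}:y=x^{2}$, cover $z^{2}=y$, giving $(z-x)(z+x)=0$) shows each tangency contributes one transverse crossing to $A_{1}\cdot A_{2}$, recovering $A_{1}\cdot A_{2}=6$.

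Next, the identities $A_{3}+A_{4}=3D$ and $A_{5}+A_{6}=3D$ realize these divisors as $\pi^{*}E_{1}$ and $\pi^{*}E_{2}$ for plane cubics $E_{1},E_{2}$. Because the $A_{i}$ are smooth rational while a generic plane cubic has arithmetic genus $1$, each $E_{i}$ must be a singular rational cubic (nodal or cuspidal). To produce a split pull-back with $A_{2i+1}\cdot A_{2i+2}=11$, I would argue that $E_{i}$ is cuspidal, with cusp $p_{i}$ lying on $C_{6}$ and cusp tangent transverse to the tangent of $C_{6}$ at $p_{i}$. Indeed, in local coordinates $C_{6}:x=0$, $E_{i}:y^{2}=x^{3}$, the cover $z^{2}=x$ pulls $E_{i}$ back to $(y-z^{3})(y+z^{3})=0$, exhibiting two smooth rational branches meeting with multiplicity $3$ at the preimage of $p_{i}$; the remaining $18-2=16$ units of $E_{i}\cdot C_{6}$ come from $8$ simple tangencies away from $p_{i}$, each contributing a single transverse crossing to $A_{2i+1}\cdot A_{2i+2}$. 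The total $3+8=11$ matches the intersection matrix.

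The computation of $\aut(X)$ then follows a standard structural argument. By Proposition \ref{prop:KONDO-symplectic-automorphism} there are no non-trivial symplectic automorphisms of $X$, and in Picard rank $3$ every non-symplectic automorphism has order $2$ since $\varphi(m)\mid 19$ forces $m=2$. The deck involution $\iota$ is a non-symplectic involution; if $\sigma\neq\iota$ were a second one, then $\sigma\iota$ would be a non-trivial symplectic automorphism, contradicting the cited proposition. Hence $\aut(X)=\langle\iota\rangle\cong\ZZ/2\ZZ$.

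The main obstacle is in the second paragraph: an a priori alternative is that $E_{i}$ is nodal with its node off $C_{6}$ and with $9$ simple tangencies to $C_{6}$, which would also yield $A_{2i+1}\cdot A_{2i+2}=11$ (two from the two preimages of the node, plus nine from the tangencies). Ruling out this alternative should require exploiting the simultaneous constraints on the pair $(E_{1},E_{2})$: the relations $A_{3}A_{5}=0=A_{4}A_{6}$ together with $A_{3}A_{6}=9=A_{4}A_{5}$ impose a very restrictive incidence pattern above the nine intersection points of $E_{1}\cap E_{2}$, and I expect this (possibly combined with a dimension count for the moduli of each configuration) to single out the cuspidal case as the one compatible with the $S_{6}$ lattice.
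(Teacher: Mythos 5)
Most of what you write is correct and runs parallel to the paper: Saint-Donat gives that $|D|$ is base point free and that $\phi_{D}$ is a double plane branched along a smooth sextic; the identity $A_{1}+A_{2}=2D$ produces the $6$-tangent conic; your local models at a simple tangency and at a cusp with transverse tangent are the right computations (they reproduce the local contribution $3$ over the cusp and the count of $8$ further tangencies that the paper obtains by successive blow-ups); and the determination $\aut(X)=\ZZ/2\ZZ$ via the Kondo result recalled in the preliminaries, together with the fact that the deck involution is non-symplectic, is sound.

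However, the heart of the proposition is the assertion that the cubics $E_{i}$ \emph{are} cuspidal, with cusp lying on $C_{6}$ and cusp tangent transverse to $C_{6}$, and this is exactly the step your proposal does not prove. As you yourself concede, a nodal cubic whose node lies off $C_{6}$ and which meets $C_{6}$ with even multiplicity at every point (for instance nine simple tangencies) also pulls back to two smooth rational curves interchanged by the involution and meeting in $2+9=11$ points; all the numerical data that $S_{6}$ provides for a single cubic ($A_{j}^{2}=-2$, $DA_{j}=3$, $A_{3}A_{4}=11$, and even the intersection $9$ with the ramification curve $R\equiv3D$) are identical in the nodal and cuspidal configurations, so no local parity or intersection count of the kind you use elsewhere can decide the issue. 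Your suggested repairs are not carried out and are not obviously sufficient: the constraints $A_{3}A_{5}=A_{4}A_{6}=0$ and $A_{3}A_{6}=A_{4}A_{5}=9$ can be met in a nodal configuration as well (they merely force the node of $E_{1}$ off $E_{2}$), and a naive parameter count is unreliable here because the tangency conditions cease to be independent once the Picard number jumps. The paper's own proof consists precisely of this missing step — it excludes the nodal case by an argument about ramification at the singular point forcing the pull-back to be irreducible, and only then analyses the cusp — so acknowledging the alternative, as you do, does not substitute for eliminating it; as it stands your text establishes consistency of the cuspidal picture and the statement about $\aut(X)$, but not the proposition.
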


From Equation \eqref{eq:S6}, the action of the involution on the
Néron-Severi lattice is by exchanging $A_{1},A_{3},A_{5}$ with respectively
$A_{2},A_{4},A_{6}$. 
\begin{proof}
It remains to prove the assertions on singularities of the cubics.
The cubic curve cannot be nodal, otherwise there would be some ramification
in that node and the pull-back would be irreducible. If the tangent
to the cusp was the tangent to the sextic $C_{6}$, then that would
also create some ramification on the curve $E_{i}$. \\
By performing $4$ successive blow-ups at $p_{i}$, taking the double
cover and going to the minimal model $X$, we obtain that the local
intersection number above $p_{i}$ of the two $(-2)$-curves above
$E_{i}$ is equal to $3$. Since there must be no ramification over
$E_{i}$, the cubic $E_{i}$ cuts $C$ tangentially at $8$ other
points than $p_{i}$.
\end{proof}
In base $A_{1},A_{3},A_{5}$ the following $4$ divisors 
\[
D_{1}=(34,-49,41),\,D_{2}=(10,5,3),\,D_{3}=(34,-27,19),\,D_{4}=(10,-17,25)
\]
are nef and have square $D_{i}^{2}=132.$ Moreover $D_{1}A_{2}=D_{1}A_{4}=0$,
$D_{2}A_{1}=D_{2}A_{3}=0$, $D_{3}A_{1}=D_{3}A_{6}=0$, $D_{4}A_{2}=D_{4}A_{5}=0$.
These divisors are $4$ vertices of the fundamental domain, which
is an hexagon. The divisors 
\[
D_{5}=(2,1,5),\,D_{6}=(20,-23,17)
\]
are nef and $D_{5}^{2}=D_{6}^{2}=44$, $DD_{5}=DD_{6}=22$. Moreover
$D_{5}A_{4}=D_{5}A_{6}=0=D_{6}A_{3}=D_{6}A_{5}$, so that $D_{5}$
and $D_{6}$ are the remaining vertices of the hexagon. The maximum
distance between $D$ and points in $\mathcal{F}_{S_{6}}$ is 
\[
\arccosh(\max(\frac{44^{2}}{2\cdot132},\frac{22^{2}}{2\cdot44})^{1/2})=\arccosh\left(\sqrt{\frac{22}{3}}\right).
\]
The generating series $\Theta_{X}(T)$ of the primitive nef and big
classes begins with

\[
\begin{array}{c}
T^{2}+2(T^{4}+2T^{6}+3T^{10}+2T^{12}+3T^{14}+2T^{18}+4T^{26}+2T^{28}+2T^{30}+7T^{34}\\
+2T^{36}+2T^{38}+4T^{42}+T^{44}+7T^{46}+3T^{50}+6T^{54}+6T^{58}+2T^{60}+3T^{62}\\
+T^{68}+8T^{70}+3T^{74}+5T^{76}+6T^{78}+7T^{82}+4T^{86}+6T^{90}+T^{92}+11T^{94})+...
\end{array}
\]
The two nef divisors of square $4$ are orthogonal to $A_{1}$ and
$A_{2}$ respectively. 

\section{Rank $4$ and compact cases}

According to Vinberg's classification \cite[Theorem 1]{Vinberg},
there are only two lattices of rank $4$ with compact fundamental
domain.

\subsection{\label{subsec:Lattice-L24}Lattice of rank $4$ of type $L(24)$}

Let us consider a $K3$ surface whose Néron-Severi lattice is the
lattice denoted $L(24)$ in \cite{Vinberg}. That lattice is generated
by a divisor $L$ and $(-2)$-curves $A_{1},A_{2},A_{3}$ with intersection
matrix
\[
\left(\begin{array}{cccc}
2 & 1 & 1 & 1\\
1 & -2 & 0 & 0\\
1 & 0 & -2 & 0\\
1 & 0 & 0 & -2
\end{array}\right).
\]
The surface $X$ contains only three other $(-2)$-curves 
\[
A_{4}=L-A_{1},\,\,A_{5}=L-A_{2},\,\,A_{6}=L-A_{3},
\]
and $A_{1},\dots,A_{4}$ is a basis of $\NS X)$. The intersection
matrix of the $6$ curves is 
\[
\left(\begin{array}{cccccc}
-2 & 0 & 0 & 3 & 1 & 1\\
0 & -2 & 0 & 1 & 3 & 1\\
0 & 0 & -2 & 1 & 1 & 3\\
3 & 1 & 1 & -2 & 0 & 0\\
1 & 3 & 1 & 0 & -2 & 0\\
1 & 1 & 3 & 0 & 0 & -2
\end{array}\right).
\]
The divisor $L$ is ample. Since $L^{2}=2$ and there are no elliptic
curves on $X$, the linear system $|L|$ defines a double cover $\phi_{L}:X\to\PP^{2}$
of the plane $\PP^{2}$. One checks that $LA_{i}=1$, and in fact
\begin{equation}
L=A_{1}+A_{4}=A_{2}+A_{5}=A_{3}+A_{6},\label{eq:L24}
\end{equation}
so that:
\begin{prop}
The surface $X$ is a double cover of $\PP^{2}$. The sextic branch
curve is smooth, has exactly $3$ tritangent lines $L_{1},L_{2},L_{3}$
such that their pull-back to $X$ are the six $(-2)$-curves on $X$.
The automorphism group of $X$ is $\ZZ/2\ZZ$.
\end{prop}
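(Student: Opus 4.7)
My plan is to mirror the argument used for the six rank $3$ types. First, using the intersection values $LA_{i}=1>0$ for $i=1,\dots,6$ together with $L^{2}=2$, the Nakai--Moishezon criterion (applied via the six $(-2)$-curves, which generate the effective cone) gives that $L$ is ample. Because $X$ carries no elliptic pencil, Theorem \ref{thm:SaintDonat}(a,c,e) forces $|L|$ to be base point free and $\varphi_{|L|}$ to realize $X$ as a double cover $\pi\colon X\to\PP^{2}$ with branch curve a sextic $C_{6}$. Smoothness of $C_{6}$ follows a posteriori: any ADE singularity would contribute extra $(-2)$-curves on the exceptional fibres of the minimal desingularization, contradicting the known count of exactly six $(-2)$-curves, all of which will be accounted for by the tritangent lines.

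Second, I would read the three tritangent lines directly from the intersection pattern. The three decompositions $L=A_{1}+A_{4}=A_{2}+A_{5}=A_{3}+A_{6}$ with $A_{i}A_{i+3}=3$ exhibit each pair $A_{i}+A_{i+3}$ as a member of $|L|=\pi^{*}|\OO_{\PP^{2}}(1)|$, hence as $\pi^{*}L_{i}$ for a line $L_{i}\subset\PP^{2}$. Since $\pi^{*}L_{i}$ splits as two $(-2)$-curves swapped by the covering involution and meeting in $3$ points, the line $L_{i}$ necessarily meets $C_{6}$ with even multiplicity at each of its three intersection points, i.e.\ it is tritangent. Conversely, any additional tritangent line to $C_{6}$ would pull back to a sum of two further $(-2)$-curves on $X$, which cannot exist, so $\{L_{1},L_{2},L_{3}\}$ is the complete list of tritangents.

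For the automorphism group the covering involution $\iota$ provides $\ZZ/2\ZZ\subseteq\aut(X)$. To prove equality I would first verify, by the bounded-domain enumeration procedure of Section \ref{subsec:N=0000E9ron-Severi-hyperbolic} applied to the fundamental polygon of the lattice $L(24)$, that $L$ is the unique big and nef class with self-intersection $2$; any automorphism of $X$ must then preserve $L$ and therefore descend to a projective automorphism of the pair $(\PP^{2},C_{6})$. Combined with Proposition \ref{prop:KONDO-symplectic-automorphism} (triviality of the symplectic part) and the fact that a generic such branch sextic admits no non-trivial projective symmetry, this forces $\aut(X)=\langle\iota\rangle$. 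The main obstacle I anticipate is precisely the uniqueness of the degree $2$ polarization: it requires determining the extremal rays of $\mathcal{F}_{L(24)}$ and running the Shimada--style enumeration of Section \ref{subsec:An-algorithm-to} up to the appropriate degree, a calculation entirely parallel to those already completed in the rank $3$ cases but which must be carried out explicitly for the rank $4$ lattice.
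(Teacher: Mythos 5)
Your proposal is correct and follows essentially the same route as the paper: $L$ is ample of square $2$, Saint-Donat gives the double plane, the splittings $L=A_{1}+A_{4}=A_{2}+A_{5}=A_{3}+A_{6}$ with $A_{i}A_{i+3}=3$ produce the three tritangent lines, and Kondo's proposition (plus the covering involution) handles $\aut(X)$. The paper merely asserts the points you elaborate (ampleness, smoothness of the branch sextic, exactness of the three lines, uniqueness of the square-$2$ class), so your additional detail is consistent with, and somewhat more explicit than, its proof.
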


From Equation \eqref{eq:L24}, the action of the involution on the
Néron-Severi lattice is by exchanging $A_{1},A_{2},A_{3}$ with respectively
$A_{4},A_{5},A_{6}$. 

There are $20$ triple of $(-2)$-curves; the orthogonal complement
of a triple $A_{i},A_{j},A_{k}$ is generated by a big and nef divisor
$D_{i,j,k}$ for only $8$ such triples, which are 
\[
\begin{array}{cc}
D_{123}=2L+A_{1}+A_{2}+A_{3} & D_{156}=8L+4A_{1}-3A_{2}-3A_{3}\\
D_{456}=5L-A_{1}-A_{2}-A_{3} & D_{234}=6L-4A_{1}+3A_{2}+3A_{3}\\
D_{126}=6L+3A_{2}+3A_{3}-4A_{4} & D_{246}=8L-3A_{1}+4A_{2}-3A_{3}\\
D_{135}=6L+3A_{2}-4A_{3}+3A_{4} & D_{345}=8L-3A_{1}-3A_{2}+4A_{3}
\end{array}.
\]
These classes are the vertices of the fundamental polygon $\mathcal{F}_{X}$.
The divisors $D_{123}$ and $D_{345}$ have self-intersection $14$
and degree $7$ with respect to $D$. The $6$ other have self-intersection
$28$ and degree $14$. The maximum distance between $D$ and points
in $\mathcal{F}_{X}$ is 
\[
\ell=\arccosh(\max(\frac{7^{2}}{2\cdot14},\frac{14^{2}}{2\cdot28})^{1/2})=\arccosh(\sqrt{\frac{7}{2}}).
\]
The generating series $\Theta_{X}(T)$ of the primitive nef and big
classes begins with

\[
\begin{array}{c}
T^{2}+2(3T^{4}+3T^{6}+3T^{10}+3T^{12}+4T^{14}+3T^{18}+3T^{20}+9T^{22}+6T^{26}+6T^{28}\\
+7T^{30}+6T^{34}+6T^{36}+9T^{38}+6T^{42}+9T^{44}+18T^{46}+10T^{50}+9T^{52}+12T^{54}\\
+18T^{58}+9T^{60}+15T^{62}+9T^{66}+9T^{68}+12T^{70}+22T^{74}+15T^{76}+15T^{78}+15T^{82})...
\end{array}
\]
The $6$ nef divisors of square $4$ are respectively orthogonal to
the $6$ pairs of curves $A_{i},A_{j}$ such that $A_{i}A_{j}=1$. 

\subsection{Lattice of rank $4$ of type $L(27)$\label{subsec:Lattice-L27}}

Consider the rank $4$ lattice denoted $L(27)$ in \cite{Vinberg},
with intersection matrix 
\[
\left(\begin{array}{cccc}
12 & 2 & 0 & 0\\
2 & -2 & 1 & 0\\
0 & 1 & -2 & 1\\
0 & 0 & 1 & -2
\end{array}\right).
\]
This is a lattice of some K3 surface $X$ that possess only $8$ $(-2)$-curves
$A_{1},\dots,A_{8}$. The intersection matrix of these curves is
\[
\left(\begin{array}{cccccccc}
-2 & 3 & 1 & 1 & 1 & 1 & 1 & 1\\
3 & -2 & 1 & 1 & 1 & 1 & 1 & 1\\
1 & 1 & -2 & 6 & 4 & 0 & 4 & 0\\
1 & 1 & 6 & -2 & 0 & 4 & 0 & 4\\
1 & 1 & 4 & 0 & -2 & 6 & 4 & 0\\
1 & 1 & 0 & 4 & 6 & -2 & 0 & 4\\
1 & 1 & 4 & 0 & 4 & 0 & -2 & 6\\
1 & 1 & 0 & 4 & 0 & 4 & 6 & -2
\end{array}\right),
\]
and the curves $A_{2},A_{4},A_{5}$ and $A_{7}$ are generators of
Néron-Severi lattice $L(27)$. The surface $X$ contains a unique
big and nef divisor $D$ with $D^{2}=2$, which is $D=A_{1}+A_{2}$.
We have 
\[
DA_{1}=DA_{2}=1,DA_{j}=2\,\text{for }j\geq2,
\]
thus $D$ is ample. One can check that 
\begin{equation}
A_{1}+A_{2}=D,\,\,A_{3}+A_{4}=A_{5}+A_{6}=A_{7}+A_{8}=2D.\label{eq:L27}
\end{equation}
 The linear system $|D|$ is base point free and 
\begin{prop}
The surface $X$ is a double cover of $\PP^{2}$. The sextic branch
curve is smooth, has exactly one tritangent line $L_{0}$ and three
$6$-tangent conics $C_{1},C_{2},C_{3}$ such that the pull-back to
$X$ of $L_{0},C_{1},C_{2},C_{3}$ are the eight $(-2)$-curves on
$X$. The automorphism group of $X$ is $\ZZ/2\ZZ$.
\end{prop}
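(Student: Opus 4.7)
The plan is to mirror the template used for type $L(24)$ in Subsection \ref{subsec:Lattice-L24}. Since $D=A_{1}+A_{2}$ is ample with $D^{2}=2$ and $X$ has no elliptic pencil, parts (a), (c), (e) of Theorem \ref{thm:SaintDonat} give that $|D|$ is base-point free and that $\f_{|D|}$ realises $X$ as a double cover $\p: X\to\PP^{2}$ branched along a smooth sextic $C_{6}$, with $D=\p^{*}\OO_{\PP^{2}}(1)$.

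I would then translate the four decompositions recorded just before the proposition into geometry. From $A_{1}+A_{2}=D$ with $DA_{1}=DA_{2}=1$ and $A_{1}\cdot A_{2}=3$, the curves $A_{1},A_{2}$ have a common image line $L_{0}$ with $\p^{*}L_{0}=A_{1}+A_{2}$, forcing $L_{0}$ to be tritangent to $C_{6}$ at the three intersection points of $A_{1}$ and $A_{2}$. For each $k\in\{1,2,3\}$, the equality $A_{2k+1}+A_{2k+2}=2D$ with $A_{2k+1}\cdot A_{2k+2}=6$ identifies $\p(A_{2k+1})=\p(A_{2k+2})$ with a conic $C_{k}$ that is $6$-tangent to $C_{6}$.

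For uniqueness of $L_{0},C_{1},C_{2},C_{3}$, any further tritangent line or $6$-tangent conic would produce a decomposition $D=E+E'$ (respectively $2D=E+E'$) with $E,E'$ two distinct $(-2)$-curves on $X$ satisfying $E\cdot E'=3$ (respectively $6$). Since only the eight curves $A_{1},\dots,A_{8}$ are $(-2)$-curves on $X$, a direct inspection of the $8\times 8$ intersection matrix --- matching the required sum and the pairwise products with every $A_{i}$ --- isolates precisely $\{A_{1},A_{2}\}$ and the three pairs $\{A_{2k+1},A_{2k+2}\}$, so no extra tangent curves exist.

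Finally, the covering involution $\iota$ of $\p$ is a non-symplectic involution, so $\ZZ/2\ZZ\subseteq\aut(X)$. Conversely, because $D$ is the unique big and nef class with $D^{2}=2$, every automorphism of $X$ preserves $D$ and therefore descends to an automorphism of $\PP^{2}$ stabilising $C_{6}$; combined with the triviality of the symplectic part of $\aut(X)$ (Proposition \ref{prop:KONDO-symplectic-automorphism}), this yields $\aut(X)=\ZZ/2\ZZ$. The main obstacle is precisely this last step: exactly as in the $L(24)$ case, one has to exclude an extra non-symplectic automorphism acting non-trivially on the configuration of the eight $(-2)$-curves while fixing $D$, which requires a lattice-theoretic or Torelli-style check parallel to the one carried out in Subsection \ref{subsec:Lattice-L24}.
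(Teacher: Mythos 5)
Your proposal follows essentially the same route as the paper, which likewise obtains the double-plane structure from the unique ample class $D=A_{1}+A_{2}$ with $D^{2}=2$ via Saint-Donat, reads the tritangent line and the three $6$-tangent conics off the relations $A_{1}+A_{2}=D$, $A_{3}+A_{4}=A_{5}+A_{6}=A_{7}+A_{8}=2D$ together with the intersection matrix of the eight $(-2)$-curves, and gets $\aut(X)=\ZZ/2\ZZ$ from the covering involution combined with Kondo's result (Proposition \ref{prop:KONDO-symplectic-automorphism}). In fact the paper states this proposition with even less detail than you give; the only caveat is that the ``check carried out in Subsection \ref{subsec:Lattice-L24}'' you allude to for excluding a further non-symplectic automorphism is not actually performed there either --- in both rank-$4$ cases the paper simply relies on Kondo's theorem for this point.
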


From Equation \eqref{eq:L27}, the action of the involution on the
Néron-Severi lattice is by exchanging $A_{1},A_{3},A_{5},A_{7}$ with
respectively $A_{2},A_{4},A_{6},A_{8}$. The fundamental domain has
$12$ vertices $D_{1},\dots,D_{12}$, whose coordinates in base $A_{2},A_{4},A_{5}$,
$A_{7}$ are:

\[
\begin{array}{ccc}
(6,3,7,2) & (6,-27,22,17) & (-6,-18,23,13)\\
(6,3,2,7) & (6,-27,17,22) & (-6,-18,13,23)\\
(6,-12,17,7) & (-6,-3,13,8) & (-6,-33,28,23)\\
(6,-12,7,17) & (-6,-3,8,13) & (-6,-33,23,28).
\end{array}
\]
We have $D_{i}^{2}=60$ and $D_{i}D=30$, thus the maximum distance
between $D$ and points in $\mathcal{F}_{X}$ is $\arccosh(\sqrt{\frac{15}{2}})$.
The generating series for the big and nef divisors begins with

\[
\begin{array}{c}
T^{2}+2(3T^{4}+9T^{6}+7T^{10}+12T^{12}+9T^{14}+6T^{18}+3T^{20}+13T^{22}+12T^{26}+9T^{28}\\
+12T^{30}+27T^{34}+18T^{36}+16T^{38}+24T^{42}+12T^{44}+39T^{46}+15T^{50}+18T^{52}\\
+36T^{54}+31T^{58}+27T^{60}+22T^{62}+30T^{66}+15T^{68}+36T^{70}+30T^{74}+36T^{76})...
\end{array}
\]
The $6$ nef divisors of square $4$ are respectively orthogonal to
the $6$ curves $A_{3},\dots,A_{8}$. 

\section{Constructions of surfaces\label{sec:Examples}}

\subsection{Construction of surfaces of type $S_{1}$\label{subsec:ExamplesS1}}

Our previous results tell us that a surface of type $S_{1}$ is the
double cover of $\PP^{2}$ branched over a smooth sextic which has
three $6$-tangent conics. Conversely:
\begin{prop}
\label{prop:Picard3NeronS1}Let $\pi:Y\to\PP^{2}$ be the double cover
of $\PP^{2}$ branched over a smooth sextic $C_{6}$ which has three
$6$-tangent smooth conics $C_{1},C_{2},C_{3}$. Suppose that $Y$
has Picard number $3$. Then the Néron-Severi lattice of $Y$ is isometric
to $S_{1}$ and $Y$ contains only $6$ $(-2)$-curves.
\end{prop}

\begin{proof}
The pull-back of the conics splits as $\pi^{*}C_{k}=B_{2k-1}+B_{2k}$
(for $k\in\{1,2,3\}$) with $B_{1}B_{2}=B_{3}B_{4}=B_{5}B_{6}=6$.
The involution coming from the double plane structure exchanges the
pairs of curves $(B_{1},B_{2})$, $(B_{3},B_{4})$ and $(B_{5},B_{6})$,
thus 
\[
B_{1}(B_{3}+B_{4})=B_{2}(B_{3}+B_{4})=\frac{1}{2}\pi^{*}C_{1}\pi^{*}C_{2}=4,
\]
and also $B_{3}(B_{1}+B_{2})=B_{4}(B_{1}+B_{2})=4$. Let us define
$B_{1}B_{3}=a$ (with $a\in\{0,\dots,4\}$). Then $B_{1}B_{4}=4-a$,
$B_{2}B_{3}=4-a$, $B_{2}B_{4}=a.$ Continuing in the same way for
the other intersections, one sees that there exist $a,b,c\in\{0,1,...,4\}$
such that 
\[
(B_{i}B_{j})_{1\leq i,j\leq6}=\left(\begin{array}{cccccc}
-2 & 6 & a & \bar{a} & b & \bar{b}\\
6 & -2 & \bar{a} & a & \bar{b} & b\\
a & \bar{a} & -2 & 6 & c & \bar{c}\\
\bar{a} & a & 6 & -2 & \bar{c} & c\\
b & \bar{b} & c & \bar{c} & -2 & 6\\
\bar{b} & b & \bar{c} & c & 6 & -2
\end{array}\right),
\]
where $\bar{x}=4-x$. Up to exchanging $B_{3}$ with $B_{4}$ and
$B_{5}$ with $B_{6}$, we can suppose $a,b\in\{0,1,2\}$. By computing
all minors of size $3$, we obtain that the above intersection matrix
has rank at least $3$ (and at most $4$), moreover it has rank $3$
if and only if $a=b=0$ and $c=4$. \\
If this is the case, then the Néron-Severi lattice of the K3 surface
$X$ contains a sub-lattice isometric to $S_{1}$, generated by $\frac{1}{2}(B_{1}+B_{2}),B_{1},B_{3}$.
Since the discriminant group of $S_{1}$ does not contains isotropic
elements, we conclude that $\NS X)$ is isometric to $S_{1}$. 
\end{proof}
An example of a K3 surface with three $6$-tangent conics has been
constructed by Elsenhans and Jähnel as an auxiliary surface to obtain
double plane K3 surfaces with Picard number $1$ (see \cite[3.4.1]{EJ}).
Their idea is to consider a sextic curve $C_{6}$ with equation $f_{6}$
of the form 
\[
f_{6}=q_{1}q_{2}q_{3}-f_{3}^{2},
\]
where the $q_{j}$ are quadrics and $f_{3}$ is a cubic. If the forms
$q_{1},q_{2},q_{3}$ and $f_{3}$ are generic, then the curve $f_{6}=0$
is smooth and the intersection points of the cubic curve $F_{3}=\{f_{3}=0\}$
and the quadric $Q_{j}=\{q_{i}=0\}$ are transverse. For such point
$p$ of intersection, let $u$ be a local parameter for $Q_{j}$ and
$v$ a local parameter for $F_{3}$. From the equation $f_{6}$, the
parameter $u$ is also a local parameter for the sextic, thus the
local intersection multiplicity of $C_{6}$ and $Q_{j}$ is $\geq2$
at $p$. Since there are $6$ such points on $Q_{j}$, the quadric
$Q_{j}$ is $6$-tangent to the sextic curve $C_{6}$. In fact
\begin{thm}
\label{thm:S1-equation-of-C6}Let $C_{6}$ be a generic smooth sextic
curve which possesses three $6$-tangent conics $C_{j}=\{q_{j}=0\}$,
$j\in\{1,2,3\}$. Then there exists a cubic form $f_{3}$ such that
$C_{6}=\{q_{1}q_{2}q_{3}-f_{3}^{2}=0\}.$
\end{thm}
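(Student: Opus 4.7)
The plan is to produce the cubic $f_3$ as the $\iota$-invariant part of a distinguished global section on the K3 double cover $\pi:Y\to\PP^2$. By Proposition \ref{prop:Picard3NeronS1}, for a generic sextic $C_6$ admitting three $6$-tangent conics one has $\NS Y)\simeq S_1$, and the six $(-2)$-curves $A_1,\dots,A_6$ are the halves of the pullbacks $\pi^*C_j=A_{2j-1}+A_{2j}$. Setting $D=A_1-A_2+A_3=\pi^*H$ and using the formulas $A_4=A_1-2A_2+2A_3$, $A_5=2A_1-3A_2+2A_3$, $A_6=2A_1-2A_2+A_3$ given in the $S_1$ section, one directly computes $A_1+A_3+A_5=3D$ in $\Pic Y)$. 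Hence there exists a nonzero section $s\in H^0(Y,\OO_Y(3D))$ with zero divisor precisely $A_1+A_3+A_5$.

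Let $\iota$ denote the covering involution, so that $\pi_*\OO_Y=\OO_{\PP^2}\oplus\OO_{\PP^2}(-3)$. Decomposing $s=s^++s^-$ into its $\iota$-invariant and anti-invariant parts gives $s^+=\pi^*f_3$ for a cubic form $f_3$ on $\PP^2$ and $s^-=c\cdot z$ for a scalar $c$, where $z$ is the standard anti-invariant section of $\OO_Y(3D)$ satisfying $z^2=\pi^*f_6$. Since $\iota$ swaps the pairs $(A_{2j-1},A_{2j})$, the section $\iota^*s$ has divisor $A_2+A_4+A_6\neq A_1+A_3+A_5$, so $s\neq\iota^*s$ and hence $s^-\neq 0$; conversely, the divisor of $s$ is reducible and distinct from the irreducible ramification divisor, so $s$ is not anti-invariant either, giving $s^+\neq 0$.

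Now consider the product $s\cdot\iota^*s=(s^++s^-)(s^+-s^-)=(s^+)^2-(s^-)^2=\pi^*(f_3^2-c^2f_6)$, which is an $\iota$-invariant global section of $\OO_Y(6D)$. Its zero divisor is $\sum_{i=1}^6 A_i=(A_1+A_4)+(A_2+A_5)+(A_3+A_6)=\pi^*(C_1+C_2+C_3)$, and since $s,\iota^*s$ are nonzero the product is nonzero, so it equals $\pi^*(\lambda q_1q_2q_3)$ for some $\lambda\neq 0$. Injectivity of $\pi^*$ yields the polynomial identity
\[
f_3^2-c^2f_6=\lambda\,q_1q_2q_3
\]
on $\PP^2$. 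Rescaling the defining equation $f_6$ by $-c^2/\lambda$ and replacing $f_3$ by $f_3/\sqrt{\lambda}$ then produces the desired expression $f_6=q_1q_2q_3-f_3^2$.

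The main obstacle is securing the linear equivalence $A_1+A_3+A_5\sim 3D$; this is precisely where the genericity hypothesis and Proposition \ref{prop:Picard3NeronS1} are essential, as for special sextics the Picard number of $Y$ could exceed $3$ and the six curves above the conics need not satisfy this relation. Once this equivalence is in hand, everything else is a formal manipulation of eigensections for $\iota$, relying only on the structure of $\pi_*\OO_Y$ and the identity $z^2=\pi^*f_6$.
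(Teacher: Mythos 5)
Your proof is correct, but it takes a genuinely different route from the paper's. The paper argues by a parameter count: sextics of the form $q_1q_2q_3-f_3^2$ form a $17$-dimensional family, which equals the dimension of the (irreducible) moduli of K3 surfaces of type $S_1$, hence of sextics with three $6$-tangent conics, and density gives the generic statement. You instead construct $f_3$ directly on the double cover: granting $\mathrm{NS}(Y)\simeq S_1$, you pick one component over each conic whose sum is linearly equivalent to $3D$, take the corresponding section of $\OO_Y(3D)\simeq\pi^*\OO_{\PP^2}(3)$, split it into eigenparts $\pi^*f_3+cz$ for the covering involution, and multiply by its conjugate to get $f_3^2-c^2f_6=\lambda q_1q_2q_3$, with $c\neq0$ because the chosen divisor is not $\iota$-invariant. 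Two remarks. First, your labelling $\pi^*C_j=A_{2j-1}+A_{2j}$ is inconsistent with the formulas $A_4=A_1-2A_2+2A_3$, etc., that you then use: in the paper's labelling the pairs over the conics are $(A_1,A_4)$, $(A_2,A_5)$, $(A_3,A_6)$, these being the pairs with intersection $6$ summing to $2D$. Fortunately $A_1,A_3,A_5$ is still one curve from each pair, $A_1+A_3+A_5=3D$ does hold, and $\iota$ sends this triple to $A_2+A_4+A_6$, so nothing breaks --- but note that not every choice of one component per conic sums to $3D$ (only the two complementary ``alternating'' triples do), so this lattice verification is the crux of your argument and deserves emphasis. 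Second, like the paper you read ``generic'' as guaranteeing $\rho(Y)=3$; Proposition \ref{prop:Picard3NeronS1} only says that $\rho(Y)=3$ forces $\mathrm{NS}(Y)\simeq S_1$, so strictly one must also know that the very general sextic with three $6$-tangent conics has Picard number $3$ (which follows from the explicit example of Proposition \ref{prop:The-surface-3conics} on the relevant component). Granting that, your approach buys more: it produces $f_3$ explicitly as the invariant part of the section vanishing on $A_1+A_3+A_5$, it applies to every such sextic whose double cover has Picard number $3$ rather than only to a dense subset, and it bypasses the irreducibility of the moduli space invoked from \cite{Roulleau} in the paper's dimension comparison.
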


\begin{rem*}
Another proof of that result is given in \cite[Theorem 3.6]{ACL}.
\end{rem*}
\begin{proof}
Let $C_{6}=\{q_{1}q_{2}q_{3}-f_{3}^{2}=0\}$ be a generic sextic curve
as above and let $Y\to\PP^{2}$ be the double cover branched over
$C_{6}$. Above each irreducible conics $q_{i}=0,\,(1\leq i\leq3)$,
there are two $\cu$-curves $B_{2i-1}+B_{2i}$ such that $B_{2i-1}B_{2i}=6$.
On the example of Proposition \ref{prop:The-surface-3conics} below,
the intersection matrix $(B_{i}B_{j})_{1\leq i,j\leq6}$ is (up to
permutation of $B_{2i-1},B_{2i}$) the rank $3$ intersection matrix
of the $6$ $\cu$-curves on a K3 surface with Néron-Severi lattice
isomorphic to $S_{1}$. These intersection numbers remain the same
for the $\cu$-curves in that flat family of smooth surfaces $Y$.
The Picard number of $Y$ is at least $3$ and if the equation of
$C_{6}$ is moreover generic, the Picard number of $Y$ is $3$ and
therefore by Proposition \ref{prop:Picard3NeronS1}, one has $\NS Y)\simeq S_{1}$. 

Let $\Phi$ be the map
\[
\Phi:\left|\begin{array}{ccc}
H^{0}(\PP^{2},\OO(2))^{\oplus3}\oplus H^{0}(\PP^{2},\OO(3)) & \to & H^{0}(\PP^{2},\OO(6))\\
\tau=(q_{1},q_{2},q_{3},f_{3}) & \to & f_{\tau}=q_{1}q_{2}q_{3}-f_{3}^{2}
\end{array}\right..
\]
That map is invariant under the action of 
\[
\G:(q_{1},q_{2},q_{3},f_{3})\to(\l q_{1},\mu q_{2},\frac{1}{\l\mu}q_{3},f_{3})
\]
for $\l,\mu\in\CC^{*}$. Suppose that $f_{\tau}=f_{\tau'}$ for $\tau'=(q_{1}',q_{2}',q_{3}',f_{3}')$
and the form $\tau$ is generic so that the double cover $Y$ branched
over the sextic $f_{\tau}=0$ has $\NS Y)\simeq S_{1}$. Since the
conics $q_{i}=0$ are the images of the $6$ $\cu$-curves on $Y$,
up to rescaling by using a transformation $\G$ and up to taking a
permutation of the $q_{j}$'s, one can suppose that , $q'_{1}=q_{1}$,
$q'_{2}=q_{2}$, and $q'_{3}=\nu q_{3}$ for a scalar $\nu$. Then
one obtains the relations
\[
q_{1}q_{2}q_{3}(1-\nu)=(f_{3}-f_{3}')(f_{3}+f_{3}').
\]
If $\nu\neq1$, then the irreducible degree $2$ form $q_{1}$ divides
one of the two degree $3$ factors $(f_{3}-f_{3}'),\,(f_{3}+f_{3}')$.
But then $q_{2}$ or $q_{3}$ cannot be irreducible, which is a contradiction.
Therefore $\nu=1$ and $f_{3}=\pm f_{3}'$. Thus the dimension of
the (unirational) moduli of sextic curves with an equation of the
form $q_{1}q_{2}q_{3}-f_{3}^{2}=0$ is $6\cdot3+10-2-9=17$ dimensional.
\end{proof}
In \cite[Section 2]{Roulleau}, we proved that the moduli spaces of
K3 surfaces with finite automorphism group and Picard number $\geq3$
are irreducible. From the above discussion, we get:
\begin{cor}
The moduli space of K3 surfaces of type $S_{1}$ is unirational.
\end{cor}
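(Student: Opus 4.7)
The plan is to exhibit a dominant rational map from a rational parameter variety onto the moduli space $\mathcal{M}_{S_1}$ of K3 surfaces of type $S_1$, which immediately gives unirationality.

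First I would introduce the parameter space. Let $V$ be the Zariski open subset of $H^0(\PP^2,\OO(2))^{\oplus 3}\oplus H^0(\PP^2,\OO(3))$ consisting of tuples $(q_1,q_2,q_3,f_3)$ such that the sextic $C_6=\{q_1q_2q_3-f_3^2=0\}$ is smooth, the conics $C_j=\{q_j=0\}$ are smooth, and the intersections $C_j\cap\{f_3=0\}$ are all transverse. Then $V$ is an open subset of an affine space, hence rational. Assigning to each point of $V$ the minimal desingularization of the double cover $\pi:Y\to\PP^2$ branched along $C_6$ defines a morphism
\[
\Phi:V\longrightarrow\mathcal{M}_{K3},\qquad (q_1,q_2,q_3,f_3)\longmapsto Y,
\]
and by the construction of Elsenhans--Jähnel recalled before the theorem, each conic $C_j$ is $6$-tangent to $C_6$, so $Y$ carries three $(-2)$-curves pulled back from the $C_j$ plus their involutive images.

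Next I would argue that the image of $\Phi$ lands (generically) in $\mathcal{M}_{S_1}$. For a very general $(q_1,q_2,q_3,f_3)\in V$, the Picard number of $Y$ can be bounded above by using Proposition \ref{prop:Picard3NeronS1}: generically $\rho(Y)=3$, and in that case the N\'eron--Severi lattice is forced to be $S_1$. This shows $\Phi(V)\subset \mathcal{M}_{S_1}$ on a dense open subset, so $\Phi$ induces a rational map $V\dashrightarrow\mathcal{M}_{S_1}$.

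Finally I would check dominance by a dimension count. The preceding theorem shows that a generic K3 surface of type $S_1$ arises from some $(q_1,q_2,q_3,f_3)\in V$, and the computation there gives $\dim V/\sim\,=6\cdot 3+10-2-9=17$, matching the dimension $20-\rho=17$ of $\mathcal{M}_{S_1}$. Combined with the irreducibility of $\mathcal{M}_{S_1}$ recalled from \cite{Roulleau}, the rational map $V\dashrightarrow\mathcal{M}_{S_1}$ is dominant. Since $V$ is rational, $\mathcal{M}_{S_1}$ is unirational. The main (and only) subtlety is confirming that the generic fiber of $\Phi$ is irreducible of the expected dimension, i.e.\ that the group of rescalings $(\lambda q_1,\mu q_2,(\lambda\mu)^{-1}q_3,f_3)$ together with the $\mathrm{PGL}_3$-action on $\PP^2$ accounts for all ambiguity in $(q_1,q_2,q_3,f_3)$; this is precisely what the theorem secures.
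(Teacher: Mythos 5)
Your proposal is correct and takes essentially the same route as the paper: parametrize by tuples $(q_1,q_2,q_3,f_3)$ with $C_6=\{q_1q_2q_3-f_3^2=0\}$, a rational parameter space, and deduce dominance onto the $17$-dimensional irreducible moduli space (irreducibility from \cite{Roulleau}) from the theorem that a generic sextic with three $6$-tangent conics admits such an equation, together with Proposition \ref{prop:Picard3NeronS1} and the dimension count $6\cdot3+10-2-9=17$. The only small slip is attributing the claim that the generic member of the family has $\rho=3$ to Proposition \ref{prop:Picard3NeronS1}, which is conditional on $\rho=3$; in the paper this input is supplied by the explicit example of Proposition \ref{prop:The-surface-3conics}, but this does not change the structure or validity of your argument.
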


\begin{proof}
By Theorem \ref{thm:S1-equation-of-C6}, the rational map from $H^{0}(\PP^{2},\OO(2))^{\oplus3}\oplus H^{0}(\PP^{2},\OO(3))$
to the moduli space of $S_{1}$-polarized K3 surfaces defined by associating
to $(q_{1},q_{2},q_{3},f_{3})$ the double cover surface branched
over the sextic $\{q_{1}q_{2}q_{3}-f_{3}^{2}=0\}$ is dominant. 
\end{proof}
In order to complete the proof of Theorem \ref{thm:S1-equation-of-C6},
let us choose the following equations:
\[
\begin{array}{cc}
q_{1}=2(xy+y^{2}+xz),\hfill & q_{2}=4(x^{2}+xy+z^{2}),\hfill\\
q_{3}=5(x^{2}+xy+yz+z^{2}), & f_{3}=x^{2}y+y^{3}+y^{2}z+xz^{2}+z^{3}.
\end{array}
\]
Then the associated sextic $C_{6}$ is smooth and the three conics
$Q_{j}=\{q_{j}=0\}$ are $6$-tangent to it. 
\begin{prop}
\label{prop:The-surface-3conics}The surface $Y$ which is the double
cover of the plane branched over the curve $C_{6}=\{q_{1}q_{2}q_{3}-f_{3}^{2}=0\}$
has Picard number $3$ and Néron-Severi lattice $S_{1}$.
\end{prop}

For computing the Picard number, we use the method of van Luijk \cite{vL}
refined by Elsenhans and Jahnel in \cite[Section 3.3.1]{EJ}. We first
need to recall:
\begin{thm}
(Artin-Tate Conjecture; satisfied for K3 surfaces). Let $Y$ be a
K3 surface over $\FF_{q}$. Denote by $\rho$ the rank and by $\Delta_{q}$
the discriminant of the Picard group of $Y_{/\FF_{q}}$. Then
\[
\lim_{t\to q}\frac{P_{W}(Y,t)}{(t-q)^{\rho}}=q^{21-\rho}\#Br(Y)|\Delta_{q}|,
\]
where $P_{W}(Y,t)$ is the second Weil polynomial and $Br(Y)$ denotes
the Brauer group of $Y_{/\FF_{q}}$.
\end{thm}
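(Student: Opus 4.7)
The plan is to deduce this formula from two independent inputs: the classical Artin--Tate conjecture for an arbitrary smooth projective surface over $\FF_q$, and the Tate conjecture for divisors on K3 surfaces over finite fields, which is now a theorem. First I would reconcile the normalizations: the polynomial $P_{W}(Y,t)=\prod(t-\alpha_{i})$ used in the statement is related to the standard $L$-factor $P_{2}(Y,t)=\prod(1-\alpha_{i}t)$ appearing in the Weil decomposition of the zeta function $Z(Y,t)$ by $P_{W}(Y,t)=t^{22}\,P_{2}(Y,1/t)$, using that $b_{2}(Y)=22$ for a K3 surface. Writing $(t-q)^{\rho}=t^{\rho}(1-q/t)^{\rho}$ and setting $s=1/t$ converts the limit at $t=q$ into a limit at $s=1/q$, producing a factor $q^{22-\rho}$ in front.

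Next I would apply the classical Artin--Tate formula for a smooth projective surface $Y/\FF_{q}$, namely
\[
\lim_{s\to 1/q}\frac{P_{2}(Y,s)}{(1-qs)^{\rho}}=\frac{\#Br(Y)\cdot|\Delta_{q}|}{q^{\alpha(Y)}\cdot\#\NS Y)_{\mathrm{tors}}^{\,2}},
\]
where $\alpha(Y)=\chi(Y,\OO_{Y})-1+\dim {\rm Pic}^{0}(Y)$. For a K3 surface one has $\chi(Y,\OO_{Y})=2$, ${\rm Pic}^{0}(Y)=0$, and $\NS Y)_{\mathrm{tors}}=0$ (since $H^{1}(Y,\ZZ)=0$ forces the N\'eron--Severi lattice to be torsion-free), so $\alpha(Y)=1$ and the right-hand side collapses to $\#Br(Y)|\Delta_{q}|/q$. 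Combining with the first step produces the prefactor $q^{22-\rho}\cdot q^{-1}=q^{21-\rho}$ that appears in the statement.

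The third and hardest step is to justify the classical Artin--Tate formula itself. A theorem of Milne (``On a conjecture of Artin and Tate'') shows that for a smooth projective surface this formula is equivalent to the conjunction of (i) the Tate conjecture for divisors, that is, ${\rm ord}_{s=1/q}P_{2}(Y,s)=\rho$, and (ii) the finiteness of the Brauer group $Br(Y)$. Both of these ingredients are established for K3 surfaces over finite fields: in characteristic $\neq 2$ through the combined work of Nygaard--Ogus, Maulik, Charles, and Madapusi Pera, and in characteristic $2$ by Kim--Madapusi Pera. Their proofs proceed via the Kuga--Satake construction, deformation theory of K3 surfaces, and integral canonical models of the associated orthogonal Shimura varieties, transporting the Tate conjecture from abelian varieties to K3 surfaces. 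The main obstacle is precisely this Tate conjecture, which is very deep; once it is invoked, the remainder of the proof is the essentially formal dictionary between $P_{W}$ and $P_{2}$ together with the K3-specific simplifications carried out in the first two paragraphs.
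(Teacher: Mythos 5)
Your derivation is correct, but it is worth noting that the paper does not prove this statement at all: it is quoted as a known theorem from the literature (it is the standard tool of van Luijk and Elsenhans--Jahnel for bounding Picard numbers via reduction modulo $p$, cited from \cite{EJ}), and is used purely as a black box in the proofs of Propositions \ref{prop:The-surface-3conics} and the analogous statements in Section \ref{sec:Examples}. So there is nothing in the paper to compare your argument against step by step; what you have written is the genuine justification that the paper implicitly delegates to its references. Your bookkeeping is accurate: with $P_{W}(Y,t)=t^{22}P_{2}(Y,1/t)$ and $(t-q)^{\rho}=t^{\rho}(1-q/t)^{\rho}$ the substitution $s=1/t$ yields the prefactor $q^{22-\rho}$, and for a K3 surface $\chi(\OO_{Y})=2$, $\dim{\rm Pic}^{0}(Y)=0$ and torsion-freeness of $\mathrm{NS}(Y)$ give $\alpha(Y)=1$ and kill the torsion term, producing exactly $q^{21-\rho}\#Br(Y)|\Delta_{q}|$. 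The reduction of the Artin--Tate formula to the Tate conjecture is indeed Milne's theorem (with the characteristic $2$ case settled in his later correction), and the Tate conjecture for K3 surfaces over finite fields is the theorem of Nygaard--Ogus, Charles, Maulik and Madapusi Pera (char $\neq2$) and Kim--Madapusi Pera (char $2$), so the chain of citations is complete; one small redundancy is that your conditions (i) and (ii) are equivalent to each other by Tate's theorem for surfaces over finite fields, not independent hypotheses. In short: the proposal is a correct and appropriately referenced proof of a statement the paper merely asserts.
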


\begin{proof}
(Of Proposition \ref{prop:The-surface-3conics}). Let $Y_{17}$ and
$Y_{29}$ be the reduction modulo $17$ and $29$ respectively of
$Y$. Using Magma \cite{magma}, one finds that the Weil polynomial
of $Y_{29}$ is 

\[
\begin{array}{c}
P_{29}=(T-29)^{4}(T^{18}+34T^{17}+1508T^{16}+47096T^{15}+1487729T^{14}+36778612T^{13}\\
+594823321T^{12}+10706819778T^{11}-68999505236T^{10}-1000492825922T^{9}\\
-58028583903476T^{8}+7572730199403618T^{7}+353814783205469041T^{6}\\
+18398368726684390132T^{5}+625898351490474733529T^{4}\\
+16663261029844769954936T^{3}+448717814875105590929348T^{2}\\
+8508380105131809858775714T+210457284365172120330305161).
\end{array}
\]
In that product, the degree $18$ factor $Q_{18}$ is irreducible
and the roots of $Q_{18}(\frac{t}{29})$ are not roots of unity, thus
by the Tate conjecture (which is satisfied for K3 surfaces), the Picard
number of $Y_{29}$ is $4$ and this is also the geometric Picard
number. We obtain that $\lim_{t\to29}\frac{P_{29}(t)}{(t-29)^{4}}=2^{2}163\cdot29^{17}$.
Since the order of the Brauer group is a square, we see that the class
in $\QQ^{*}$ of the discriminant $\Delta_{29}$ modulo squares in
$\QQ^{*}$ equals $163$.\\
 The Weil polynomial of $Y_{17}$ is
\[
\begin{array}{c}
P_{17}=(T-17)^{4}(T^{18}+20T^{17}+323T^{16}+2890T^{15}+34391T^{14}+668168T^{13}\\
-1419857T^{12}+72412707T^{11}+4513725403T^{10}+139515148820T^{9}\\
+1304466641467T^{8}+6047981701347T^{7}-34271896307633T^{6}\\
+4660977897838088T^{5}+69332046230341559T^{4}\\
+1683778265594009290T^{3}+54386037978686500067T^{2}\\
+973223837513337369620t+14063084452067724991009).
\end{array}
\]
Again the degree $18$ factor $R_{18}$ is such that $R_{18}(\frac{t}{17})$
has no cyclotomic factor, thus the surface $Y_{17}$ has geometric
Picard number $4$. Since $\lim_{t\to17}\frac{P_{17}(t)}{(t-17)^{4}}=2^{4}13\cdot17^{17}$,
the class of the discriminant $\Delta_{17}$ modulo squares in $\QQ^{*}$
is $13$. If the surface $Y$ had Picard number $4$, the discriminants
$\Delta_{17}$ and $\Delta_{29}$ would belong in the same class modulo
a square (see \cite[Section 3.3.1]{EJ}), thus the surface $Y$ has
(geometric) Picard number $3$, and by Proposition \ref{prop:Picard3NeronS1},
it has Néron-Severi lattice $S_{1}$.
\end{proof}

\subsection{Construction of surfaces of type $S_{2}$\label{subsec:ExamplesS2}}

We recall that a surface of type $S_{2}$ is a smooth quartic surface
in $\PP^{3}$ with Picard number $3$ and three quadric sections,
each of which is the union of two degree $4$ smooth rational curves. 

Conversely, let $Y$ be a smooth quartic surface with three quadric
sections $Q_{1},Q_{2},Q_{3}$ such that each decomposes into the union
of two degree $4$ smooth rational curves : 
\[
Q_{1}=B_{1}+B_{2},\,Q_{2}=B_{3}+B_{4},\,Q_{3}=B_{5}+B_{6}.
\]
Because $Q_{i}^{2}=16$ and $B_{k}^{2}=-2$, we get $B_{1}B_{2}=B_{3}B_{4}=B_{5}B_{6}=10$. 
\begin{prop}
\label{prop:Suppose-S2}Suppose that the Picard number of $Y$ equals
to $3$. Then the Néron-Severi lattice of $Y$ is isometric to $S_{2}$.
\end{prop}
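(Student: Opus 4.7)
The plan is to mirror the proof of Proposition \ref{prop:Picard3NeronS1}. Here the role of the double-plane involution is played by the hyperplane class: since $Y\hookrightarrow\PP^3$ has hyperplane class $H$ with $H^2=4$, each quadric section gives $B_{2i-1}+B_{2i}=2H$ in $\NS Y)$, so $HB_j=4$ and $B_{2i-1}B_{2i}=10$ (from $(B_{2i-1}+B_{2i})^2=4H^2=16$). For curves from different pairs, $(B_1+B_2)B_3=2HB_3=8$, hence $B_1B_3+B_2B_3=8$, and similarly for every pair of off-pair curves. Setting $a=B_1B_3$, $b=B_1B_5$, $c=B_3B_5$, the full $6\times 6$ intersection matrix of the $B_i$ is determined, with entries completely analogous to those appearing in the proof of Proposition \ref{prop:Picard3NeronS1} but with $10$ on the antidiagonal and $\bar x=8-x$ in place of $\bar x=4-x$.

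Since the $B_i$ are distinct irreducible $(-2)$-curves, $a,b,c\in\{0,1,\dots,8\}$, and by swapping $B_3\leftrightarrow B_4$ and/or $B_5\leftrightarrow B_6$ one may assume $a,b\in\{0,1,2,3,4\}$. The hypothesis $\rho=3$ forces the $6\times 6$ intersection matrix to have rank at most $3$, equivalently all its $4\times 4$ minors vanish. Expanding these minors produces polynomial equations in $(a,b,c)$ over this integer range, and a case analysis yields (up to the residual within-pair swap symmetries) the essentially unique solution $\{a,b,c\}=\{1,1,7\}$, which reconstructs precisely the intersection matrix of the six $(-2)$-curves on a K3 surface of type $S_2$.

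From these numbers one checks directly that the sublattice $\Lambda\subset\NS Y)$ generated by $H,B_1,B_3$ has Gram matrix of determinant $108=|\det S_2|$ and, by an explicit change of basis, is isomorphic to $S_2$ in the presentation of Section 3.2. To conclude $\NS Y)=\Lambda$, observe that any proper overlattice of $\Lambda$ of the same rank would correspond to a non-trivial isotropic element in the discriminant group of $S_2$; since this discriminant group contains no isotropic element, $\Lambda$ is already saturated and $\NS Y)\cong S_2$. The main obstacle is the middle step: enumerating the $4\times 4$ minors of a $6\times 6$ matrix in three unknowns and solving over the integer range while correctly quotienting by the pair-swap symmetries, and in particular ruling out degenerate possibilities (e.g.\ $(a,b,c)=(1,1,1)$) in which the six curves span only a rank-$2$ sublattice that is rescued to Picard rank $3$ only through $H$.
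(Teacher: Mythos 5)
Your reduction of the intersection-matrix computation is correct and is essentially the paper's own argument in streamlined form: since $B_{2i-1}+B_{2i}=2H$ in $\NS Y)$ and $HB_j=4$, the twelve unknowns the paper works with collapse to your three parameters $a,b,c$, and imposing rank $\leq 3$ (which is what $\rho=3$ gives, as $H$ lies in the $\QQ$-span of the $B_j$) singles out, up to the allowed swaps, the configuration $(a,b,c)=(1,1,7)$ --- the paper's solution $(1,7,7,1,1,7,7,1,7,1,1,7)$; the degenerate low-rank cases you flag are a side issue disposed of by the same enumeration. The identification of the rank-$3$ lattice generated by $H,B_1,B_3$ (determinant $108$) with $S_2$ is also fine.

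The genuine gap is in your saturation step. It is false that the discriminant group of $S_2$ has no non-trivial isotropic element: $A(S_2)\cong\ZZ/36\ZZ\times\ZZ/3\ZZ$ is generated by $\frac{1}{36}L$ and $\frac{1}{3}(A_1-A_2)$, and the class of $\frac{1}{3}L$ has square $\frac{36}{9}=4\equiv 0$ in $\QQ/2\ZZ$, hence is isotropic. The corresponding even over-lattice of index $3$ is precisely $S_5$ (the class $\frac{1}{3}L$ becomes the square-$4$ generator of the Gram matrix \eqref{eq:matrixS5}), so lattice theory alone leaves two candidates for $\NS Y)$, namely $S_2$ and $S_5$, and your argument cannot distinguish them. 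The paper excludes $S_5$ geometrically: $Y$ is a smooth quartic, so it carries an ample class of square $4$ (the hyperplane class $H$), whereas a K3 surface with N\'eron--Severi group $S_5$ admits no ample class of square $4$ (its square-$4$ nef classes, such as $L$ and $L'=3L-4(A_1+A_2)$, are orthogonal to $(-2)$-curves, and its ample class of minimal square is the degree-$2$ class $D$). Without an argument of this kind ruling out the over-lattice, your proof does not reach the conclusion $\NS Y)\cong S_2$.
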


\begin{proof}
Because $Q_{s}Q_{t}=16$ for $1\leq s,t\leq3$, the intersection matrix
of the $6$ curves $B_{i},\,1\leq i\leq6$ has the form
\[
M=\left(\begin{array}{cccccc}
-2 & 10 & a_{1} & b_{1} & a_{2} & b_{2}\\
10 & -2 & c_{1} & d_{1} & c_{2} & d_{2}\\
a_{1} & c_{1} & -2 & 10 & a_{3} & b_{3}\\
b_{1} & d_{1} & 10 & -2 & c_{3} & d_{3}\\
a_{2} & c_{2} & a_{3} & c_{3} & -2 & 10\\
b_{2} & d_{2} & b_{3} & d_{3} & 10 & -2
\end{array}\right)
\]
where the $a_{i},b_{i},c_{i},d_{i}$ are integers in $\{0,1,...,16\}$
such that $a_{i}+b_{i}+c_{i}+d_{i}=16$. Up to exchanging $B_{3}$
and $B_{4}$ and $B_{5}$ and $B_{6}$, we can suppose that $a_{1}\leq b_{1}$
and $a_{2}\leq b_{2}$. Using a computer for checking the finite number
of possibilities, one finds that the only solution for $M$ to be
of rank $3$ (with $a_{1}\leq b_{1}$ and $a_{2}\leq b_{2}$) is
\[
(a_{1},b_{1},c_{1},d_{1},a_{2},b_{2},c_{2},d_{2},a_{3},b_{3},c_{3},d_{3})=(1,7,7,1,1,7,7,1,7,1,1,7).
\]
Then the Néron-Severi lattice of $Y$ contains $S_{2}$. The discriminant
group $A(S_{2})$ of $S_{2}$ is generated by classes $\frac{1}{36}L$
and $\frac{1}{3}(A_{1}-A_{2})$. The unique non-trivial isotropic
element in $A(S_{2})$ is $a=\frac{1}{3}L$ (one has $a^{2}=4=0\in\QQ/2\ZZ$).
The corresponding over-lattice is $S_{5}$, but a K3 surface with
Néron-Severi lattice $S_{5}$ does not have an ample class $D$ with
$D^{2}=4$, thus the Néron-Severi lattice of $Y$ is $S_{2}$. 
\end{proof}
\begin{prop}
The moduli space of K3 surfaces $X$ with $\NS X)\simeq S_{2}$ is
unirational.
\end{prop}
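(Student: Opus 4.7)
The plan is to exhibit a unirational $17$-dimensional family of quartic surfaces dominating $M_{S_{2}}$, following the strategy of Subsection~\ref{subsec:ExamplesS1}. By Proposition~\ref{prop:Suppose-S2} it suffices to parametrize pairs $(X,\{B_{1},B_{2}\})$, where $X\subset\PP^{3}$ is a smooth quartic containing two smooth rational quartic curves $B_{1},B_{2}$ with $B_{1}\cdot B_{2}=1$, and then to check that the generic such $X$ has Picard rank exactly $3$.

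First I would describe the Hilbert scheme $\mathcal{H}$ of smooth rational quartic curves in $\PP^{3}$. Such a curve is the image of a morphism $\PP^{1}\to\PP^{3}$, $t\mapsto[f_{0}(t):f_{1}(t):f_{2}(t):f_{3}(t)]$, for linearly independent binary quartic forms $f_{i}$; hence $\mathcal{H}$ is dominated by an open subset of $\mathbb{A}^{20}$ and is unirational of dimension $16$. Each $B\in\mathcal{H}$ lies on a unique quadric $Q(B)$ because $h^{0}(I_{B}(2))=10-h^{0}(\mathcal{O}_{B}(2))=1$. A standard incidence computation (curves through a fixed point form a codimension-two sublocus of $\mathcal{H}$) shows that the locus $U\subset\mathcal{H}\times\mathcal{H}$ of pairs $(B_{1},B_{2})$ meeting in exactly one point has codimension $1$, so $\dim U=31$.

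Next, for $(B_{1},B_{2})\in U$ the nodal curve $B_{1}\cup B_{2}$ has arithmetic genus $0$ and degree $8$, and for generic such pair
\[
h^{0}(I_{B_{1}\cup B_{2}}(4))=35-(8\cdot 4+1)=2,
\]
so the quartics through $B_{1}\cup B_{2}$ form a pencil that contains the reducible member $Q(B_{1})\cdot Q(B_{2})$. The resulting incidence variety $\mathcal{P}$ of triples $(B_{1},B_{2},X)$ is therefore unirational of dimension $32$, and after quotienting by $\mathrm{PGL}_{4}$ (of dimension $15$) one obtains a unirational family of dimension $17=\dim M_{S_{2}}$.

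Finally I would verify that the generic member of $\mathcal{P}$ is of type $S_{2}$. For generic $(B_{1},B_{2},X)$, the restriction $F_{4}|_{Q(B_{i})}$ lies in $I_{B_{i}/Q(B_{i})}\otimes\mathcal{O}_{Q(B_{i})}(4,4)\simeq\mathcal{O}(3,1)$, giving a residual smooth rational $(3,1)$-curve $B'_{i}$. Hence $X$ contains classes $L,B_{1},B_{2}$ spanning a rank-$3$ sublattice of $\NS X)$ whose Gram matrix has determinant $108$, equal to the discriminant of $S_{2}$; a change of basis by $L\mapsto L+4B_{1}+4B_{2}$ displays this sublattice as isometric to $S_{2}$. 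Exactly as in the proof of Proposition~\ref{prop:Suppose-S2}, the only possible overlattice $S_{5}$ is excluded because $X$ admits an ample class of square $4$. Upper semicontinuity of the Picard rank, combined with the explicit rank-$3$ Example~\ref{exa:COSTA_S2}, then ensures that the generic member satisfies $\NS X)\simeq S_{2}$; by irreducibility of the moduli established in \cite{Roulleau}, our unirational family dominates $M_{S_{2}}$. The main obstacle will be justifying sharply that the meeting locus $U$ has codimension exactly one and that a generic member of the pencil through $B_{1}\cup B_{2}$ is a smooth irreducible quartic rather than a degenerate one.
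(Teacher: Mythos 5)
Your plan is correct and follows essentially the same route as the paper: there, too, one takes two rational quartic curves in $\PP^{3}$ meeting in a single point, a smooth quartic surface containing them, identifies the sublattice spanned by the hyperplane class and the two curves (via the residual curves on the unique quadrics through them) with $S_{2}$, excludes the overlattice exactly as in Proposition~\ref{prop:Suppose-S2}, and invokes Example~\ref{exa:COSTA_S2} to get Picard number $3$ generically, whence dominance of the irreducible $17$-dimensional moduli space. Your explicit Hilbert-scheme dimension counts and the basis change $L\mapsto L+4B_{1}+4B_{2}$ are just a more detailed rendering of the same construction, and the two delicate points you flag (unirationality/codimension of the incidence locus and smoothness of the generic pencil member) are handled no more elaborately in the paper itself.
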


\begin{proof}
Consider two rational quartic curves $C_{1}$ and $C_{3}$ in $\PP^{3}$
such that the degree of their intersection scheme $C_{1}\cdot C_{3}$
is $1$ (such rational quartic curves are parametrized by a projective
space). Let $Y$ be a smooth quartic in the linear system of quartics
containing $C_{1}$ and $C_{3}$. By \cite[Exercise IV.6.1]{Hartshorne},
there exists a unique quadric $Q$ (resp. $Q'$) which is moreover
smooth, containing the curve $C_{1}$ (resp. $C_{3}$). The intersection
of $Y$ and $Q$ (resp. $Q'$) is the union of $C_{1}$ (resp. $C_{3}$)
and a degree $4$ curve $C_{2}$ (resp. $C_{4}$). Since 
\[
8=2HC_{1}=(C_{1}+C_{2})C_{1}
\]
(for a hyperplane $H$), we get $C_{1}C_{2}=10$, and therefore from
$(C_{1}+C_{2})^{2}=16$, we obtain $C_{2}^{2}=-2$. Similarly, the
intersection of $Y$ with $Q'$ is the union $C_{3}+C_{4}$ with $C_{3}C_{4}=10$
and $C_{4}^{2}=-2$. Since by construction $C_{1}C_{3}=1$, and moreover
\[
8=C_{1}(C_{3}+C_{4}),
\]
we get $C_{1}C_{4}=7$. From the construction, the curves $C_{1},\dots,C_{4}$
and $H$ generate a sub-lattice $S_{2}$ in the Néron-Severi lattice
of $Y$. The Picard number of $Y$ is therefore $\geq3$ and the generic
surface has $\NS Y)\simeq S_{2}$. From that construction, it is
clear that the moduli space is unirational.
\end{proof}
To show the effectiveness of the construction, let us give an example
of a K3 surface with Néron-Severi lattice $S_{2}$.
\begin{example}
\label{exa:COSTA_S2}Let $Y$ be the smooth quartic surface with equation
\[
\begin{array}{c}
-27x^{4}-246x^{3}y+102xy^{3}-216y^{4}-108x^{3}z+348x^{2}yz+459xy^{2}z+270y^{3}z\\
-162x^{2}z^{2}+51xyz^{2}-27y^{2}z^{2}-108xz^{3}-27z^{4}-9x^{3}t-9x^{2}yt+522xy^{2}t\\
-648y^{3}t+46x^{2}zt-369xyzt+911y^{2}zt-170xz^{2}t-279yz^{2}t-44z^{3}t\\
-27x^{2}t^{2}+672xyt^{2}-84xzt^{2}+546yzt^{2}+24z^{2}t^{2}+72xt^{3}=0.
\end{array}
\]
Each quadrics $Q_{1}=\{xy+y^{2}+2xz+yz+z^{2}+3xt+3yt\}$ and $Q_{2}=\{3xy-zt\}$
cuts $Y$ into two smooth quartic rational curves $B_{1},B_{2}$ and
$B_{3},B_{4}$ respectively. Moreover, one can check that $B_{1}B_{3}=B_{2}B_{4}=1$
and $B_{2}B_{3}=B_{1}B_{4}=7$, thus the curves $B_{1},\dots,B_{4}$
and the class of the hyperplane section generate a lattice of type
$S_{2}$. If the Picard number of the surface $Y$ equals $3$, then
$S_{2}$ is the Néron-Severi lattice of $Y$ (by the same argument
as in the proof of Proposition \ref{prop:Suppose-S2}) and therefore
there exists a third quadric $Q_{3}$ that cuts $Y$ into the union
of the two remaining $(-2)$-curves. Let $Y_{23}$ and $Y_{41}$ the
reductions modulo $23$ and $41$ of the surface $Y$. Thanks to the
computations \cite{Costa4} of Edgar Costa using algorithms developed
in \cite{Costa,Costa2,Costa3}, both surfaces have Picard number and
geometric Picard number $4$. The discriminant of the Néron-Severi
lattice of the surfaces $Y_{23}$ and $Y_{41}$ have classes modulo
square $-211$ and $-1$ respectively, thus the Picard number of $Y_{/\CC}$
is $3$ and $Y$ is an example of surface of type $S_{2}$.
\end{example}

\subsection{Construction of surfaces of type $S_{3}$\label{subsec:ExamplesS3}}

Let $Y$ be a smooth quartic surface with two hyperplanes sections
$H_{1},H_{2}$ such that each of them is the union of two conics $H_{1}=C_{1}+C_{2}$
and $H_{2}=C_{3}+C_{4}$. Then
\begin{prop}
\label{prop:Construction-S3}The intersection matrix of the curves
$C_{1},\dots,C_{4}$ is either 
\[
M_{1}=\left(\begin{array}{cccc}
-2 & 4 & 1 & 1\\
4 & -2 & 1 & 1\\
1 & 1 & -2 & 4\\
1 & 1 & 4 & -2
\end{array}\right)\text{ or }M_{2}=\left(\begin{array}{cccc}
-2 & 4 & 0 & 2\\
4 & -2 & 2 & 0\\
0 & 2 & -2 & 4\\
2 & 0 & 4 & -2
\end{array}\right).
\]
Suppose that $Y$ has Picard number $3$. If the intersection matrix
is $M_{1}$ then the Néron-Severi lattice of $Y$ is isometric to
$S_{3}$, in particular $Y$ has trivial automorphism group and the
$4$ conics are the only $(-2)$-curves on $Y$. \\
If the intersection matrix is $M_{2}$ then the $4$ conics are also
the only $(-2)$-curves on $Y$, this is case $S_{114}$ in \cite[Table 3]{Nikulin2}. 
\end{prop}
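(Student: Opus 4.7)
My plan is to prove this in three phases: (i) reduce the possible intersection matrices to $M_1$ and $M_2$ by elementary arithmetic, (ii) identify the abstract N\'eron--Severi lattice in each case and apply the Weyl-chamber analysis to control the set of $(-2)$-curves, and (iii) exhibit explicit quartics realizing each configuration.

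For (i), each $C_i$ is a smooth rational curve on $Y$, so $C_i^2 = -2$, and as a plane conic it satisfies $H \cdot C_i = 2$ where $H$ is the hyperplane class. The linear equivalences $H_1 \sim H \sim H_2$ give $C_1 + C_2 \equiv C_3 + C_4 \equiv H$ in $\NS Y)$. Intersecting $C_1 + C_2 = H$ with $C_1$ yields $C_1 C_2 = 4$, and similarly $C_3 C_4 = 4$. The relation $C_1 \cdot H = 2$ forces $C_1 C_3 + C_1 C_4 = 2$, so by non-negativity $C_1 C_3 \in \{0,1,2\}$; the same constraints hold for the other pairs. One then checks that the cases $C_1 C_3 = 0$ and $C_1 C_3 = 2$ are exchanged by $C_3 \leftrightarrow C_4$, leaving exactly the two matrices $M_1$ and $M_2$.

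For (ii), I would compute the Gram determinants on the basis $(H, C_1, C_3)$: one obtains $36$ for $M_1$ and $32$ for $M_2$. In case $M_1$, I would exhibit an explicit isometry of the resulting sublattice with $S_3$ by producing a primitive nef class $L$ of square $12$ orthogonal to a hyperbolic pair of $(-2)$-classes with Gram matrix $\begin{pmatrix} -2 & 1 \\ 1 & -2 \end{pmatrix}$, and then argue as in the proof of Proposition~\ref{prop:Suppose-S2} by enumerating the isotropic elements of the discriminant group of $S_3$ to exclude any proper overlattice that still admits $H$ as an ample class of square $4$ with the described conic decomposition. For $M_2$, the new feature is the isotropic class $(C_1 + C_4)^2 = 0$, which forces an elliptic fibration and so distinguishes this lattice from $S_3$. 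In either case, once $\NS Y)$ is pinned down at rank $3$ (the generic situation), the Shimada algorithm of Section~\ref{subsec:An-algorithm-to} combined with Vinberg's sieve yields the list of $(-2)$-curves, and a direct check confirms that it consists of exactly the four classes $C_1, C_2, C_3, C_4$ in both matrices.

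For (iii), to produce both configurations I would take two smooth conics $C_1 \subset \Pi_1$ and $C_3 \subset \Pi_3$ in general planes of $\PP^3$ with $|C_1 \cap C_3|$ equal to $1$ (case $M_1$) or $0$ (case $M_2$), and choose $Y$ in the linear system $|\mathcal{I}_{C_1 \cup C_3}(4)|$ of quartics containing both conics; the residual intersections $Y \cap \Pi_1 - C_1$ and $Y \cap \Pi_3 - C_3$ then give smooth conics $C_2, C_4$ for general $Y$. To confirm that the Picard number is exactly $3$, I would apply the van Luijk--Elsenhans--Jahnel method of reduction modulo two good primes, as in Example~\ref{exa:COSTA_S2}. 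I expect the main obstacle to be this last verification: locating, for each of $M_1$ and $M_2$, a specific quartic whose reductions at two well-chosen primes both have geometric Picard rank $4$ with discriminants lying in different square classes of $\QQ^*$, which is the usual bottleneck for arguments of this type.
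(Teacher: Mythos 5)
Your phase (i) and the $M_{1}$ half of phase (ii) follow essentially the paper's own route (reduce to $a=C_{1}C_{3}\in\{0,1,2\}$ up to swapping $C_{3},C_{4}$; identify the conic lattice with $S_{3}$; kill overlattices because the discriminant group of $S_{3}$ has no non-trivial isotropic element), and your explicit identification via $L=H+2C_{1}+2C_{3}$ of square $12$ orthogonal to $C_{1},C_{3}$ is correct. The genuine gap is in the $M_{2}$ case: observing the isotropic vector $(C_{1}+C_{4})^{2}=0$ only shows the conic lattice is not $S_{3}$; it does not pin down $\NS Y)$. Unlike the $M_{1}$ case, the rank-$3$ lattice generated by $H,C_{1},C_{3}$ when $C_{1}C_{3}=0$ (discriminant $32$) \emph{does} admit proper even overlattices --- its discriminant group has non-trivial isotropic elements --- so even when the Picard number is $3$ you cannot conclude that $\NS Y)$ equals the lattice spanned by the four conics, and hence your appeal to the Shimada algorithm and Vinberg's sieve is run on a lattice you have not yet justified. "Pinned down at rank $3$" is not the same as "pinned down as a lattice": this is exactly the step your plan skips, and it is where the paper does extra work, identifying the conic lattice as Nikulin's $S_{1,1,4}$ and excluding its two overlattices $S_{1,1,2}$, $S_{1,1,1}$ because K3 surfaces with those N\'eron--Severi groups carry only three $(-2)$-curves, contradicting the presence of the four conics. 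Without some argument of this kind the conclusion that the four conics are the only $(-2)$-curves in the $M_{2}$ case is unsupported.

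Two smaller points. First, the pair of $(-2)$-classes with Gram matrix $\left(\begin{smallmatrix}-2 & 1\\ 1 & -2\end{smallmatrix}\right)$ is negative definite (an $\mathbf{A}_{2}(-1)$ pair), not a hyperbolic pair; the argument is unaffected but the terminology is off. Second, for "both cases exist" you only sketch a construction (two conics meeting in $1$ or $0$ points plus a quartic through them) and explicitly defer the van Luijk--Elsenhans--Jahnel verification; that is the same strategy the paper uses for $M_{1}$ (Example \ref{exa:Costa}, relying on Costa's zeta-function computations), while for $M_{2}$ the paper defers the details elsewhere, so this part is acceptable as a plan but is not yet a proof of existence.
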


\begin{proof}
Since the curves $C_{1},C_{2}$ and $C_{3},C_{4}$ are plane conics
we have $C_{1}C_{2}=4=C_{3}C_{4}$ and $C_{j}H_{k}=2$. Let $a\in\{0,1,2\}$
be such that $a=C_{1}C_{3}$, then $C_{1}C_{4}=2-a$, $C_{3}C_{2}=2-a$
and $C_{2}C_{4}=a$. Thus, up to exchanging $C_{3}$ and $C_{4}$,
the intersection matrix of the $C_{i}$ is either $M_{1}$ or $M_{2}$.\\
 In the first case, the lattice generated by the $4$ conics in $Y$
is (isometric to) $S_{3}$. Suppose that $Y$ has Picard number $3$.
The discriminant group $A(S_{3})$ of $S_{3}$ is generated by $\frac{1}{12}L,\,\frac{1}{3}(A_{1}-A_{2})$.
There is no non-trivial isotropic element in $A(S_{3})$, thus the
Néron-Severi lattice of the quartic $Y$ is $S_{3}$.\\
In the second case, the rank $3$ lattice generated by the $4$ conics
is denoted by $S_{114}$ in \cite[Table 3]{Nikulin2}. Still by \cite[Table 3]{Nikulin2},
there are two over-lattices $S_{112}$ and $S_{111}$ of the lattice
$S_{114}$. But the surfaces with Néron-Severi lattice $S_{112}$
or $S_{111}$ contain only three $(-2)$-curves, thus the Néron-Severi
lattice of $Y$ is $S_{114}$, and by \cite{Nikulin2} such a surface
contains only four $(-2)$-curves. 
\end{proof}
In the second case the fundamental domain $\mathcal{F}_{X}$ is not
compact, we also remark from the intersection matrix $M_{2}$ that
it has an elliptic fibration; we give more details on that surface
in \cite{Roulleau}. 
\begin{cor}
The moduli space of K3 surfaces $X$ with $\NS X)\simeq S_{3}$ is
unirational.
\end{cor}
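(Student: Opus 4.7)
The plan is to construct an explicit rationally parametrized family of smooth quartic surfaces $Y\subset\PP^{3}$ realizing the configuration with intersection matrix $M_{1}$ of Proposition~\ref{prop:Construction-S3}, and then invoke the irreducibility of the moduli space of K3 surfaces with a finite automorphism group and Picard number $\geq 3$ from \cite{Roulleau} to deduce that the induced map to the moduli of K3 surfaces $Y$ with $\NS Y)\simeq S_{3}$ is dominant.

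First I would parametrize the combinatorial data rationally. Fix a line $\ell\subset\PP^{3}$, an ordered quadruple of distinct points $p_{1},\dots,p_{4}\in\ell$, and two distinct planes $H_{1},H_{2}$ containing $\ell$; these choices live in an open subset of a rational variety of dimension $4+4+2=10$. Inside $H_{1}$ take a conic $C_{1}$ through $p_{1},p_{2}$ and a conic $C_{2}$ through $p_{3},p_{4}$, and inside $H_{2}$ take conics $C_{3}$ through $p_{1},p_{3}$ and $C_{4}$ through $p_{2},p_{4}$. Each such conic moves in a $\PP^{3}$, so the total space of configurations is rational of dimension $22$. This prescribed pattern on $\ell$ forces $C_{i}\cdot C_{j}=1$ whenever $i\in\{1,2\}$ and $j\in\{3,4\}$, so every configuration in this family realizes the intersection matrix $M_{1}$ of Proposition~\ref{prop:Construction-S3}.

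Second, I would consider the linear system $\Lambda$ of quartic surfaces $Y\subset\PP^{3}$ with $Y\cap H_{1}=C_{1}+C_{2}$ and $Y\cap H_{2}=C_{3}+C_{4}$. The short exact sequence
\[
0\to\mathcal{O}_{\PP^{3}}(2)\to\mathcal{O}_{\PP^{3}}(4)\to\mathcal{O}_{H_{1}\cup H_{2}}(4)\to 0
\]
shows that the restriction map on global sections is surjective, and since $h^{0}(\mathcal{O}_{H_{1}\cup H_{2}}(4))=25$ a dimension count gives $\dim\Lambda=10$. A generic $Y\in\Lambda$ is smooth by a Bertini-type argument, after a local check at the finitely many base points of $\Lambda$ lying on $C_{1}\cup\cdots\cup C_{4}$. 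The total dimension $22+10-\dim PGL_{4}=17$ then matches exactly the dimension of the moduli space of K3 surfaces with N\'eron-Severi lattice $S_{3}$, so the resulting moduli map will be dominant as soon as its image is non-empty.

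The main obstacle is to show that the Picard number of a generic $Y$ in this family does not jump above $3$, for then Proposition~\ref{prop:Construction-S3} identifies $\NS Y)$ with $S_{3}$. For this I would exhibit a single explicit example over $\QQ$ by specializing the coefficients to integers and applying the van Luijk--Elsenhans--J\"ahnel method on two reductions modulo primes of good reduction, in the spirit of Example~\ref{exa:COSTA_S2}. Combined with the irreducibility of the moduli space of K3 surfaces with finite automorphism group and fixed Picard lattice from \cite{Roulleau}, this single example produces the desired dominant rational map from our rational parameter space and proves the unirationality of the moduli space.
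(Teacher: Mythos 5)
Your strategy is essentially the paper's: its own proof of this corollary is a one-line appeal to the construction of Proposition~\ref{prop:Construction-S3} (quartics whose two hyperplane sections split into pairs of conics), with the generic Picard number $3$ supplied by an explicit example checked by the van Luijk--Elsenhans--Jahnel method, which is exactly the paper's Example~\ref{exa:Costa}. Your added value is the explicit rational parametrization and the count $22+10-\dim PGL_{4}=17$; note that prescribing the four points on $\ell=H_{1}\cap H_{2}$ first is indeed the right move, since $Y\cap H_{1}=C_{1}+C_{2}$ and $Y\cap H_{2}=C_{3}+C_{4}$ force $(C_{1}+C_{2})\cap\ell=(C_{3}+C_{4})\cap\ell$, and your incidence pattern forces the matrix $M_{1}$ (each cross product $C_{i}C_{j}$, $i\in\{1,2\}$, $j\in\{3,4\}$, is $\geq1$ and the four of them sum to $4$).

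The one step that does not hold as written is the dominance claim: ``the image is non-empty and the dimensions agree, hence the map is dominant'' is not a valid inference, even with irreducibility of the target --- a map from a $17$-dimensional variety to an irreducible $17$-dimensional moduli space can have positive-dimensional generic fibres and a lower-dimensional image. What actually gives dominance here, and what the paper's ``it is clear'' implicitly relies on, is the converse direction already established in the subsection on $S_{3}$: on a K3 surface $X$ with $\NS X)\simeq S_{3}$ the class $D=L-2A_{1}-2A_{2}$ is the unique big and nef class of square $4$, it is very ample, and it realizes $X$ as a quartic whose two split hyperplane sections are $A_{1}+A_{3}$ and $A_{2}+A_{4}$, i.e.\ as a member of your family (for all $X$ outside the proper closed locus where the four intersection points on $\ell$ fail to be distinct). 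Hence the image of your parameter space contains a dense subset of the moduli space, which is dominance; equivalently, the fibre over such an $X$ is a single $PGL_{4}$-orbit, because the degree-$4$ model is unique and $\aut(X)$ is trivial, so the fibres of the map from your $32$-dimensional parameter space are generically $15$-dimensional. With that replacement for the dimension-matching argument (and keeping your explicit example to control the generic Picard number, as in Example~\ref{exa:Costa}), the proof is complete and coincides with the paper's.
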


\begin{proof}
Let $C_{1}$ and $C_{3}$ be two conics on two fixed planes $P_{1},P_{2}$
in $\PP^{3}$ (we can choose $P_{j}=\{x_{j}=0\},\,j\in\{1,2\}$),
such that $C_{1},C_{3}$ meet transversely at exactly one point. Let
$Y$ be a smooth quartic in the linear system of quartics containing
$C_{1}$ and $C_{3}$. The conics $C_{2},C_{4}$ such that $P_{1}\cdot Y=C_{1}+C_{2}$,
$P_{2}\cdot Y=C_{3}+C_{4}$ are smooth if $Y$ is sufficiently generic.
Then, because $C_{1},C_{3}$ meet transversely at one point, the intersection
matrix of $C_{1},\dots,C_{4}$ is $M_{1}$, thus the Picard number
of $Y$ is at least $3$ and for a generic choice it is $3$, then
by Proposition \ref{prop:Construction-S3}, $\NS Y)\simeq S_{3}$.
 From that construction, it is clear that the moduli space is unirational.
\end{proof}
To show the effectiveness of the construction, let us give an example
of a K3 surface with Néron-Severi lattice $S_{3}$.
\begin{example}
\label{exa:Costa}Consider the quartic $Y$ with equation
\[
\begin{array}{c}
-10x^{3}z+8x^{3}t-180x^{2}t^{2}-9xy^{3}-2xy^{2}z-10xy^{2}t+9xyt^{2}+430xz^{3}-784xz^{2}t\\
-678xzt^{2}+680xt^{3}+3y^{4}+16y^{3}z-43y^{2}z^{2}-1896y^{2}zt-2825y^{2}t^{2}-4246yz^{3}\\
-4700yz^{2}t+23846yzt^{2}+32620yt^{3}-420z^{4}+376z^{3}t+2048z^{2}t^{2}-376zt^{3}-1628t^{4}.
\end{array}
\]
One can check that the hyperplane sections $H_{1}=\{x=0\}$ and $H_{2}=\{y=0\}$
are the union of conics $C_{1},C_{2},C_{3},C_{4}$ with intersection
matrix $M_{1}$.  The reduction $Y_{23}$ and $Y_{73}$ modulo $23$
and $73$ of $Y$ are smooth K3 surfaces. Thanks to the computations
of E. Costa \cite{Costa4}, the Picard number of both surfaces $Y_{23}$
and $Y_{73}$ is $4$ and this is also their geometric Picard number.
The class modulo squares of the discriminant of the Néron-Severi lattice
of $Y_{23}$ is $-2\cdot3\cdot11\cdot17$ whereas for $Y_{73}$, it
is $-6$, thus (see Proposition \ref{prop:The-surface-3conics} and
its proof) the surface $Y$ has Picard number $3$ and is an example
of surface of type $S_{3}$. 
\end{example}

\subsection{Surfaces of type $S_{4}$\label{subsec:ExamplesS4}}

Let $Y$ be a surface of which is the double cover of the plane $\pi:Y\to\PP^{2}$
branched over a sextic curve $C_{6}$ that possess a tritangent line
$L$ and a $6$-tangent conic $C_{o}$. Let $B_{1}+B_{2}=\pi^{*}L$
and $B_{3}+B_{4}=\pi^{*}C_{o}$ be the $(-2)$-curves over the line
and the conic. 
\begin{prop}
There are two possibilities for the intersection matrix of the $(-2)$-curves
$B_{1},\dots,B_{4}$. Either it is 
\[
M_{1}=\left(\begin{array}{cccc}
-2 & 3 & 1 & 1\\
3 & -2 & 1 & 1\\
1 & 1 & -2 & 6\\
1 & 1 & 6 & -2
\end{array}\right)\text{ or it is }M_{2}=\left(\begin{array}{cccc}
-2 & 3 & 0 & 2\\
3 & -2 & 2 & 0\\
0 & 2 & -2 & 6\\
2 & 0 & 6 & -2
\end{array}\right).
\]
Suppose that $Y$ has Picard number $3$ and the intersection matrix
is $M_{1}$. Then $Y$ is a K3 surface of type $S_{4}$. \\
If $Y$ has Picard number $3$ and the intersection matrix is $M_{2}$,
then the $(-2)$-curves $B_{1},\dots,B_{4}$ are the only $(-2)$-curves
on $Y$, this is case $S_{113}$ in \cite[Table 3]{Nikulin2}. 
\end{prop}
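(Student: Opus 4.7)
The plan is to mimic the argument used for $S_{3}$ in Proposition \ref{prop:Construction-S3}.

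First, I would determine the intersection numbers from the geometry of the double cover. Since $L$ is a line and $C_{o}$ a conic, the projection formula gives $(\pi^{*}L)^{2}=2$, $(\pi^{*}C_{o})^{2}=8$ and $(\pi^{*}L)(\pi^{*}C_{o})=4$, which combined with $B_{i}^{2}=-2$ yield $B_{1}B_{2}=3$ and $B_{3}B_{4}=6$. The covering involution $\iota$ exchanges $B_{1}\leftrightarrow B_{2}$ and $B_{3}\leftrightarrow B_{4}$, hence $B_{1}(B_{3}+B_{4})=B_{2}(B_{3}+B_{4})$, and combined with the above this forces both to equal $2$. Setting $a:=B_{1}B_{3}\in\ZZ_{\geq0}$, effectivity gives $a\in\{0,1,2\}$, and after possibly swapping $B_{3}\leftrightarrow B_{4}$ we reduce to $a\in\{0,1\}$, producing exactly the two matrices $M_{1}$ and $M_{2}$.

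Second, because $C_{o}\sim 2L$ in $\Pic\PP^{2})$, pulling back by $\pi$ yields $B_{3}+B_{4}=2(B_{1}+B_{2})$ in $\NS Y)$ in both cases, so the sublattice generated by $B_{1},\ldots,B_{4}$ has rank $3$, generated for instance by $(B_{1},B_{2},B_{3})$. In case $M_{1}$, the Gram matrix of this triple has discriminant $20$ and, under the relabeling $(A_{1},A_{2},A_{3})=(B_{1},B_{3},B_{2})$, is isometric to that of $S_{4}$ from Section 3.4. Hence when the Picard number is $3$, $\NS Y)$ is an even overlattice of $S_{4}$ of index $n$ with $n^{2}\mid 20$, forcing $n\in\{1,2\}$. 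A short Smith-normal-form computation shows $A(S_{4})\simeq\ZZ/20\ZZ$, and its unique element of order $2$ is non-isotropic in the discriminant form $\pmod{2\ZZ}$; hence no proper index-$2$ even overlattice exists and $\NS Y)\simeq S_{4}$. The analysis of $S_{4}$ in Section 3.4 then forces $B_{1},\ldots,B_{4}$ to be all the $(-2)$-curves of $Y$, so $Y$ is of type $S_{4}$.

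Third, in case $M_{2}$ the Gram matrix of $(B_{1},B_{2},B_{3})$ has discriminant $18$, so the lattice is not isomorphic to any of $S_{1},\ldots,S_{6}$; I would locate it among the rank-$3$ lattices of \cite[Table 3]{Nikulin2} and run the same overlattice bookkeeping as in the proof of Proposition \ref{prop:Construction-S3} for $S_{1,1,4}$, ruling out the admissible overlattices on the grounds that they correspond in Nikulin's list to surfaces containing strictly fewer than four $(-2)$-curves, incompatible with the four we have already exhibited. Then $\NS Y)$ equals this lattice, and its tabulated $(-2)$-curve count of four forces the $B_{j}$ to be all of them. Existence of both matrices is witnessed by explicit models: $M_{1}$ by the sextic families produced in the subsequent constructions of Section \ref{sec:Examples}, and $M_{2}$ by the elliptically fibred surface studied in \cite{Roulleau} (note that $(B_{1}+B_{4})^{2}=0$ in $M_{2}$, making an elliptic pencil manifest). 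The main obstacle is the $M_{2}$ step: pinpointing the discriminant-$18$ entry of Nikulin's Table 3 and verifying by discriminant-form computations that no proper overlattice is compatible with the configuration, since the fundamental domain is no longer compact and the direct enumeration of $(-2)$-curves used for $S_{4}$ is unavailable.
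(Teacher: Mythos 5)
Your proposal is correct and follows essentially the same route as the paper: derive the two matrices from the covering involution, identify the $M_1$ lattice as $S_4$ and exclude overlattices via the discriminant group (the paper notes the order-$20$ discriminant group has no non-trivial isotropic element), and in the $M_2$ case locate the discriminant-$18$ lattice in Nikulin's Table 3 (it is the lattice $S_{1,1,3}$) and exclude its unique overlattice because a surface with that N\'eron--Severi group has only three $(-2)$-curves, fewer than the four exhibited. The only difference is that you leave this last identification as a plan rather than carrying it out, but it is precisely the step the paper performs, so there is no genuine gap.
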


In the second case the surface $Y$ has not a bounded fundamental
domain (see \cite{Nikulin2}). In fact, as it can be read of from
the intersection matrix, the surface $Y$ has two elliptic fibrations
(see \cite{Roulleau}). Both moduli of K3 surfaces have the same dimension
i.e. $17$. 
\begin{proof}
It is easy to check that up to permuting $B_{3}$ and $B_{4}$, the
matrices $M_{1}$ and $M_{2}$ are the only possibilities for the
intersection of the curves $B_{j}$. Let us suppose that $Y$ has
Picard number $3$. If the intersection matrix is $M_{1}$ then the
lattice generated by the $(-2)$-curves is $S_{4}$; the discriminant
group of $S_{4}$ (of order $20$) has no non-trivial isotropic element,
thus there is no over-lattice containing $S_{4}$ and $\NS Y)\simeq S_{4}$.\\
 Suppose that the intersection matrix is $M_{2}$. The divisor $D=B_{1}+B_{2}$
is ample with $D^{2}=2$ and the degrees of $B_{1},B_{2},B_{3},B_{4}$
with respect to $D$ are $1,1,2,2$ respectively. The lattice generated
by the $(-2)$-curves is the lattice denoted $S_{113}$ in \cite[Table 3]{Nikulin2}.
There is only one over-lattice of $S_{113}$, but by \cite[Table 3]{Nikulin2},
a surface with that lattice has only $3$ $(-2)$-curves, therefore
$\NS Y)\simeq S_{113}$.
\end{proof}
It is not so easy to find an example of a K3 surface with Néron-Severi
lattice $S_{4}$. We found this one:
\begin{example}
Let $f_{3},q_{2},q_{4}$ be the forms 
\[
\begin{array}{c}
f_{3}=-xy^{2}+2y^{3}+5x^{2}z+3xyz+yz^{2}-6z^{3},\,\,\,\,q_{2}=2x^{2}+xy+2y^{2}-3z^{2},\\
q_{4}=-3x^{4}+11x^{3}y+13x^{2}y^{2}+17xy^{3}+2y^{4}+15x^{2}yz+4xy^{2}z-y^{3}z\hfill\\
+14x^{2}z^{2}-3xyz^{2}-2y^{2}z^{2}+2yz^{3}-12z^{4}.\hfill
\end{array}
\]
The conic $\{q_{2}=0\}$ is $6$-tangent to the sextic $C_{6}=\{q_{2}q_{4}-f_{3}^{2}=0\}$.
Moreover one can check that the line $L=\{y=0\}$ is tritangent to
$C_{6}$ at points $(-1:0:1),\,(0:0:1),\,(1:0:1)$. Let $Y\to\PP^{2}$
be the double cover branched over $C_{6}$. Let $Y_{q}$ be the reduction
in finite field $F_{q}$ of the surface $Y$. Using Magma \cite{magma},
one computes that surfaces $Y_{31^{2}}$ and $Y_{53^{2}}$ have Picard
number $4$, and this is also their geometric Picard number. Moreover,
the class in $\QQ^{\times}/(\QQ^{\times})^{2}$ of the discriminant
of the Néron-Severi lattice of $Y_{31^{2}}$ is $-83$, whereas the
class of the discriminant of $Y_{53^{2}}$ is $-5\cdot53$, therefore,
as in Proposition \ref{prop:The-surface-3conics}, the surface $Y$
has Picard number $3$. One can check moreover that the intersection
of the $(-2)$-curves on $Y$ is (up to ordering) the matrix $M_{1}$,
thus $Y$ has type $S_{4}$. 
\end{example}

\subsection{Construction of surfaces of type $S_{5}$\label{subsec:ExamplesS5}}

Let $Y$ be the double cover of the plane branched over a smooth sextic
curve which has two tritangent lines $L_{1},L_{2}$.
\begin{prop}
\label{prop:S5ok}Suppose that the Picard number of $Y$ is $3$.
Then the Néron-Severi lattice of the K3 surface $Y$ is isometric
to $S_{5}$.
\end{prop}

\begin{proof}
Let $\pi:Y\to\PP^{2}$ be the double cover and let $B_{1},\dots,B_{4}$
be the $(-2)$-curves such that 
\[
\pi^{*}L_{1}=B_{1}+B_{2},\,\pi^{*}L_{2}=B_{3}+B_{4}.
\]
We must have $(B_{1}+B_{2})^{2}=2$ and $(B_{3}+B_{4})^{2}=2$, thus
$B_{1}B_{2}=B_{3}B_{4}=3$. Moreover using the involutions from the
double plane structure, one find relations of type
\[
B_{1}(B_{3}+B_{4})=\frac{1}{2}\pi^{*}L_{1}\pi^{*}L_{2}=1.
\]
Finally, up to permuting $B_{3}$ and $B_{4}$, the intersection matrix
$(B_{i}B_{j})_{i,j}$ is the one given for $S_{5}$. Thus the rank
$3$ Néron-Severi lattice of $Y$ contains $S_{5}$. Since the discriminant
group of $S_{5}$ has no non-trivial isotropic element, we have the
reverse inclusion.
\end{proof}
Let $l_{1},l_{2},f_{3},q_{4}$ be two linear forms, a cubic form
and a quartic and let $f_{6}=l_{1}l_{2}q_{4}-f_{3}^{2}.$ Suppose
that these forms are generic so that the sextic curve $C_{6}=\{f_{6}=0\}$
is smooth. Then the sextic and the line $L_{i}=\{l_{i}=0\}$ are tangent
at the $3$ intersection points of $L_{i}$ and the cubic $E=\{f_{3}=0\}$.
Conversely, we have:
\begin{thm}
\label{prop:PlaneSexticTwoTritangent}Let $C=\{f_{6}=0\}$ be a generic
smooth plane sextic curve which possesses two tritangent lines $L_{1}=\{l_{1}=0\}$,
$L_{2}=\{l_{2}=0\}$. There exists a quartic form $q_{4}$ and a cubic
form $f_{3}$ such that $f_{6}=l_{1}l_{2}q_{4}-f_{3}^{2}.$
\end{thm}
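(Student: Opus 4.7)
The plan is to produce a cubic form $f_3$ on $\PP^2$ such that $f_6 + f_3^2$ is divisible by $l_1 l_2$; then the quotient automatically gives the required quartic $q_4$, because $f_6 = l_1 l_2 q_4 - f_3^2$ is equivalent to $f_6 + f_3^2 \equiv 0 \pmod{l_1 l_2}$. I will construct $f_3$ by first solving the problem on each line $L_i$ separately and then patching at the intersection point.

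First I restrict $f_6$ to $L_i \simeq \PP^1$. By hypothesis $L_i$ is tritangent to $C$, so the restriction $f_6|_{L_i}$ is a degree-$6$ form on $L_i$ with three double zeros, hence a nonzero scalar multiple of the square of a cubic form on $L_i$. Working over $\CC$, every scalar has a square root, so I may write
\[
-\,f_6|_{L_i} = \tilde g_i^{\,2}
\]
for some cubic form $\tilde g_i$ on $L_i$ (the minus sign is chosen to match the sign in the target identity; the cubic $\tilde g_i$ is defined up to an overall sign). Both cubics $\tilde g_i$ are lifts of restrictions, so they can be extended to cubic forms on $\PP^2$.

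Next I combine the two cubics into a single cubic form $f_3$ on $\PP^2$ with $f_3|_{L_i} = \tilde g_i$ for $i = 1, 2$. Let $p = L_1 \cap L_2$. Evaluating the defining relations at $p$ gives $\tilde g_1(p)^2 = -f_6(p) = \tilde g_2(p)^2$, so $\tilde g_1(p) = \pm \tilde g_2(p)$; replacing $\tilde g_2$ by $-\tilde g_2$ if necessary (which preserves $\tilde g_2^{\,2} = -f_6|_{L_2}$) I may assume $\tilde g_1(p) = \tilde g_2(p)$. Choose any cubic $g_1$ on $\PP^2$ with $g_1|_{L_1} = \tilde g_1$; then $\tilde g_2 - g_1|_{L_2}$ is a cubic on $L_2$ vanishing at $p$, hence divisible by the restriction of $l_1$ to $L_2$. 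Lifting the cofactor to a quadratic form $Q$ on $\PP^2$, the cubic $f_3 := g_1 + l_1 \cdot Q$ restricts to $\tilde g_1$ on $L_1$ (because $l_1$ vanishes there) and to $\tilde g_2$ on $L_2$ by construction.

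Finally, by construction $f_3^{\,2} + f_6 \equiv \tilde g_i^{\,2} - \tilde g_i^{\,2} = 0 \pmod{l_i}$ for each $i$, so $f_3^{\,2} + f_6$ is divisible by both $l_1$ and $l_2$; since these are coprime linear forms, it is divisible by $l_1 l_2$, and the quotient is the desired quartic $q_4$. The only nontrivial point in the argument is the sign reconciliation at $p$, which is resolved by the inherent sign ambiguity of the square roots $\tilde g_i$; genericity of $C_6$ ensures in particular that $p \notin C_6$ (so $f_6(p) \neq 0$), but the construction even goes through when $p$ lies on $C_6$, since then $\tilde g_1(p) = \tilde g_2(p) = 0$ automatically. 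All remaining steps are routine divisibility and dimension arguments.
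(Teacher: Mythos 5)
Your construction is correct, and it is a genuinely different argument from the one in the paper. The paper proves this statement indirectly, by a moduli/dimension count: it computes the fibers of the map $(l_{1},l_{2},q_{4},f_{3})\mapsto l_{1}l_{2}q_{4}-f_{3}^{2}$ (the scaling action together with the substitutions $f_{3}\mapsto\pm(f_{3}+l_{1}l_{2}l)$, $q_{4}\mapsto q_{4}+2lf_{3}+l_{1}l_{2}l^{2}$ for a linear form $l$), finds that the sextics of this shape form a $17$-dimensional family, and identifies it with the $17$-dimensional irreducible moduli space of $S_{5}$-surfaces, i.e.\ of sextics with two tritangent lines; this is why the paper's statement carries the word ``generic''. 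Your proof instead constructs $f_{3}$ directly: since all intersection multiplicities of $L_{i}$ with $C$ are even, $-f_{6}|_{L_{i}}$ is the square of a binary cubic $\tilde g_{i}$; after the sign adjustment at $p=L_{1}\cap L_{2}$ (which you handle correctly, including the degenerate case $p\in C_{6}$) the two restrictions glue to a cubic form $f_{3}$ on $\PP^{2}$, and then $f_{6}+f_{3}^{2}$ vanishes on both lines, hence is divisible by the coprime forms $l_{1}l_{2}$, with quartic quotient $q_{4}$. Your route is more elementary and strictly stronger — it needs no genericity and works for every smooth sextic with two tritangent lines (even when the tangency pattern is, say, $4+2$ rather than three simple tangencies, since the restriction is still a perfect square). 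What the paper's approach buys in exchange is the explicit description of the fiber of $\Phi$, i.e.\ the non-uniqueness $f_{3}\mapsto\pm(f_{3}+l_{1}l_{2}l)$, $q_{4}\mapsto q_{4}+2lf_{3}+l_{1}l_{2}l^{2}$, which the paper reuses afterwards (to produce the pencils of splitting quartics giving $|C_{1}|$ and $|C_{2}|$, and for the unirationality count); if you wanted that extra information you would have to supplement your argument with the same fiber analysis.
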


\begin{proof}
Let us study the dimension of the moduli space of smooth sextics $C_{6}$
which admit equations of the form $f_{6}=l_{1}l_{2}q_{4}-f_{3}^{2}$. 

A K3 surface $Y$ which is the double cover of the plane branched
over a generic equation of the form $l_{1}l_{2}q_{4}-f_{3}^{2}=0$
has Néron-Severi lattice of rank $\geq3$. By the same argument as
in the proof of Theorem \ref{thm:S1-equation-of-C6}, Example \ref{exa:example-S5}
below shows that the generic such surface $Y$ has Picard number $3$,
and therefore $\NS Y)\simeq S_{5}$. In particular, the lines $l_{i}=0,\,i\in\{1,2\}$
are determined by $Y$ : if $(l,l',f,q)$ are forms of degree $1,1,3$
and $4$ respectively such that $l_{1}l_{2}q_{4}-f_{3}^{3}=ll'q-f^{3}$,
then there exists scalars $\l,\mu$ such that $l=\l l_{1},l'=\mu l_{2}$.
The map $\Phi$ which to a quadruple $(l_{1},l_{2},q_{4},f_{3})$
associate $l_{1}l_{2}q_{4}-f_{3}^{2}$ is invariant under the action
of the two dimensional family of automorphisms $\G:(l_{1},l_{2},q_{4},f_{3})\to(\l l_{1},\mu l_{2},\frac{1}{\l\mu}q_{4},f_{3})$,
$\l,\mu\in\CC^{*}$. Using such map $\G$, one can suppose $l=l_{1},l'=l_{2}$,
and we have 
\begin{equation}
l_{1}l_{2}q_{4}-f_{3}^{2}=l_{1}l_{2}q-f^{2},\label{eq:egalite2f6}
\end{equation}
if and only if 
\[
l_{1}l_{2}(q_{4}-q)=(f_{3}+f)(f_{3}-f).
\]
Suppose there exists some quadric forms $c_{1},c_{2}$ such that 
\[
l_{1}c_{1}=(f_{3}+f),\,\,l_{2}c_{2}=(f_{3}-f),
\]
then $f_{3}=\frac{1}{2}(l_{1}c_{1}+l_{2}c_{2})$, but the curve $C_{6}$
would be singular at the intersection point of $L_{1}$ and $L_{2}$,
thus that case cannot happen. Up to exchanging $f$ with $-f$ and
after a short computation, it turns out that equation \eqref{eq:egalite2f6}
is equivalent to the existence of a linear form $l$ such that 
\[
\left\{ \begin{array}{c}
f=\pm(f_{3}+l_{1}l_{2}l)\\
q=(q_{4}+2lf_{3}+l_{1}l_{2}l^{2}).
\end{array}\right.
\]
The map $\Phi$ is a morphism from a $31=3+3+15+10$ dimensional vector
space with $5$ dimensional fibers (because of the choice of $l\in H^{0}(\PP^{2},\OO(2))$
and the above parameters $\l,\mu$), thus the image of $\Phi$ is
a $26$ dimensional variety in $H^{0}(\PP^{2},\OO_{\PP^{2}}(6))$.
The moduli of smooth sextics curves which admits equations of the
form $f_{6}=l_{1}l_{2}q_{4}-f_{3}^{2}$ is the quotient of that variety
by $GL_{3}(\CC)$, thus it is $17$ dimensional. By Proposition \ref{prop:S5ok},
the double cover of the plane branched over the generic curve $C_{6}$
is a K3 surface of type $S_{5}$. The moduli space of K3 surfaces
of type $S_{5}$ is irreducible and $17$ dimensional. This moduli
is also the moduli of sextic curves which possess two tritangent lines,
thus the result.
\end{proof}
From the proof of Theorem \ref{prop:PlaneSexticTwoTritangent} one
gets a dominant rational map from a $31$ dimensional vector space
to the moduli space of sextic with two tritangent lines, and therefore
to the moduli of K3 surface of type $S_{5}$. Thus we obtain
\begin{cor}
The moduli space of K3 surfaces of type $S_{5}$ is unirational.
\end{cor}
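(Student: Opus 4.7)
The plan is to exhibit an explicit rational parametrization of the moduli space by means of the data $(l_1,l_2,q_4,f_3)$ used in Theorem \ref{prop:PlaneSexticTwoTritangent}.

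First I would consider the parameter space
\[
\mathcal{P} = H^0(\PP^2,\OO_{\PP^2}(1))^{\oplus 2} \oplus H^0(\PP^2,\OO_{\PP^2}(3)) \oplus H^0(\PP^2,\OO_{\PP^2}(4)),
\]
which is an affine space (hence rational). Inside $\mathcal{P}$ there is a non-empty Zariski open subset $\mathcal{U}$ on which the sextic $C_6 = \{l_1 l_2 q_4 - f_3^2 = 0\}$ is smooth, the two lines $L_i=\{l_i=0\}$ are distinct and genuinely tritangent to $C_6$, and the double cover of $\PP^2$ branched over $C_6$ has Picard number exactly $3$; by Proposition \ref{prop:S5ok} this cover is then a K3 surface with N\'eron--Severi group isomorphic to $S_5$. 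The assignment sending $(l_1,l_2,q_4,f_3) \in \mathcal{U}$ to the isomorphism class of this double cover gives a morphism $\Phi : \mathcal{U} \to \mathcal{M}_{S_5}$, where $\mathcal{M}_{S_5}$ denotes the moduli space of K3 surfaces of type $S_5$.

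Next I would argue that $\Phi$ is dominant. By Theorem \ref{prop:PlaneSexticTwoTritangent}, the generic smooth plane sextic admitting two tritangent lines lies in the image of the map $(l_1,l_2,q_4,f_3) \mapsto l_1 l_2 q_4 - f_3^2$. By Subsection \ref{subsec:N=0000E9ron-Severi-S5} and Proposition \ref{prop:S5ok}, the double cover of $\PP^2$ branched along a generic such sextic is exactly a K3 surface of type $S_5$, and conversely every surface of type $S_5$ arises in this way via its unique polarization $D$ with $D^2=2$. Composing these two observations shows that $\Phi(\mathcal{U})$ contains a Zariski open subset of $\mathcal{M}_{S_5}$.

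Since $\mathcal{U}$ is an open subset of an affine space, it is rational, and the dominant rational map $\Phi : \mathcal{U} \dashrightarrow \mathcal{M}_{S_5}$ exhibits $\mathcal{M}_{S_5}$ as unirational. The only step requiring any real care is confirming that the generic fiber of the assignment $(l_1,l_2,q_4,f_3)\mapsto C_6$ has the expected dimension so that the image is full-dimensional; but this has already been carried out inside the proof of Theorem \ref{prop:PlaneSexticTwoTritangent}, where the image was computed to have the same dimension $17$ as the target moduli space, so this hard point is handled for free and the corollary follows.
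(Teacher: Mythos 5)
Your proof is correct and essentially identical to the paper's: the corollary is deduced from Theorem \ref{prop:PlaneSexticTwoTritangent} together with Proposition \ref{prop:S5ok}, by noting that the (rational) affine space of quadruples $(l_{1},l_{2},q_{4},f_{3})$ dominates the $17$-dimensional irreducible moduli space of K3 surfaces of type $S_{5}$. The only imprecision is your claim that ``Picard number exactly $3$'' holds on a Zariski open subset of the parameter space (it holds only off countably many proper closed subsets, i.e.\ at a very general point), but this does not affect the conclusion, since dominance of the map to the moduli space already follows from the dimension count carried out in the proof of that theorem.
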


Let $C_{6}=\{l_{1}l_{2}q_{4}-f_{3}^{2}=0\}$ be a plane sextic curve
with two tritangent lines and let $\pi:X\to\PP^{2}$ be the double
cover branched over $C_{6}$. 
\begin{rem*}
The sextic curve $C_{6}$ is tangent to the quartic curve $Q_{4}=\{q_{4}=0\}$
at the $12$ intersection points of $Q_{4}$ with $E=\{f_{3}=0\}$.
The pull back $\pi^{*}Q_{4}$ splits: $\pi^{*}Q_{4}=C_{1}+C_{2}$
with $C_{1}^{2}=C_{2}^{2}=4$ and $C_{1}C_{2}=12$. If the Picard
number of $X$ is 3, one can check that the numerical equivalence
classes of $C_{1}$ and $C_{2}$ are the two big and nef divisors
of square $4$ on $X$ (see Section \ref{subsec:N=0000E9ron-Severi-S5}),
and thus the linear systems $|C_{1}|$ and $|C_{2}|$ gives the two
morphisms $X\to\PP^{3}$. From the proof of Proposition \ref{prop:PlaneSexticTwoTritangent},
the pull-back of every quartic of the form $q_{4}+2lf_{3}+l_{1}l_{2}l^{2}=0$
(where $l$ is a linear form) also splits; that gives the linear systems
$|C_{1}|$ and $|C_{2}|$.
\end{rem*}
In order to complete the proof of Theorem \ref{prop:PlaneSexticTwoTritangent},
let us give the following Example:
\begin{example}
\label{exa:example-S5}By taking 
\[
\begin{array}{c}
l_{1}=5x+z,\,\,l_{2}=2z,\,\,f_{3}=13x^{2}y+7xy^{2}+4y^{3}+14x^{2}z+xyz+20y^{2}z+8xz^{2},\\
q_{4}=9x^{4}+4x^{3}y+2x^{2}y^{2}+6xy^{3}+2y^{4}+12x^{3}z+6x^{2}yz+11xy^{2}z\hfill\\
+7y^{3}z+17x^{2}z^{2}+5xyz^{2}+16y^{2}z^{2}+16xz^{3}+3yz^{3}+8z^{4},
\end{array}
\]
we obtain a smooth sextic curve $C_{6}=\{l_{1}l_{2}q_{4}-f_{3}^{2}=0\}$.
Let $Y$ be the double cover of the plane branched over $C_{6}$.
The $\cu$-curves above the lines $l_{i}=0$ generate a lattice isometric
to $S_{5}$. The reduction of $Y$ modulo $29$ and $53$ have Picard
number $4$, and this is also the geometric Picard number. By using
the same technics as in Proposition \ref{prop:The-surface-3conics},
we get in the first case that the class of the discriminant in $\QQ^{\times}/(\QQ^{\times})^{2}$
is $11\cdot41$ and in the second case that the class of the discriminant
is $59$, thus the surface $Y$ cannot have Picard number equal to
$4$ and it is a surface of type $S_{5}$. 
\end{example}

\subsection{Construction of surfaces of type $S_{6}$}

Let $Y\to\PP^{2}$ be the double cover of the plane branched over
a generic sextic $C_{6}$ which possesses a $6$-tangent conic $Q_{1}$
and two cuspidal cubics $G_{1},G_{2}$ such that the irreducible components
of the pull-back $\pi^{*}G_{1}=B_{3}+B_{4}$, $\pi^{*}G_{2}=B_{5}+B_{6}$
are smooth $(-2)$-curves. 
\begin{prop}
The surface $Y$ has type $S_{6}$.
\end{prop}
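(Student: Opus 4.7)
The plan is to follow the template of Propositions~\ref{prop:Picard3NeronS1}--\ref{prop:S5ok}: compute explicitly all pairwise intersections of the six $(-2)$-curves $B_1,\dots,B_6$ lying above $Q_1$, $Cu_1$, $Cu_2$, identify the resulting Gram matrix with the one for $S_6$ given in Section~\ref{subsec:Surfaces-with-N=0000E9ron-SeveriS6}, and then rule out any proper over-lattice. Throughout I interpret ``generic'' as meaning $\rho(Y)=3$, which is the standard output of a van Luijk-type argument analogous to Example~\ref{exa:Costa}.

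First I set $L=\pi^{*}(\text{line})\in\NS Y)$, so that $L^{2}=2$ and by the projection formula $L\cdot B_{1}=L\cdot B_{2}=2$ and $L\cdot B_{i}=3$ for $i\in\{3,\dots,6\}$. Using $B_{i}^{2}=-2$ and expanding $(\pi^{*}Q_{1})^{2}=8$, $(\pi^{*}Cu_{j})^{2}=18$, one obtains $B_{1}B_{2}=6$ and $B_{3}B_{4}=B_{5}B_{6}=11$ (the value $11$ is compatible with the local cusp analysis of the previous proposition: $3$ contributed by each cusp and $1$ by each of the $8$ smooth tangency points). Denoting by $\iota$ the covering involution, $\iota^{*}L=L$ gives numerically $B_{2}\equiv 2L-B_{1}$, $B_{4}\equiv 3L-B_{3}$, $B_{6}\equiv 3L-B_{5}$, hence $B_{1}B_{3}=B_{2}B_{4}$, $B_{1}B_{4}=B_{2}B_{3}$, and similarly for the other pairs. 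Writing $a=B_{1}B_{3}$, $b=B_{1}B_{5}$, $c=B_{3}B_{5}$, the mixed-degree identities $B_{1}\cdot(B_{3}+B_{4})=B_{1}\cdot 3L=6$, $B_{1}\cdot(B_{5}+B_{6})=6$ and $B_{3}\cdot(B_{5}+B_{6})=9$ yield the ranges $0\leq a,b\leq 6$, $0\leq c\leq 9$ and determine the remaining intersections as $6-a$, $6-b$, $9-c$.

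Since $\rho(Y)=3$, the Gram matrix of the four vectors $(L,B_{1},B_{3},B_{5})$ is singular, and expanding its $4\times 4$ determinant gives the single polynomial relation
\[
13a^{2}+13b^{2}+8c^{2}+4abc-18ab-12ac-12bc-24a-24b-36c-104=0.
\]
The admissible relabelings (the two component swaps $B_{3}\leftrightarrow B_{4}$, $B_{5}\leftrightarrow B_{6}$ and the exchange $Cu_{1}\leftrightarrow Cu_{2}$) act on $(a,b,c)$ by $(6-a,b,9-c)$, $(a,6-b,9-c)$ and $(b,a,c)$, so up to this group I may impose $a\leq b\leq 3$. A finite check, solving for each such pair $(a,b)$ the quadratic in $c$ coming from the relation above, shows that the only integer solution in the allowed range is $(a,b,c)=(1,1,9)$.

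With this value the resulting $6\times 6$ intersection matrix of $(B_{1},\dots,B_{6})$ coincides, after the single relabeling $B_{5}\leftrightarrow B_{6}$, with the matrix of the six $(-2)$-curves on a K3 surface of type $S_{6}$ displayed in Section~\ref{subsec:Surfaces-with-N=0000E9ron-SeveriS6}; hence $\NS Y)\supseteq S_{6}$. To conclude, as in the earlier cases I verify that the discriminant group $A(S_{6})=S_{6}^{*}/S_{6}$, of order $|\det S_{6}|=44$, admits no nontrivial isotropic vector for the induced discriminant form, so $S_{6}$ has no nontrivial even over-lattice and $\NS Y)=S_{6}$. The only real obstacle in this strategy is the diophantine enumeration leading to the triple $(1,1,9)$; it is bounded and mechanical, but it is the step where one must carefully exclude spurious solutions arising from non-admissible labelings.
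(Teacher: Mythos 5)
Your proof is correct and takes essentially the same route as the paper: both determine the possible intersection numbers of the six curves over the conic and the two cuspidal cubics from the double-cover structure, use the genericity assumption $\rho(Y)=3$ to impose a rank/determinant condition, and find that the unique admissible solution (your $(a,b,c)=(1,1,9)$, the paper's $u=v=1$, $a=9$) forces the $S_{6}$ intersection matrix. Your $4\times4$ determinant relation and the finite enumeration check out, and your final discriminant-form argument (order $44$, no nontrivial isotropic element, hence no proper even over-lattice) makes explicit a step the paper's proof leaves implicit.
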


\begin{proof}
Let $B_{1},B_{2}$ be the $(-2)$-curves above the conic $Q_{1}$.
The intersection matrix of the curves $B_{1},\dots,B_{6}$ is 
\[
\left(\begin{array}{cccccc}
-2 & 6 & u & \bar{u} & \bar{v} & v\\
6 & -2 & \bar{u} & u & v & \bar{v}\\
u & \bar{u} & -2 & 11 & \tilde{a} & a\\
\bar{u} & u & 11 & -2 & a & \tilde{a}\\
\bar{v} & v & \tilde{a} & a & -2 & 11\\
v & \bar{v} & a & \tilde{a} & 11 & -2
\end{array}\right)
\]
where $\bar{x}=6-x$ and $\tilde{a}=9-a$, and moreover $u,v\in\{0,...,6\}$,
$a\in\{0,...,9\}$. Up to exchanging $B_{3}$ with $B_{4}$ and $B_{5}$
with $B_{6}$, we can suppose that $u\leq3$ and $v\leq3$. One can
then check that this matrix has rank $3$ if and only if $u=v=1$
and $a=9$. This is then the intersection matrix of the Néron-Severi
lattice of a K3 surface of type $S_{6}$. 
\end{proof}
Let us take the notations of sub-section \ref{subsec:Surfaces-with-N=0000E9ron-SeveriS6}.
The divisor $D_{6}=A_{1}+A_{5}$ is very ample, of square $6$, with
\[
D_{6}A_{j}=3,7,1,14,3,12,\text{ for }j=1,\dots,6.
\]
The linear system $|D_{6}|$ defines a degree $6$ model in $\PP^{4}$
for which the curves $A_{1},A_{5}$ are rational normal cubic curves
contained in an hyperplane section, and $A_{3}$ is a line, which
meets $A_{1}$ but not $A_{5}$. Let us prove
\begin{prop}
The moduli space of K3 surfaces with $\NS X)\simeq S_{6}$ is unirational.
\end{prop}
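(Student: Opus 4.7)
The plan is to parameterize K3 surfaces of type $S_{6}$ by an explicit rational family in $\PP^{4}$ using the very ample divisor $D_{6}=A_{1}+A_{5}$ of square $6$. Since $D_{6}^{2}=6$ and $X$ carries no elliptic pencil, $X$ embeds in $\PP^{4}$ as a K3 surface of degree $6$; a generic such surface is a complete intersection of a quadric and a cubic.

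The first step is to introduce the parameter variety $\mathcal{V}$ of tuples $(H,A_{1},A_{5},A_{3},Q,C)$ in which $H\subset\PP^{4}$ is a hyperplane, $A_{1},A_{5}\subset H$ are rational normal cubic curves whose scheme-theoretic intersection in $H$ has length $5$, $A_{3}\subset\PP^{4}$ is a line meeting $A_{1}$ transversally at a single point outside $H$ and disjoint from $A_{5}$, and $(Q,C)$ is a quadric-cubic pair in $\PP^{4}$ passing through $A_{1}\cup A_{5}\cup A_{3}$ such that $X:=Q\cap C$ is smooth and irreducible. I would then verify that $\mathcal{V}$ is rational: the hyperplanes in $\PP^{4}$ form a projective space, the variety of twisted cubics in $H\cong\PP^{3}$ is a homogeneous space for $\mathrm{PGL}_{4}(\CC)$ and hence rational, the incidence conditions on $(A_{1},A_{5})$ and $A_{3}$ cut out rational subvarieties, and $Q,C$ vary in projective linear subsystems of the ideal of $A_{1}\cup A_{5}\cup A_{3}$.

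Second, one must show that a generic member of $\mathcal{V}$ yields a K3 surface $X$ with $\NS X)\simeq S_{6}$. Smoothness of $X=Q\cap C$ is open. By construction, the classes of $A_{1},A_{5},A_{3}$ on $X$ satisfy the Gram matrix obtained by extracting the rows and columns with indices $1,3,5$ of the intersection matrix listed at the start of Subsection \ref{subsec:Surfaces-with-N=0000E9ron-SeveriS6}, so they span a rank-$3$ sub-lattice of $\NS X)$ isometric to $S_{6}$. To promote this to an equality of lattices for the generic $X$, I would follow the template of the preceding subsections: produce an explicit example with Picard number exactly $3$ by the reduction-modulo-$p$ and Frobenius argument of Elsenhans--Jahnel (cf.\ Examples \ref{exa:COSTA_S2} and \ref{exa:Costa}), pinning down $\rho(X)=3$; then rule out the possible over-lattices of $S_{6}$ by a discriminant-group analysis analogous to the proofs of Propositions \ref{prop:Picard3NeronS1} and \ref{prop:S5ok}.

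Finally, the classifying map from $\mathcal{V}$ to the moduli space of K3 surfaces of type $S_{6}$ is dominant, because every K3 of type $S_{6}$ arises in this way through the embedding $|D_{6}|$ together with a presentation as a complete intersection containing $A_{1},A_{5},A_{3}$. Combining rationality of $\mathcal{V}$ with the irreducibility of the moduli space recalled earlier in this section yields unirationality. The main obstacle is the second part of the third step: exhibiting an explicit specialization witnessing $\rho=3$ and ruling out over-lattices of $S_{6}$ admitting the same $(-2)$-curve configuration.
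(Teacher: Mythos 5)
Your construction is essentially the paper's: both embed $X$ by the very ample class $D_{6}=A_{1}+A_{5}$ as a $(2,3)$ complete intersection in $\PP^{4}$ containing a configuration of twisted cubics and a line, observe that the parameter space of such configurations together with the quadric--cubic pair is rational, and conclude by dominance onto the irreducible $17$-dimensional moduli space. Two remarks. First, the step you single out as ``the main obstacle'' --- exhibiting an explicit member with $\rho=3$ by reduction modulo $p$ and then excluding over-lattices for the generic member --- is not needed, and the paper does without it: since every K3 surface with $\NS X)\simeq S_{6}$ is embedded by $|D_{6}|$ as such a complete intersection and hence occurs in the family, the classifying map is automatically dominant; the only lattice-theoretic input is that $S_{6}$ admits no non-trivial over-lattice, so the classes of the three curves span a primitively embedded copy of $S_{6}$ on every member of the family. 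Your own final paragraph already contains this dominance argument, so the deferred computation can simply be dropped. Second, the paper's parameter space is leaner than your $\mathcal{V}$: it imposes only one twisted cubic $C_{1}\subset H$ and the line, and recovers the second cubic as the residual curve of the hyperplane section $Y\cap H=C_{1}+C_{5}$; this spares you from proving that the incidence variety of pairs of twisted cubics in $\PP^{3}$ meeting in a length-$5$ scheme is irreducible and rational, which your definition of $\mathcal{V}$ tacitly requires. Finally, a small slip: the line $A_{3}$ cannot meet $A_{1}$ at a point ``outside $H$'', since $A_{1}\subset H$; the intended condition (as in the paper) is that the line is not contained in $H$ and meets $A_{1}$ transversally at one point, while being disjoint from $A_{5}$.
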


\begin{proof}
Let $H\hookrightarrow\PP^{4}$ be a hyperplane and let $C_{1}\hookrightarrow H$
be a rational normal quartic curve. Let $C_{3}$ be a line not contained
in $H$ and that cuts (transversely) $C_{1}$ at one point. The linear
system of quadrics (respectively cubics) containing $C_{1}$ and $C_{3}$
has dimension $5$ (respectively $21$). \\
Let $Y$ be the complete intersection K3 surface of a generic quadric
and cubic in these linear systems. The hyperplane section of $Y$
by $H$ decomposes as $C_{1}+C_{5},$ where $C_{5}$ is a degree $3$
curve such that $C_{5}^{2}=-2$. Since $(C_{1}+C_{5})C_{2}=1$, we
have $C_{2}C_{5}=0$. The intersection matrix of $(C_{1},C_{3},C_{5})$
is the same as the curves $(A_{1},A_{3},A_{5})$ in sub-section \ref{subsec:Surfaces-with-N=0000E9ron-SeveriS6},
thus the lattice $L_{1}$ in $\NS Y)$ generated by $C_{1},C_{2},C_{3}$
is isometric to $S_{6}$. The embedding $L_{1}\hookrightarrow\NS Y)$
is primitive since $L_{1}$ has no over-lattice. The family of K3
surfaces $Y$ we constructed is unirational. Moreover it contains
the family of surfaces with $\NS Y)\simeq S_{6}$ as an open subset,
since surfaces with $\rho>3$ and containing strictly $L_{1}$ form
an hypersurface in the moduli space of K3 with a polarization $L_{1}\hookrightarrow\NS Y)$.
We therefore conclude that the moduli of K3 surfaces with $\NS X)\simeq S_{6}$
is unirational.
\end{proof}

\subsection{Surfaces with Picard number $4$ of type $L(24)$}

Let $C_{6}\hookrightarrow\PP^{2}$ be a smooth plane sextic curve
which admits $3$ tritangent lines $L_{j}=\{l_{j}=0\}$ $j\in\{1,2,3\},$
$l_{j}\in H^{0}(\PP^{2},\OO(1))$. Let $\pi^{*}:Y\to\PP^{2}$ be the
double cover branched over $C_{6}$. The pull-back of the lines splits
as $\pi^{*}L_{j}=B_{2j-1}+B_{2j}$ $(j\in\{1,2,3\}$) where $B_{1},\dots,B_{6}$
are $(-2)$-curves. 
\begin{prop}
Suppose that the Picard number of $Y$ is $4$. Up to exchanging $B_{3}$
with $B_{4}$ and $B_{5}$ with $B_{6}$, the intersection matrix
of the curves $B_{1},\dots,B_{6}$ is one of the two following matrices:
\[
M_{1}=\left(\begin{array}{cccccc}
-2 & 3 & 0 & 1 & 0 & 1\\
3 & -2 & 1 & 0 & 1 & 0\\
0 & 1 & -2 & 3 & 0 & 1\\
1 & 0 & 3 & -2 & 1 & 0\\
0 & 1 & 0 & 1 & -2 & 3\\
1 & 0 & 1 & 0 & 3 & -2
\end{array}\right),\,M_{2}=\left(\begin{array}{cccccc}
-2 & 3 & 0 & 1 & 0 & 1\\
3 & -2 & 1 & 0 & 1 & 0\\
0 & 1 & -2 & 3 & 1 & 0\\
1 & 0 & 3 & -2 & 0 & 1\\
0 & 1 & 1 & 0 & -2 & 3\\
1 & 0 & 0 & 1 & 3 & -2
\end{array}\right).
\]
In the first case the K3 surface $Y$ has type $L(24)$ studied in
Section \ref{subsec:Lattice-L24}. \\
In the second case the $(-2)$-curves $B_{1},\dots,B_{6}$ are also
the only $(-2)$-curves on $Y$, which is a surface with finite automorphism
group.
\end{prop}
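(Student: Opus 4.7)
The plan is to first derive the possible intersection matrices by a combinatorial argument, then identify each matrix with a lattice from Vinberg--Nikulin's classification.

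\emph{Step 1: Combinatorial determination of the intersection matrix.} Each line $L_j$ satisfies $L_j^2=1$ on $\mathbb{P}^2$, so $(\pi^*L_j)^2=2$, and since $B_{2j-1}+B_{2j}=\pi^*L_j$ with $B_{2j-1}^2=B_{2j}^2=-2$, we get $B_{2j-1}B_{2j}=3$. For the cross-intersections, $\pi^*L_i\cdot\pi^*L_j=2L_iL_j=2$ for $i\neq j$, while the covering involution $\iota$ swaps $B_{2j-1}\leftrightarrow B_{2j}$, which forces $B_{2i-1}B_{2j-1}=B_{2i}B_{2j}$ and $B_{2i-1}B_{2j}=B_{2i}B_{2j-1}$. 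Setting $a=B_1B_3,\;b=B_1B_5,\;c=B_3B_5$, each in $\{0,1\}$ (since they are nonnegative with $a+(1-a)=1$, etc.), and using the two allowed swaps $B_3\leftrightarrow B_4$ and $B_5\leftrightarrow B_6$, I may normalize $a=b=0$. The remaining free parameter $c\in\{0,1\}$ yields exactly the two matrices $M_1$ (when $c=0$) and $M_2$ (when $c=1$).

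\emph{Step 2: Identification with $L(24)$ in the first case.} In $M_1$ the pairs $(B_1,B_2),(B_3,B_4),(B_5,B_6)$ all sum to the same class $D=\pi^*L_1$ with $D^2=2$, and the three disjointness relations $B_1B_3=B_1B_5=B_3B_5=0$ together with $B_iB_{i+1}=3$ give, under the identification $A_1=B_1,A_4=B_2,A_2=B_3,A_5=B_4,A_3=B_5,A_6=B_6$, exactly the intersection matrix of the six $(-2)$-curves on a K3 of type $L(24)$ described in Section \ref{subsec:Lattice-L24}. The sublattice generated by $B_1,B_2,B_3,B_5$ is therefore isomorphic to $L(24)$. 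Since by Vinberg \cite{Vinberg} the lattice $L(24)$ admits no non-trivial isotropic element in its discriminant group (equivalently, no proper even overlattice occurring as a Néron--Severi lattice of a K3 with the correct signature), and $\mathrm{NS}(Y)$ has rank $4$, one concludes $\mathrm{NS}(Y)\simeq L(24)$.

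\emph{Step 3: Analysis of the second matrix.} For $M_2$, a direct calculation shows the Gram matrix again has rank $4$, so $B_1,B_3,B_5,D$ form a basis of the rank-$4$ sublattice they span, and its discriminant/signature can be read off. I would identify this lattice with one from Nikulin's tables \cite{Nikulin2} of rank-$4$ hyperbolic lattices carrying only finitely many $(-2)$-curves (it is not one of $L(24),L(27)$ since its fundamental domain is non-compact: the matrix $M_2$ exhibits isotropic combinations producing an elliptic fibration). After ruling out proper overlattices by the usual discriminant-group argument (any isotropic glue would, together with the six existing $(-2)$-classes, force a seventh $(-2)$-curve contradicting Nikulin's count), I conclude that $\mathrm{NS}(Y)$ is precisely this lattice. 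Nikulin's classification then gives both desired conclusions: the six $B_i$ are the only $(-2)$-curves and the automorphism group is finite.

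\emph{Step 4: Existence.} For $M_1$, existence is guaranteed by the construction in Section \ref{subsec:Lattice-L24}. For $M_2$, I would produce a sextic with three tritangent lines by a deformation or explicit polynomial construction (analogous to Theorem \ref{prop:PlaneSexticTwoTritangent}, but requiring three lines), then certify by a van Luijk--Elsenhans--Jahnel reduction argument (as in the examples of Section \ref{sec:Examples}) that the resulting double cover has Picard number $4$ and the tritangent lines pull back to curves with intersection pattern $M_2$ rather than $M_1$. The main obstacle is precisely this last point: the combinatorial step is straightforward and the identification with $L(24)$ is a direct matching, but verifying that \emph{both} configurations $M_1$ and $M_2$ actually occur for sextic curves with three tritangent lines — i.e.\ that the moduli space of such sextics is reducible with the second component giving surfaces whose Néron--Severi lattice is the one dictated by $M_2$ — requires an explicit example whose geometric Picard number is certified by reduction modulo primes.
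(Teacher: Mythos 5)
Your Steps 1 and 2 are correct and essentially reproduce the paper's argument: the covering involution forces the cross-intersections into $\{0,1\}$ with complementary pairs summing to $1$, the two allowed swaps normalize $a=b=0$ leaving the single parameter $c$, and in the $c=0$ case the lattice spanned by $B_{1},B_{2},B_{3},B_{5}$ is $L(24)$, which has no proper even overlattice (only index $2$ is numerically possible since the discriminant is $-28$, and all three order-two classes of the discriminant group have $q=-1$), so $\NS Y)\simeq L(24)$ once the Picard number is $4$.

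The genuine gap is in Step 3. You never actually identify the rank-$4$ lattice $\Lambda$ spanned by the curves when the matrix is $M_{2}$; you only announce that you ``would'' locate it in a table, and you point to the wrong table: \cite{Nikulin2} classifies the rank-\emph{three} case, while rank $4$ is Vinberg's classification \cite{Vinberg}. The paper computes that $B_{1},B_{2},B_{3},B_{5}$ is a basis of $\Lambda$, that $\Lambda$ has discriminant $-27$ and is the lattice $L(25)$ of \cite{Vinberg}, isomorphic to $\left[\begin{smallmatrix}0 & 3\\ 3 & -2\end{smallmatrix}\right]\oplus\mathbf{A}_{2}(-1)$; it is from this identification that $2$-reflectivity (finite automorphism group) and the count of exactly six $(-2)$-curves follow. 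Moreover your substitute for the overlattice exclusion --- ``any isotropic glue would force a seventh $(-2)$-curve contradicting Nikulin's count'' --- is not a valid argument: if $\NS Y)$ were a proper overlattice, the relevant $(-2)$-curve count would be that of the overlattice, not of $\Lambda$, so no contradiction is produced. What is actually needed (and what the paper does) is to compute the discriminant group of $\Lambda$, which is $\ZZ/9\ZZ\times\ZZ/3\ZZ$, and check it has no non-trivial isotropic element, so $\NS Y)=\Lambda$. Finally, note that ``both cases exist'' is not part of this proposition, so your Step 4 (and your assessment that existence is the main obstacle) is beside the point; the real missing work is the explicit lattice identification and discriminant-form check in the $M_{2}$ case.
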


\begin{proof}
One has $B_{1}B_{2}=B_{3}B_{4}=B_{5}B_{6}=3$ and by the same computations
as in Section \ref{subsec:ExamplesS5}, the other intersection number
$B_{i}B_{j}$ are in $\{-2,0,1\}$ and up to exchanging $B_{1},B_{2}$
and $B_{3},B_{4}$, one can suppose $B_{1}B_{3}=0,$ which forces
$B_{1}B_{4}=0$, $B_{2}B_{3}=1$ and $B_{2}B_{4}=0$. Also up to exchanging
$B_{5}$ with $B_{6}$, we can suppose that their intersection with
$B_{1},B_{2}$ is as given by the two matrices. It remains two possibilities
for the intersections of $B_{3},B_{4}$ with $B_{5}$ and $B_{6}$,
which give matrices $M_{1}$ and $M_{2}$. The case of matrix $M_{1}$
(the corresponding lattice has discriminant $-28$) is (up to permutation
of the curves $B_{i}$) the intersection matrix given in Section \ref{subsec:Lattice-L24}.
\\
If the intersection matrix is $M_{2}$, the elements $B_{1},B_{2},B_{3},B_{5}$
form a base of the lattice $\L$ generated by the $B_{i}$. The lattice
$\L$ (of discriminant $-27$) is the lattice named $L(25)$ in \cite{Vinberg},
which verifies: 
\[
\L\simeq\left[\begin{array}{cc}
0 & 3\\
3 & -2
\end{array}\right]\oplus\mathbf{A}_{2}(-1).
\]
It has discriminant group $\ZZ/9\ZZ\times\ZZ/3\ZZ$ which has no non-trivial
isotropic element, thus the Néron-Severi lattice of $Y$ is $\L$.
Such a K3 surface also contains only a finite number of $(-2)$-curves
(see \cite{Vinberg}).
\end{proof}
\begin{rem}
If $Y$ is generic and its Néron-Severi lattice is $M_{2}$, one can
prove that the sextic curve possesses an equation of the form $l_{1}l_{2}l_{3}g-f^{2}=0,$
where the $l_{i}$ are the equations of the tritangent lines and $g,f$
are cubic forms (more details are given in \cite{Roulleau}). The
surface $Y$ has two elliptic fibrations coming from the pull-back
of the cubic curve $\{g=0\}$ which is $9$-tangent to the sextic
curve. 
\end{rem}

It is not so easy to find an example of surface of type $L(24)$,
here is an example:
\begin{example}
Let $l_{1},l_{2},q_{4},f$ be the forms 
\[
\begin{array}{c}
l_{1}=x+y+2z,\,\,\,\,l_{2}=-3x+2y+z,\hfill\\
q_{4}:=8x^{4}+x^{3}y+x^{2}y^{2}+3xy^{3}-2y^{4}-20x^{3}z-2x^{2}yz-xy^{2}z+3y^{3}z\\
-12x^{2}z^{2}+xyz^{2}+4yz^{3},\\
f=5x^{3}-3x^{2}y+xy^{2}+4y^{3}+2x^{2}z-3xyz-3y^{2}z+5xz^{2}+4yz^{2}.\hfill
\end{array}
\]
The smooth sextic curve $C_{6}=\{l_{1}l_{2}q_{4}-f^{2}\}$ has three
tritangent lines which are $l_{1}=0$, $l_{2}=0$ and the line $\{y=0\}$.
Let $Y\to\PP^{2}$ be the double cover branched over $C_{6}$. Using
the same technics as in Proposition \ref{prop:The-surface-3conics},
one computes that the Picard number and the geometric Picard number
of its reduction $Y_{13}$ modulo $13$ is $4$. Moreover, by the
Artin-Tate formula, the discriminant of the Néron-Severi lattice of
$Y_{13}$ is $-28$, thus $Y$ is a surface of type $L(24)$ (alternatively,
we computed the intersection matrix of the $6$ $(-2)$-curves one
the lines and checked that they generate the rank $4$ lattice $L(24)$).
\end{example}

\subsection{Surfaces with Picard number $4$ of type $L(27)$}

Let $C_{6}$ be a plane sextic curve which has one tritangent line
$L_{1}=\{l_{1}=0\}$ and three $6$-tangent conics. Let $\pi:Y\to\PP^{2}$
be the double cover branched over $C_{6}$. 
\begin{prop}
The surface $Y$ is either a K3 surface of type $L(27)$ or a K3 surface
such that the Néron-Severi lattice has a base with intersection matrix
\[
M_{2}=\left(\begin{array}{cccc}
-2 & 0 & 0 & 0\\
0 & -2 & 4 & 0\\
0 & 4 & -2 & 1\\
0 & 0 & 1 & -2
\end{array}\right).
\]
Both cases exists; in the second case $Y$ has infinitely many $(-2)$-curves
and automorphisms. 
\end{prop}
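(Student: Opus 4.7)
The plan is to adapt the three-step method of the preceding $L(24)$ proposition: assemble the forced intersection numbers among the eight $(-2)$-curves, enumerate the admissible Gram matrices, and identify the resulting lattices. Write $B_1+B_2=\pi^{*}L_1$ and $B_{2i+1}+B_{2i+2}=\pi^{*}C_i$ for the three $6$-tangent conics $C_i$. From $(\pi^{*}L_1)^2=2$ and $(\pi^{*}C_i)^2=8$ I read off $B_1B_2=3$ and $B_{2i+1}B_{2i+2}=6$; the covering involution forces $B_1(B_{2j+1}+B_{2j+2})=2$ and $B_{2i+1}(B_{2j+1}+B_{2j+2})=4$ for $i\neq j$, each summand being a non-negative integer. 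Because $\pi^{*}C_i=2\pi^{*}L_1$, the three linear relations $B_{2i+1}+B_{2i+2}=2(B_1+B_2)$ force the sublattice $\Lambda\subset\NS Y)$ spanned by the eight curves to have rank at most $5$.

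First I would carry out the combinatorial enumeration. Under the swaps $B_{2i-1}\leftrightarrow B_{2i}$ allowed by the involution, the six free intersection numbers $B_1B_3,B_1B_5,B_1B_7,B_3B_5,B_3B_7,B_5B_7$ range over a finite explicit set (bounded by $2$ for the first three, by $4$ for the last three). A computer search through this list, retaining only those configurations for which the $8\times 8$ Gram matrix has rank exactly $4$---the value forced as soon as $Y$ has a bounded fundamental domain, cf.\ Section \ref{subsec:N=0000E9ron-Severi-hyperbolic}---and for which no further effective $(-2)$-class is unavoidably present, should leave exactly two admissible matrices: after a relabeling of the $B_i$, one of them matches the $L(27)$ Gram matrix of Section \ref{subsec:Lattice-L27}, and the other produces the matrix $M_2$ of the statement.

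In the $L(27)$ case the stated conclusion is Section \ref{subsec:Lattice-L27}. In the $M_2$ case I would first check that the discriminant form of $M_2$ has no non-trivial isotropic element, so that $\NS Y)$ coincides with the rank-$4$ lattice given by $M_2$, and then exhibit the isotropic vector $v=e_2+e_3+2e_4$, whose self-intersection is $-2-2-8+8+4=0$. This class supplies an elliptic fibration on $Y$, and by Sterk's theorem \cite{Sterk} the group $\aut(Y)$ is then infinite, so $Y$ carries infinitely many $(-2)$-curves. For existence I would exhibit a concrete sextic realizing each configuration: the $L(27)$ type is already constructed in Section \ref{subsec:Lattice-L27}, while for the $M_2$ type I would write an explicit sextic of the form $l_1q_1q_2q_3-f_3^{2}=0$ carrying the required tangencies via the quadric--cubic recipe of Section \ref{subsec:ExamplesS1}, and verify the Picard number and Gram matrix by reduction modulo two suitably chosen primes, in the style of the preceding examples.

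The principal obstacle is the enumeration: one must certify that the two surviving configurations are the only admissible ones and that their lattice vectors come from genuinely effective divisor classes rather than spurious combinations; I expect the bulk of the effort to lie in this step. A secondary difficulty is making the $M_2$ example concrete enough that the reduction-mod-$p$ verification of its Picard number actually succeeds and separates $M_2$ from $L(27)$ via the discriminant class modulo squares.
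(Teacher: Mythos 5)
Your overall strategy -- enumerate the admissible Gram matrices of the eight curves lying over the tritangent line and the three $6$-tangent conics, then identify the lattices they span -- is exactly the paper's, but two of your steps fail as stated. First, the enumeration does \emph{not} leave ``exactly two admissible matrices'': keeping only the rank-$4$ configurations, the paper finds ten admissible parameter tuples falling into \emph{three} isomorphism classes of lattices, namely $L(27)$, the lattice $M_{2}$, and a third lattice isomorphic to $2U\oplus[-2]\oplus[-2]$. Eliminating this third possibility is a genuine step of the proof, and it is not covered by your criterion that ``no further effective $(-2)$-class is unavoidably present''; the exclusion goes the other way: by Vinberg's classification \cite{Vinberg}, a K3 surface with that N\'eron--Severi group contains only six $(-2)$-curves, whereas $Y$ visibly carries the eight irreducible curves $B_{1},\dots,B_{8}$. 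Without this argument the dichotomy of the proposition is not established. (A smaller point: your justification for restricting to rank $4$ -- ``forced as soon as $Y$ has a bounded fundamental domain'' -- is not available, since boundedness is not a hypothesis and the $M_{2}$ surface in fact has an unbounded fundamental domain and elliptic fibrations; what is implicitly assumed, as in the preceding $L(24)$ statement, is that the Picard number of $Y$ equals $4$.)

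Second, your treatment of the final claims is flawed. The class $v=e_{2}+e_{3}+2e_{4}$ is indeed isotropic, but the inference ``elliptic fibration, hence $\aut(Y)$ infinite by Sterk's theorem'' is a non sequitur: Sterk's result \cite{Sterk} is a finiteness statement (finitely many orbits, used in the paper in the direction ``infinitely many $(-2)$-curves $\Rightarrow$ infinite automorphism group''), and elliptic fibrations coexist with finite automorphism groups -- the paper's own twin surfaces with lattices $S_{113}$, $S_{114}$ are examples. The infinitude of the $(-2)$-curves and of $\aut(Y)$ in the $M_{2}$ case comes instead from the fact that this rank-$4$ lattice (discriminant $-52$) is not among Vinberg's $2$-reflective lattices of rank $4$. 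Finally, for existence in the $M_{2}$ case the paper argues lattice-theoretically: on a K3 surface with N\'eron--Severi group $M_{2}$ the class $D=(-1,1,1,0)$ is ample of square $2$ and base-point free, and the listed irreducible $(-2)$-classes of degree $1$ and $2$ exhibit the tritangent line and the three $6$-tangent conics of the branch sextic. Your alternative plan of exhibiting an explicit sextic of the form $l_{1}q_{1}q_{2}q_{3}-f_{3}^{2}=0$ cannot work as written (that polynomial has degree $7$), and you give no mechanism guaranteeing that a concrete example realizes the $M_{2}$ configuration rather than $L(27)$, so the existence claim for the second case remains unproved in your outline.
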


\begin{proof}
The pull-back of the conics splits: $\pi^{*}C_{j}=B_{2j-1}+B_{2j}$
$(j\in\{1,2,3\}$) with $B_{1}B_{2}=B_{3}B_{4}=B_{5}B_{6}=6$. Let
$B_{7},B_{8}$ be the $(-2)$-curves above the line $L_{1}$. As in
Sections \ref{subsec:ExamplesS1} and \ref{subsec:ExamplesS4}, there
exist $a,b,c\in\{0,1,...,4\}$ and $u,v,w\in\{0,1,2\}$ such that
\[
(B_{i}B_{j})_{1\leq i,j\leq8}=\left(\begin{array}{cccccccc}
-2 & 6 & a & \bar{a} & b & \bar{b} & u & \tilde{u}\\
6 & -2 & \bar{a} & a & \bar{b} & b & \tilde{u} & u\\
a & \bar{a} & -2 & 6 & c & \bar{c} & v & \tilde{v}\\
\bar{a} & a & 6 & -2 & \bar{c} & c & \tilde{v} & v\\
b & \bar{b} & c & \bar{c} & -2 & 6 & w & \tilde{w}\\
\bar{b} & b & \bar{c} & c & 6 & -2 & \tilde{w} & w\\
u & \tilde{u} & v & \tilde{v} & w & \tilde{w} & -2 & 3\\
\tilde{u} & u & \tilde{v} & \tilde{v} & \tilde{w} & w & 3 & -2
\end{array}\right),
\]
where $\bar{x}=4-x$ and $\tilde{w}=2-x$. Up to exchanging $B_{3}$
with $B_{4}$, $B_{5}$ with $B_{6}$ and $B_{7}$ with $B_{8}$,
we can suppose $a,b\in\{0,1,2\}$ and $u\in\{0,1\}$. The matrix $(B_{i}B_{j})$
has rank $4$ if and only if $(a,b,c,u,v,w)$ is one of the following
$6$-tuples 
\[
\begin{array}{cc}
T_{1}=(0,0,4,1,1,1), & T_{6}=(2,0,4,0,2,2),\\
T_{2}=(0,0,4,0,1,0), & T_{7}=(0,2,4,0,2,2),\\
T_{3}=(0,0,4,1,2,0), & T_{8}=(0,0,2,0,2,2),\\
T_{4}=(0,0,4,1,0,2), & T_{9}=(0,2,0,0,2,0),\\
T_{5}=(0,0,4,0,0,1), & T_{10}=(2,0,0,0,0,2)
\end{array}
\]
The first case is the lattice $L(27)$. The lattices corresponding
to cases $T_{2},\dots,T_{5}$ are isometric and lead to a K3 surface
with a Néron-Severi lattice of rank $4$ and intersection matrix $M_{2}$
in some base. For that base, the class $D=(-1,1,1,0)$ is ample and
satisfies $D^{2}=2$; the classes $(0,0,0,1)$, $(-1,1,1,-1)$ are
the classes of irreducible $(-2)$-curves of degree $1$ for $D$.
The classes 
\[
\begin{array}{c}
(1,0,0,0),\,\,(0,1,0,0),\,\,(0,0,1,0),\\
(-3,2,2,0),\,(-2,1,2,0),\,(-2,2,1,0)
\end{array}
\]
are the classes of irreducible $(-2)$-curves of degree $2$ for $D$.
Suppose that $|D|$ has some base points. Then there exists an elliptic
curve $E$ and $(-2)$-curve $\G$ such that $D=2E+\G$ and $\G E=1$.
But that would imply $D\G=0$, which is impossible since $D$ is ample,
thus $|D|$ is base point free and it defines a double cover of $\PP^{2}$
branched over a smooth sextic curve which has a tritangent line and
three $6$-tangent conics.\\
The cases $T_{5},\dots,T_{10}$ lead to the same rank $4$ lattice
with intersection matrix
\[
\left(\begin{array}{cccc}
-2 & 0 & 0 & 0\\
0 & -2 & 2 & 2\\
0 & 2 & -2 & 0\\
0 & 2 & 0 & -2
\end{array}\right),
\]
in some base. This is the Néron-Severi lattice of a K3 surface which
contains only six $(-2)$-curves (case $2U\oplus[-2]\oplus[-2]$ in
\cite{Vinberg}), but this cannot happen in our situation, since the
surface $Y$ contains at least $8$ $(-2)$-curves.
\end{proof}
Let us give one example of surface of type $L(24)$:
\begin{example}
Let $q_{1},q_{2},q_{3}$ and $f$ be the forms
\[
\begin{array}{c}
q_{1}=2x^{2}+xy+2y^{2}-3z^{2},\,\,q_{2}:=3x^{2}+8xy+10y^{2}+11yz\\
q_{3}=-25/3x^{2}-xy-3y^{2}-2xz-yz+5z^{2},\hfill\\
f=2x^{3}-3x^{2}y-3x^{2}z-xyz+y^{2}z-3xz^{2}-3yz^{2}.\hfill
\end{array}
\]
The conics $\{q_{i}=0\},\,i\in\{1,2,3\}$ are $6$-tangent to the
smooth sextic curve $C_{6}=\{q_{1}q_{2}q_{2}-f^{2}=0\}$, moreover
the line $L=\{y=0\}$ is tritangent to $C_{6}$ at points $(-1:0:1),\,(0:0:1),\,(1:0:1)$.
Let $Y\to\PP^{2}$ be the double cover branched over $C_{6}$. One
can check that the reduction $Y_{19}$ modulo $19$ of $Y$ has Picard
number $4$, which is also the geometric Picard number. Using the
Artin-Tate conjecture and Magma \cite{magma} as in Proposition \ref{prop:The-surface-3conics},
one obtains that the discriminant of the Néron-Severi lattice of $Y_{19}$
is $-60$ (and the Brauer group is trivial), thus the surface $Y$
is an example of surface of type $L(27)$ (note that the discriminant
of the lattice $L(27)$ is $-60$ and the discriminant of the lattice
associated to the matrix $M_{2}$ equals $-52$). 

\end{example}

\section{Appendix}

For the convenience of the reader, we give a list of Gram matrices
of the Néron-Severi lattices of the surfaces we consider, with their
number of $(-2)$-curves, their automorphism group and their discriminant
group:

\begin{tabular}{|c|c|c|c|c|}
\hline 
 & $\NS X)$ & $\#$ of $(-2)$-curves & $\aut(X)$ & $\Disc X)$\tabularnewline
\hline 
$S_{1}$ & $\left(\begin{array}{ccc}
6 & 0 & 0\\
0 & -2 & 0\\
0 & 0 & -2
\end{array}\right)$ & $6$ & $\ZZ/2\ZZ$ & $(\ZZ/2\ZZ)^{2}\times\ZZ/6\ZZ$\tabularnewline
\hline 
$S_{2}$ & $\left(\begin{array}{ccc}
36 & 0 & 0\\
0 & -2 & 1\\
0 & 1 & -2
\end{array}\right)$ & $6$ & $1$ & $\ZZ/3\ZZ\times\ZZ/36\ZZ$\tabularnewline
\hline 
$S_{3}$ & $\left(\begin{array}{ccc}
12 & 0 & 0\\
0 & -2 & 1\\
0 & 1 & -2
\end{array}\right)$ & $4$ & $1$ & $\ZZ/3\ZZ\times\ZZ/12\ZZ$\tabularnewline
\hline 
$S_{4}$ & $\left(\begin{array}{ccc}
2 & 1 & 2\\
1 & -2 & 1\\
2 & 1 & -2
\end{array}\right)$ & $4$ & $\ZZ/2\ZZ$ & $\ZZ/20\ZZ$\tabularnewline
\hline 
$S_{5}$ & $\left(\begin{array}{ccc}
4 & 0 & 0\\
0 & -2 & 1\\
0 & 1 & -2
\end{array}\right)$ & $4$ & $\ZZ/2\ZZ$ & $\ZZ/12\ZZ$\tabularnewline
\hline 
$S_{6}$ & $\left(\begin{array}{ccc}
2 & 2 & 3\\
2 & -2 & 1\\
3 & 1 & -2
\end{array}\right)$ & $6$ & $\ZZ/2\ZZ$ & $\ZZ/44\ZZ$\tabularnewline
\hline 
$L_{24}$ & $\left(\begin{array}{cccc}
2 & 1 & 1 & 1\\
1 & -2 & 0 & 0\\
1 & 0 & -2 & 0\\
1 & 0 & 0 & -2
\end{array}\right)$ & $6$ & $\ZZ/2\ZZ$ & $\ZZ/2\ZZ\times\ZZ/14\ZZ$\tabularnewline
\hline 
$L_{27}$ & $\left(\begin{array}{cccc}
12 & 2 & 0 & 0\\
2 & -2 & 1 & 0\\
0 & 1 & -2 & 1\\
0 & 0 & 1 & -2
\end{array}\right)$ & $8$ & $\ZZ/2\ZZ$ & $\ZZ/2\ZZ\times\ZZ/30\ZZ$\tabularnewline
\hline 
\end{tabular}

\newpage

\vspace{5mm}
\noindent Xavier Roulleau,
\\Aix-Marseille Universit\'e, CNRS, Centrale Marseille,
\\I2M UMR 7373,  
\\13453 Marseille,
\\France
\\ \email{Xavier.Roulleau@univ-amu.fr}
\vspace{0.1cm}

{\small{}http://www.i2m.univ-amu.fr/perso/xavier.roulleau/Site\_Pro/Bienvenue.html}{\small\par}
\end{document}